\documentclass[twoside]{article}[11pt]

\usepackage{amssymb,amsmath}
\usepackage{amsthm}
\usepackage[colorlinks,linkcolor=violet,citecolor=violet]{hyperref}
\usepackage[nameinlink]{cleveref}
\usepackage{float}
\usepackage{fullpage}
\usepackage[dvipsnames]{xcolor}
\usepackage{fancyhdr,datetime}
\usepackage{graphicx}
\usepackage{subcaption}
\usepackage{tikz,pgf,tikz-3dplot}
\usepackage{tikz-cd}
\usepackage{rotating}
\usepackage{etoolbox}
\usepackage{dynkin-diagrams}
\usepackage{mathrsfs}
\usepackage[percent]{overpic}
\usepackage[export]{adjustbox}
\usepackage{stmaryrd}

\setlength{\parindent}{0pt}
\setlength{\parskip}{3pt}
\renewcommand\footnoterule{\kern-3pt \hrule width \textwidth \kern 2.6pt}


\let\oldsection\section 
\renewcommand{\section}{
  \renewcommand{\theequation}{\thesection.\arabic{equation}}
  \oldsection}


\newcommand{\mexp}[1]{\ensuremath{\exp(-2\pi \mathrm{i}\,\sprod{ #1 })}}


\newcommand{\N}{\mathbb{N}}

\newcommand\Z{\mathbb{Z}}
\newcommand\C{\mathbb{C}}
\newcommand\Q{\mathbb{Q}}
\newcommand\R{\mathbb{R}}




\newcommand{\Rt}{\ensuremath{\R[x]}}

\newcommand{\RX}{\ensuremath{\Q[z]}}
\newcommand{\QX}{\ensuremath{\Q[z]}}

\newcommand{\Rx}{\ensuremath{\Q[x^{\pm}]}}
\newcommand{\Qx}{\ensuremath{\Q[x^{\pm}]}}



\newcommand{\nops}[1]{\ensuremath{\vert #1  \vert}}


\newcommand{\coeff}{\ensuremath{\mathrm{Coeff}}} 
\newcommand{\rank}{\ensuremath{\mathrm{Rank}}} 
\newcommand{\trace}{\ensuremath{\mathrm{Tr}}} 
\renewcommand{\det}{\ensuremath{\mathrm{Det}}} 
\newcommand{\tbox}[1]{\ensuremath{\quad \mbox{#1} \quad}} 



\newcommand{\gva}{\ensuremath{\mathcal{G}}}

\newcommand{\RootA}[1][n-1]{\ensuremath{\mathrm{A}_{#1}}}
\newcommand{\RootB}[1][n]{\ensuremath{\mathrm{B}_{#1}}}
\newcommand{\RootC}[1][n]{\ensuremath{\mathrm{C}_{#1}}}
\newcommand{\RootD}[1][n]{\ensuremath{\mathrm{D}_{#1}}}
\newcommand{\RootE}[1][n]{\ensuremath{\mathrm{E}_{#1}}}
\newcommand{\RootF}[1][n]{\ensuremath{\mathrm{F}_{#1}}}
\newcommand{\RootG}[1][n]{\ensuremath{\mathrm{G}_{#1}}}

\newcommand{\weyl}{\ensuremath{\mathcal{W}}} 


\newcommand{\roots}{\ensuremath{\rho}}
\newcommand{\Roots}{\ensuremath{\mathrm{R}}}
\newcommand{\Base}{\ensuremath{\mathrm{B}}}

\newcommand{\sprod}[1]{\ensuremath{\langle#1\rangle}} 

\newcommand{\fweight}[1]{\ensuremath{\omega_{#1}}}	
\newcommand{\weight}{\ensuremath{\mu}} 
\newcommand{\Weights}{\Omega} 

\newcommand{\orb}[1]{\ensuremath{{\theta}_{#1}}} 
\newcommand{\bigorb}[1]{\ensuremath{{\Theta}_{#1}}} 


\newcommand{\TT}{\ensuremath{\conj{T}}} 
\newcommand{\conj}[1]{\ensuremath{\widehat{#1}}}

\newcommand{\TTorus}{\mathbb{T}}		
\newcommand{\TToruss}{\mathbb{T}_1}	

\newcommand{\bigcos}{\ensuremath{\vartheta}} 
\newcommand{\Image}{\ensuremath{\mathcal{T}}} 


\def\row#1/#2!{#1_{\IfStrEq{#2}{}{n-1}{#2}} & \dynkin{#1}{#2}\\}



\newtheorem{lemma}{Lemma}[section]
\newtheorem{example}[lemma]{Example}

\newtheorem{definition}[lemma]{Definition}
\newtheorem{proposition}[lemma]{Proposition}
\newtheorem{remark}[lemma]{Remark}
\newtheorem{theorem}[lemma]{Theorem}

\newtheorem{corollary}[lemma]{Corollary}

{\scshape}{\slshape}

\newcommand{\italgf}{\slshape  }

\title{	Orbit spaces of Weyl groups acting on compact tori:\\
		a unified and explicit polynomial description}

\author{	
	Evelyne Hubert\thanks{Inria d'Universit\'{e} C\^{o}te d'Azur} , 
	Tobias Metzlaff\footnotemark[1] \thanks{RPTU Kaiserslautern--Landau} , 
	Cordian Riener\thanks{UiT The Arctic University}
}

\newcommand{\Addresses}{{
		\bigskip
		\footnotesize
		
		E.~Hubert, \textsc{Centre Inria d'Universit\'{e} C\^{o}te d'Azur, 06902 Sophia Antipolis, France}\par\nopagebreak
		\textit{E-mail address}: \texttt{evelyne.hubert@inria.fr}\par\nopagebreak
		\textit{ORCID}: \texttt{0000-0003-1456-9524}
		
		\medskip
		
		T.~Metzlaff, \textsc{Department of Mathematics, RPTU Kaiserslautern--Landau, 67663 Kaiserslautern, Germany}\par\nopagebreak
		\textit{Former Affiliation}: \textsc{Centre Inria d'Universit\'{e} C\^{o}te d'Azur, 06902 Sophia Antipolis, France}\par\nopagebreak
		\textit{E-mail address}: \texttt{tobias.metzlaff@rptu.de}\par\nopagebreak
		\textit{ORCID}: \texttt{0000-0002-0688-7074}
		
		\medskip
		
		C.~Riener, \textsc{Department of Mathematics, UiT The Arctic University, 9037 Troms\o, Norway}\par\nopagebreak
		\textit{E-mail address}: \texttt{cordian.riener@uit.no}\par\nopagebreak
		\textit{ORCID}: \texttt{0000-0002-1192-3500}
		
}}

\pagestyle{fancy}
\lhead[\thepage]{Orbit spaces of Weyl groups acting on compact tori}
\rhead[E. Hubert, T. Metzlaff, C. Riener]{\thepage}
\cfoot{}
\rfoot[]{}
\lfoot[]{\url{}}

\setcounter{tocdepth}{2}

\begin{document}

\maketitle\thispagestyle{empty}

\begin{abstract}
The Weyl group of a crystallographic root system has a nonlinear action on the compact torus. 
The orbit space of this action is a compact basic semi--algebraic set. 
We present a polynomial description of this set for the Weyl groups of type $\mathrm{A}$, $\mathrm{B}$, $\mathrm{C}$, $\mathrm{D}$ and $\mathrm{G}$. 
Our description is given through a polynomial matrix inequality. 
The novelty lies in an approach via Hermite quadratic forms and a closed formula for the matrix entries. 

The orbit space of the nonlinear Weyl group action is the orthogonality region of generalized Chebyshev polynomials. 
In this polynomial basis, we show that the matrices obtained for the five types follow the same, surprisingly simple pattern. 
This is applied to the optimization of trigonometric polynomials with crystallographic symmetries. 
~\\
~\\
Partial results of this article were presented as a poster at the ISSAC 2022 conference \cite{chromaticissac22} and in the doctoral thesis \cite{TobiasThesis}. 
~\\
~\\
\textbf{Keywords}: Multiplicative Invariants, Weyl Groups, Root Systems, Orbit Spaces, Semi--Algebraic Sets, Polynomial Matrix Inequalities, Chebyshev Polynomials
~\\
~\\
\textbf{MSC}: 13A50 14P10 17B22 33C52
\clearpage
\end{abstract}


\tableofcontents


\Addresses

\clearpage

\section{Introduction}
\label{section_introduction}
\setcounter{equation}{0}


For a finite group acting on an affine variety, the set of all orbits is called orbit space. 
The coordinate ring contains the finitely generated subring of invariants. 
The image of the variety under fundamental invariants admits a parametrization of the orbit space. 
Being able to describe this image allows for numerous applications in differential geometry \cite{dubrovin1993}, dynamical systems \cite{gatermann2000}, polynomial optimization \cite{riener2013} and quantum systems \cite{abud83,gufan01,sartori05}. 

When the group acts linearly as, for instance, a matrix group on a real vector space, explicit descriptions of the orbit space are available for some groups. 
Specific examples are the symmetric group $\mathfrak{S}_n$ \cite{procesi78}, the Weyl groups $\mathfrak{S}_n\ltimes \{\pm 1\}^n$ and $\mathfrak{S}_n\ltimes \{\pm 1\}^{n}_+$ \cite{talamini20}, some exceptional groups \cite{talamini10} and low--dimensional groups \cite{sartori96}.
Recent advances in the area of Fourier analysis and optimization have lead to the study of Weyl groups acting nonlinearly on the compact torus. 
For all the infinite Weyl group families and one exceptional case, this article offers a unified and explicit description as a compact basic semi--algebraic set for the orbit space of this action. 


In discrete Fourier analysis, the orbit space of the Weyl group acting on the compact torus is the region of orthogonality for several families of generalized Chebyshev polynomials \cite{HoffmanWithers}. 
These polynomials have been widely studied since the works of Koornwinder \cite{KOORNWINDER1974}, MacDonald \cite{MacDonald1990} and beyond \cite{DunnLidl1980,EierLidl1982,beerends91}. 
In some cases, one can show that the roots of these polynomials lie in the orbit space and provide nodes for Gaussian cubature formulae due to Xu et al. \cite{Xu08,Xu09,Moody2011,Xu12,Xu15,hakova16}. 
The roots are also suitable sampling points \cite{MuntheKaas2012} and working in the Chebyshev basis brings advantages in interpolation \cite{HubertSinger2020}. 

Moreover, being able to describe the orbit space of a nonlinear Weyl group action allows for symmetry reductions in trigonometric optimization. 
The authors have recently applied the present result in \cite{chromatic22}, where an invariant trigonometric polynomial is rewritten as a linear combination of Chebyshev polynomials on the orbit space and then minimized as a classical polynomial on a semi--algebraic set. 
Such optimization problems arise, for example, in the computation of chromatic numbers \cite{BdCOV}. 

These applications rely on the invariant--theoretic results of Bourbaki \cite{bourbaki456}, Steinberg \cite{steinberg75} and Farkas \cite{farkas84}, which are presented in Lorenz' book \cite{lorenz06}. 


We briefly explain the setup (see also \Cref{preliminaries1} for the details). 
A Weyl group can be seen as a finite Euclidean reflection group that leaves a full--dimensional lattice invariant \cite{kane13}.
Such a group has a representation over the integers $\gva\subseteq \mathrm{GL}_n(\Z)$ by a change of basis stemming from the lattice generators. 
Thus, we have a nonlinear group action on the algebraic torus
\begin{equation}\label{eq_IntroNonlinearAction}
	\gva \times (\C^*)^n\to (\C^*)^n,\quad (B,x) \mapsto (x^{B^{-1}_{\cdot 1}},\ldots,x^{B^{-1}_{\cdot n}}) .
\end{equation}
The action leaves the compact torus $\TTorus^n\subseteq(\C^*)^n$ invariant. 
The object of interest in the above applications is the orbit space $\TTorus^n/\gva$. 
This set can be embedded in $\R^n$ as a compact basic semi--algebraic one, 
see for example \Cref{fig_OrbitSpaceIntro}, 
and is parameterized as follows: 
The nonlinear action of $\gva$ on $(\C^*)^n$ induces an action on the ring of Laurent polynomials $\Rx$ in $n$ variables. 
The invariant subring $\Rx^\gva$ is generated by algebraically independent fundamental invariants $\orb{1},\ldots,\orb{n}$ \cite[Chapitre VI]{bourbaki456}. 
A canonical choice for the $\orb{i}$ is to take orbit sums, where one may assume without loss of generality that the $\orb{i}$ only take on real values on $\TTorus^n$. 
Then the image of
\begin{equation}\label{eq_CanonicalProjection}
	\TTorus^n \to \R^n , \quad x\mapsto (\orb{1}(x),\ldots,\orb{n}(x)),
\end{equation}
is a compact subset of $\R^n$ and every image point corresponds in a $1:1$ manner to an orbit in $\TTorus^n/\gva$. 
We call this image the real $\TTorus$--orbit space of $\gva$. 
The goal is to find polynomial inequalities that describe it as a basic semi--algebraic set.


This article gives such a polynomial description of the $\TTorus$--orbit space for all four infinite Weyl group families $\RootA$, $\RootB$, $\RootC$ and $\RootD$ that leave the associated weight lattice invariant \cite[Planches I -- IV]{bourbaki456}. 
Furthermore, there are five exceptional cases $\RootE[6,7,8]$, $\RootF[4]$, $\RootG[2]$ and our result applies precisely to $\RootG[2]$ \cite[Planche IX]{bourbaki456}.
In those cases, the Weyl group is $\gva\cong\mathfrak{S}_n \ltimes \{\pm 1\}^k$, where $\mathfrak{S}_n$ is the symmetric group, $\ltimes$ denotes the semi--direct product and $0\leq k\leq n$ is an integer. 

We construct matrices $H$ with entries in $\QX$ that satisfy the following property: 
For $z\in\R^n$, the equation $z = (\orb{1}(x),\ldots,\orb{n}(x))$ has a solution $x\in\TTorus^n$ if and only if $H(z)$ is positive semi--definite. 
The matrices $H$ for all five cases are given through formulae that only depend on the dimension $n$. 



While closed expressions for the $\orb{i}$ are known for certain groups and lattices, see \cite{lorenz06,hamm2014}, the concrete geometry of the $\TTorus$--orbit space is mostly unexplored: 
To obtain a characterization, one could follow the general approach of Procesi and Schwarz for compact Lie groups acting on varieties \cite[\S 4]{procesischwarz85}. 
An orbit space is described there by the Gram matrix of the differentials that generate the $\gva$--equivariant maps. 
This matrix is symmetric and subsequently has up to $n\,(n+1)/2$ distinct entries. 
To be of use, the entries need to be rewritten in the fundamental invariants, that is, in the coordinates of the orbit space. 
The effort to conduct this rewriting step grows with $n$ and creates a bottleneck in computations. 
The matrices $H$ we obtain in this article have the remarkable advantage to come directly in terms of the generating invariants without additional computations necessary. 
As they have Hankel structure, they only have up to $2\,n-1$ distinct entries. 
Furthermore, we obtain sharp bounds for the degree and the number of basis elements per entry in the Chebyshev basis. 


In \Cref{HermiteCharacterizationAn,HermiteCharacterizationCn,HermiteCharacterizationBn,HermiteCharacterizationDn,HermiteCharacterizationGn}, we give the entries for the matrices $H$ in the coordinates $z$ of the $\TTorus$--orbit space, that is, in the standard monomial basis $\{z^\alpha\,\vert\,\alpha\in\N^n\}$ of $\RX$. The definition of the generalized Chebyshev polynomials, which we denote by $\TT_\alpha$, allows us to unify the formula for the five families. 
In the orthogonal basis $\{\TT_\alpha\,\vert\,\alpha\in\N^n\}$ of $\RX$, we show that the matrices $H$ are always identical and follow a remarkable pattern. 

\begin{theorem}\label{MainThmIntro}
Let $\gva$ be a Weyl group of type $\RootA$, $\RootB$, $\RootC$, $\RootD$ or $\RootG[2]$. 
A real point $z$ is contained in the real $\TTorus$--orbit space of $\gva$ if and only if $H(z)\succeq 0$, 
where $H\in\RX^{n\times n}$ is the matrix with Hankel structure\footnote{The entry $(i,j)$ only depends on $i+j$.\\\phantom{iiiiiii}If $\Roots$ is of type $\RootA$ and $n\geq 3$, then $z\in\R^{n-1}$ and $H$ is $n\times n$, because $\Roots$ has rank $n-1$ while $\gva\cong\mathfrak{S}_n$ (similar for $\RootG[2]$).}
	\[
	H =
	\begin{bmatrix}
		\frac{\TT_{0}-\TT_{2\,e_{1}}}{4}& 
		\frac{\TT_{e_{1}} -\TT_{3\,e_{1}}}{8}& 
		\frac{\TT_{0}- \TT_{4\,e_{1}}}{16}&
		\frac{2\TT_{e_{1}}- \TT_{3\,e_{1}} - \TT_{5\,e_{1}}}{32}&
		\cdots\\
		
		\frac{\TT_{e_{1}} -\TT_{3\,e_{1}}}{8}& 
		\frac{\TT_{0}- \TT_{4\,e_{1}}}{16}&
		\frac{2 \TT_{e_{1}}- \TT_{3\,e_{1}} - \TT_{5\,e_{1}}}{32}&
		\frac{2 \TT_{0} +  \TT_{2\,e_{1}}-2 \TT_{4\,e_{1}} -  \TT_{6\,e_{1}}}{64}&
		\cdots\\
		
		\frac{\TT_{0}- \TT_{4\,e_{1}}}{16}& 
		\frac{2\TT_{e_{1}}- \TT_{3\,e_{1}} - \TT_{5\,e_{1}}}{32}& 
		\frac{2 \TT_{0} +  \TT_{2\,e_{1}}-2 \TT_{4\,e_{1}} -  \TT_{6\,e_{1}}}{64}&
		\frac{5 \TT_{e_1} - \TT_{3\,e_1} - 3 \TT_{5\,e_1} - \TT_{7\,e_1}}{128}&
		\cdots\\
		
		\frac{2\TT_{e_{1}}- \TT_{3\,e_{1}} - \TT_{5\,e_{1}}}{32}&
		\frac{2 \TT_{0} +  \TT_{2\,e_{1}}-2 \TT_{4\,e_{1}} -  \TT_{6\,e_{1}}}{64}&
		\frac{5 \TT_{e_1} - \TT_{3\,e_1} - 3 \TT_{5\,e_1} - \TT_{7\,e_1}}{128}&
		\frac{5 \TT_{0} + 4 \TT_{2\,e_1} - 4 \TT_{4\,e_1} - 4 \TT_{6\,e_1} - \TT_{8\,e_1}}{256}&
		\cdots\\
		
		\vdots & \vdots & \vdots & \vdots & \ddots
	\end{bmatrix}.
	\]
$($We give the proof and the closed formula for the entries in \emph{\Cref{MainThm}}.$)$
\end{theorem}

\begin{figure}
\begin{center}
	\begin{subfigure}{.2\textwidth}
		\centering
		\includegraphics[width=\textwidth]{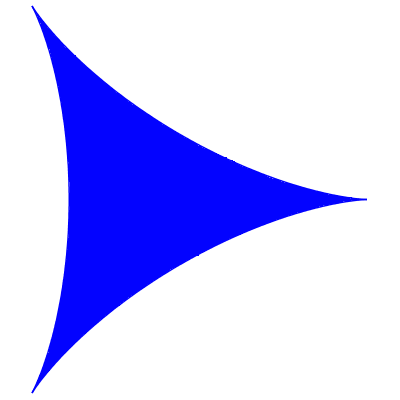}
		\caption{$\RootA[2]$}
	\end{subfigure}\quad
	\begin{subfigure}{.2\textwidth}
		\centering
		\includegraphics[width=\textwidth]{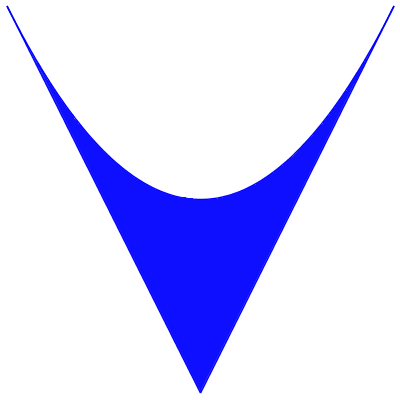}
		\caption{$\RootC[2]$}
	\end{subfigure}\quad
	\begin{subfigure}{.2\textwidth}
		\centering
		\includegraphics[width=\textwidth]{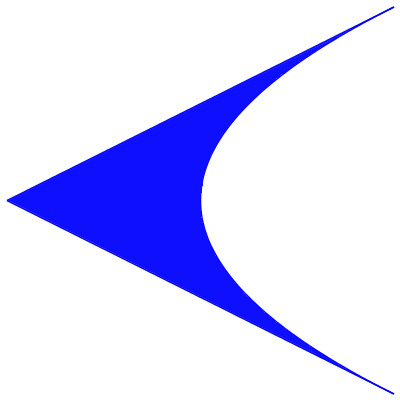}
		\caption{$\RootB[2]$}
	\end{subfigure}\quad
	\begin{subfigure}{.2\textwidth}
		\centering
		\includegraphics[width=\textwidth]{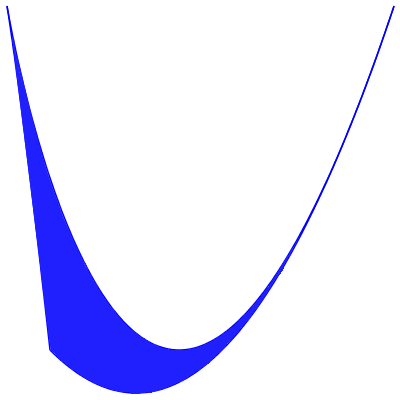}
		\caption{$\RootG[2]$}
	\end{subfigure}\quad
	\begin{subfigure}{.3\textwidth}
		\centering
		\includegraphics[width=\textwidth]{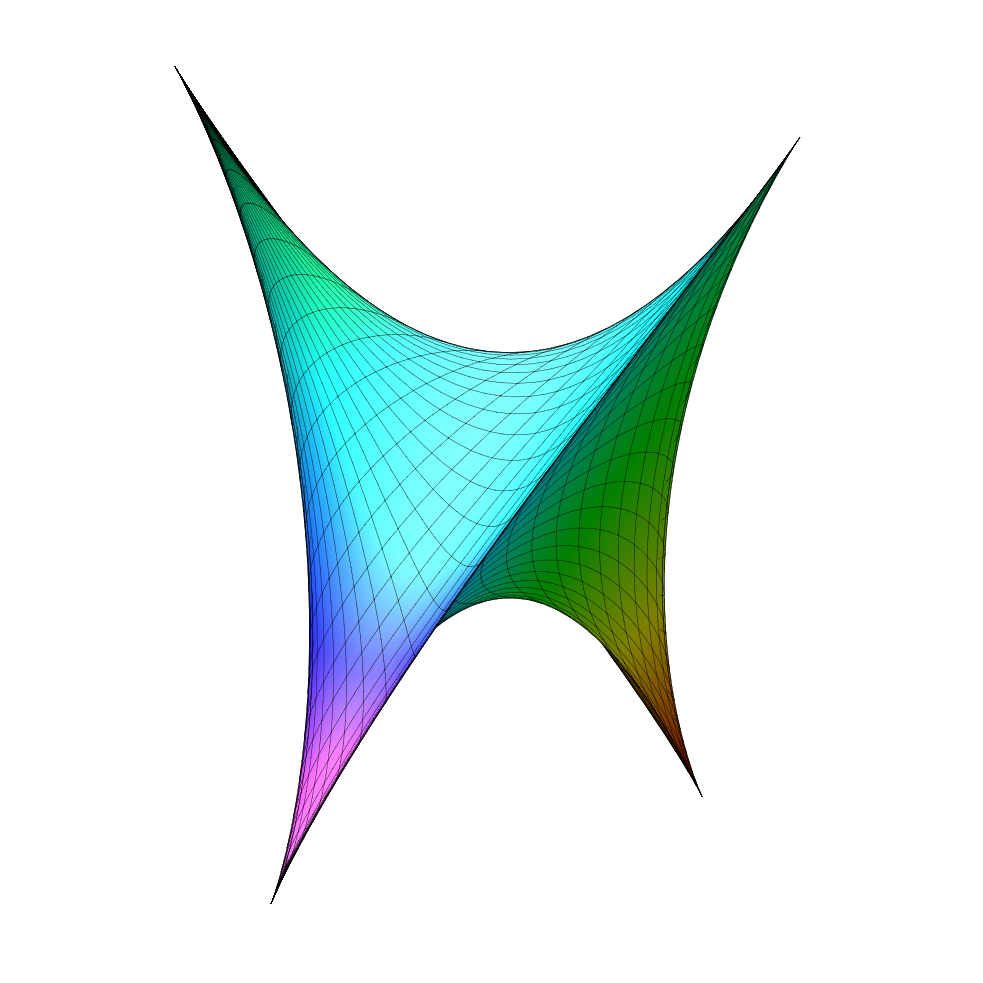}
		\caption{$\RootA[3]$}
	\end{subfigure}\quad
	\begin{subfigure}{.3\textwidth}
		\centering
		\includegraphics[width=\textwidth]{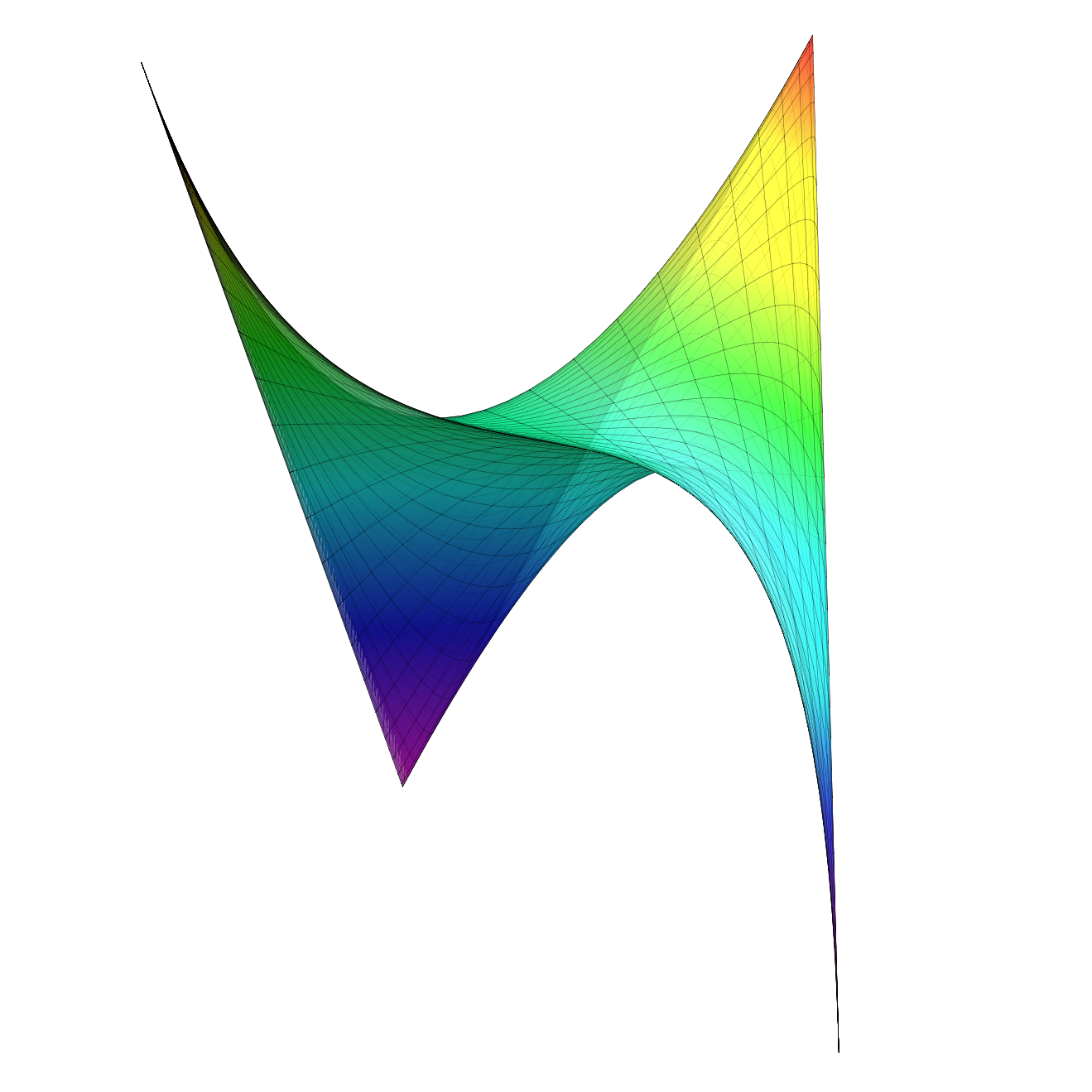}
		\caption{$\RootC[3]$}
	\end{subfigure}\quad
	\begin{subfigure}{.3\textwidth}
		\centering
		\includegraphics[width=\textwidth]{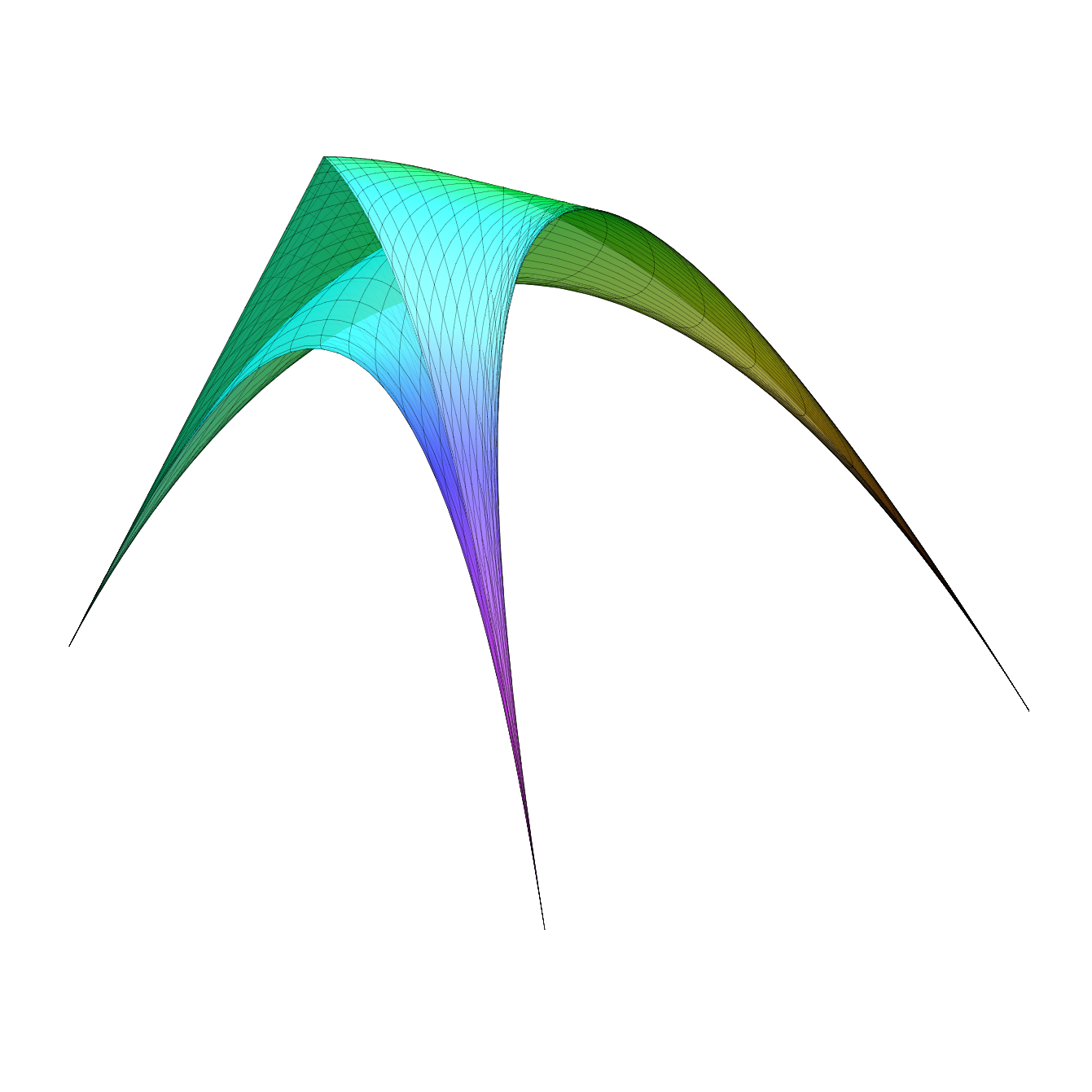}
		\caption{$\RootB[3]$}
	\end{subfigure}
	\caption{
		The real $\TTorus$--orbit space for the irreducible root systems of rank $2$ and $3$.
	}\label{fig_OrbitSpaceIntro}
\end{center}
\end{figure}

The simplest instance is the univariate case $n=1$. 
Then $\gva = \{ \pm I_1\} \subseteq \mathrm{GL}_1(\Z)$ has $\TTorus$--orbit space $[-1,1]\subseteq \R$. 
Indeed, any element of $\Q[x,x^{-1}]^\gva$ can be written uniquely as a polynomial in $(x+x^{-1})/2$ and the image of $\TTorus$ under $x\mapsto(x+x^{-1})/2$ is $[-1,1]$.
To obtain a polynomial description, we consider the matrix
\[
H(z)
=	\begin{bmatrix}
\frac{\TT_{0}(z) - \TT_{2}(z)}{4}
\end{bmatrix}
=	\begin{bmatrix}
\frac{1 - (2\,z^2 - 1)}{4}
\end{bmatrix}
=	\begin{bmatrix}
\frac{1 - z^2}{2}
\end{bmatrix}
\in	\RX^{1\times 1},
\]
where $\TT_\ell$ is the usual univariate Chebyshev polynomial of the first kind with $\TT_\ell \left((x+x^{-1})/2\right) = (x^\ell+x^{-\ell})/2$ for $\ell\in\N$. 
Hence, we have $H(z)\succeq 0$ if and only if $1\geq z^2$, that is, if $z \in [-1,1]$. This is the domain on which the $\TT_\ell$ are orthogonal with respect to the weight function $(2\,\det(H(z)))^{-1/2} = 1/\sqrt{1-z^2}$. 
For $n\geq 2$, some popular examples of $\TTorus$--orbit spaces are depicted in \Cref{fig_OrbitSpaceIntro}. 

We explain the idea of the proof for $\RootC$, the other cases are very similar: 
The Weyl group of $\RootC$ is the semi--direct product $\gva\cong \mathfrak{S}_n \ltimes \{\pm 1\}^n$. 
In particular, the ring of invariants can be written as $\Qx^\gva \cong \Q[(y+y^{-1})/2]^{\mathfrak{S}_n}$. 
Here, the entries of the vectors $y$ and $y^{-1}$ (the inverse applies entry--wise) are the $2\,n$ monomials appearing in the first fundamental invariant $\orb{1}$. 
The group $\mathfrak{S}_n$ acts by permuting the $(y_i+y_i^{-1})/2$. 
Hence, the problem reduces to rewriting the $\mathfrak{S}_n$--invariants, that is, the elementary symmetric polynomials $\sigma_i((y+y^{-1})/2)$, in terms of the $\gva$--invariants $\orb{i}(x)$. 
The case $\RootC$ is the easiest, because we immediately have
\begin{equation}\label{eq_ESPInvariant}
	\sigma_i\left(\frac{y(x)+y(x)^{-1}}{2}\right) = c_i\,\orb{i}(x),
\end{equation}
where $c_i \in \Q$ is a constant. 
Hence, when $z\in\C^n$ with $z_i=\orb{i}(x)$ for some $x\in(\C^*)^n$, then the $c_i\,z_i$ are the coefficients of a polynomial with root vector $(y+y^{-1})/2$. 
We show that $z$ is contained in the $\TTorus$--orbit space if and only if $y(x)\in\TTorus^n$, that is, if and only if all the roots are contained in the interval $[-1,1]$. 
A necessary and sufficient criterion for the latter is that the associated Hermite quadratic form is positive semi--definite. 
We compute the matrix $H$ of this form and express its entries as polynomials in $z_i$. 

The contents of this article are as follows. 
In \Cref{preliminaries1}, we introduce the required definitions and notations. 
This section contains mostly standard material from the theory of root systems and lattices \cite{bourbaki456}. 
The following \Cref{section:symmetricpolynomialsandroots} is our toolbox for characterizing when all solutions of a symmetric system such as \Cref{eq_ESPInvariant} are contained in the compact torus. 
These tools are based on the material in \cite{cox05,cox13} and then used in
\Cref{section_proofA,section_proofC,section_proofB,section_proofD,section_proofG}, 
to give the formula from \Cref{MainThmIntro} in the basis of standard monomials. 
Finally, we prove \Cref{MainThmIntro} in \Cref{section_proofMain} by using the definition of the generalized Chebyshev polynomials.

The description of $\TTorus^n/\gva$ we offer is by no means unique as it depends on the choice of invariants $\orb{i}$. 
However, since the ring of invariants is a polynomials algebra, there exist unique polynomial mappings between different invariants, say $\orb{i}=\pi_i(\tilde{\theta}_1,\ldots,\tilde{\theta}_n)$. 
Hence, if $H(z)\succeq 0$ describes the $\TTorus$--orbit space defined by the $\orb{i}$, then $H(\pi(z))\succeq 0$ describes the one defined by the $\tilde{\theta}_i$. 
Alternatively, one can express the left--hand side of \Cref{eq_ESPInvariant} in terms of the $\tilde{\theta}_i$ to get the matrix immediately. 

To easily access the matrices, we offer a Maple package \textsc{GeneralizedChebyshev}\footnote{The package is available under \href{https://github.com/TobiasMetzlaff/GeneralizedChebyshev}{https://github.com/TobiasMetzlaff/GeneralizedChebyshev}.}: 
The output of
\begin{itemize}
	\item \textcolor{blue}{RHermiteMatrix$(\Roots,n)$} is the matrix $H$ in the standard monomials $z^\alpha$,
	\item \textcolor{blue}{RTHermiteMatrix$(\Roots,n)$} is the matrix $H$ in the Chebyshev polynomials $\TT_\alpha$,
\end{itemize}
where $\Roots$ is the type $\mathrm{A},\mathrm{B},\mathrm{C},\mathrm{D}$ or $\mathrm{G}$ and $n\in\N$ is the dimension. 
All figures in this article were produced with this package and the plotting tools of Maple.

\section{The $\TTorus$--orbit space of a Weyl group}
\label{preliminaries1}
\setcounter{equation}{0}

Based on \cite{bourbaki456}, we revisit the definition of a crystallographic root system and how it admits a Weyl group with an invariant, full--dimensional lattice. 
From \cite{lorenz06}, we then recall actions of Weyl groups on tori and define the orbit space. 
A key theorem, proven in \cite{bourbaki456}, is that the ring of invariants for such actions is a polynomial algebra. 
This brings the definition of generalized Chebyshev polynomials. 
Finally, we prove that the orbit space is always as a real compact set. 

\subsection{Root systems and Weyl groups}

We take the definitions from \cite[Ch. VI]{bourbaki456}. For a finite--dimensional real vector space $V$ with inner product $\sprod{\cdot , \cdot }$, a subset $\Roots\subseteq V$ is called a \textbf{root system} in $V$, if the following conditions hold.
\begin{enumerate}
	\item[R1] $\Roots$ is finite, spans $V$ and does not contain $0$.
	\item[R2] If $\rho, \tilde{\rho} \in \Roots$, then $\langle\tilde{\rho},\rho^\vee\rangle \in \Z$, where $\rho^\vee:=\frac{2\,\rho}{\langle\rho,\rho\rangle}$.
	\item[R3] If $\rho, \tilde{\rho} \in \Roots$, then $s_\rho(\tilde{\rho}) \in \Roots$, where $s_\rho$ is the reflection $ s_\rho(u) := u - \langle u,\rho^\vee\rangle \rho$ for $u \in  V$.
\end{enumerate}
The elements of $\Roots$ are called \textbf{roots} and the \textbf{rank} of $\Roots$ is $\rank(\Roots):=\dim(V)$. The \textbf{Weyl group} $\weyl$ of $\Roots$ is the group generated by the reflections $s_\roots$ for $\roots \in \Roots$. This is a finite subgroup of the orthogonal group on $V$ with respect to the inner product $\sprod{\cdot , \cdot }$. Furthermore, $\Roots$ is said to be \textbf{reduced}, if the following holds.
\begin{enumerate}
	\item[R4] For $\rho \in \Roots$ and $c \in \R$, we have $c\rho \in \Roots$ if and only if $c = \pm 1$.$\phantom{\frac{1}{2}}$
\end{enumerate}
We assume that the ``reduced'' property R4 always holds when we speak of a ``root system''. 
An example is given in \Cref{example_multiplicativeaction}. 


A \textbf{weight} of $\Roots$ is an element $\weight\in V$, such that, for all $\roots\in\Roots$, we have $\sprod{\weight,\roots^\vee}  \in \Z$. 
By the ``crystallographic'' property R2, every root is a weight. 
The set of all weights is called the \textbf{weight lattice} $\Weights$.
This lattice is full--dimensional and invariant under the action of $\weyl$ on $V$, that is, $\Weights \otimes_\Z \R = V$ and $\weyl \, \Weights = \Weights$.

Let $\dim(V)=n$ and $\Base=\{\rho_1,\ldots,\rho_n\}\subseteq \Roots$ be a set of roots with the following properties.
\begin{enumerate}
\item[B1] $\Base$ is an $\R$--vector space basis for $V$.
\item[B2] Every root $\rho \in \Roots$ can be written as $\rho = \alpha_1\,\rho_1 + \ldots +\alpha_n \, \rho_n$ or $\rho = -\alpha_1\,\rho_1 - \ldots -\alpha_n \,\rho_n$ for some $\alpha \in \N^n$.
\end{enumerate}
Such a set $\Base$ always exists \cite[Ch. VI, \S 1, Thm. 3]{bourbaki456} and we define the \textbf{fundamental weights} as the linearly independent $\fweight{1}, \ldots , \fweight{n} \in \Weights$, such that, for $1\leq i,j \leq n$, we have $\sprod{ \fweight{i}, \roots_j^\vee} = \delta_{i,j}$. 

\begin{proposition}\label{remark_PermutationOrb2}
Denote by $\mathfrak{S}_n$ the symmetric group on $n$ points. There exists a permutation $\sigma\in\mathfrak{S}_n$ of order at most $2$, such that, for all $1 \leq i \leq n$, we have $-\fweight{i} \in \weyl \fweight{\sigma(i)}$.
\end{proposition}
\begin{proof}
The statement follows from the existence of the longest Weyl group element $s_0\in\weyl$, which takes $\{\rho_1,\ldots,\rho_n\}$ to $\{-\rho_1,\ldots,-\rho_n\}$, see \cite[Ch. VI, \S 1, Coro. 3 of Prop. 17]{bourbaki456}. 
Hence, there is a permutation $\sigma\in\mathfrak{S}_n$ with $s_0(\rho_i)=-\rho_{\sigma(i)}$. 
Since $s_0^2=\mathrm{Id}_V$ and the inner product is $\weyl$--invariant, $\sigma$ is of order at most $2$ and
\[
	-s_0(\fweight{i})
=	\sum\limits_{j=1}^n \sprod{-s_0(\fweight{i}),\roots_{j}^\vee} \, \fweight{j}
=	\sum\limits_{j=1}^n \sprod{\fweight{i},-s_0(\roots_{j}^\vee)} \, \fweight{j}
=	\sum\limits_{j=1}^n \sprod{\fweight{i},\roots_{\sigma(j)}^\vee} \, \fweight{j}
=	\fweight{\sigma(i)}
\]
is also a fundamental weight.
\end{proof}


Assume that $V=V^{(1)}\oplus\ldots\oplus V^{(k)}$ is the direct sum of proper orthogonal subspaces and that, for each $1\leq i\leq k$, $\Roots^{(i)}$ is a root system in $V^{(i)}$. 
Then $\Roots:=\Roots^{(1)}\cup\ldots\cup\Roots^{(k)}$ is a root system in $V$ called the \textbf{direct sum} of the $\Roots^{(i)}$. 
If a root system is not the direct sum of at least two root systems, then it is called \textbf{irreducible} \cite[Ch. VI, \S 1.2]{bourbaki456}.

Every root system can be uniquely decomposed into irreducible components \cite[Ch. VI, \S 1, Prop. 6, 7]{bourbaki456}. 
There exist seven families of irreducible root systems, which are denoted by $\RootA$, $\RootB$, $\RootC$, $\RootD$, $\RootE[6,7,8]$, $\RootF[4]$ and $\RootG[2]$, see \cite[Ch. VI, \S 4, Thm. 3]{bourbaki456} and \cite[Planches I -- IX]{bourbaki456}. 


\subsection{Multiplicative actions}
\label{subsec_NonlinearActions}


To introduce the orbit space of a Weyl group, we require some multiplicative invariant theory from \cite{lorenz06}. 
Let $\weyl$ be a finite subgroup of $\mathrm{GL}_n(\R)$ and $\Weights\subseteq\R^n$ be an $n$--dimensional lattice, spanned by $\fweight{1},\ldots,\fweight{n}$. 
We assume that $\Weights$ is $\weyl$--invariant. 
Then the $\Z$--module isomorphism $\varphi: \Z^n \to \Weights,\,e_i\mapsto \fweight{i}$ yields a change of basis, so that
\begin{equation}\label{eq_integerrepresentation}
	\gva
:=	\{	\varphi^{-1} \, s \, \varphi\,\vert\, s\in\weyl\}
\end{equation}
is a finite subgroup of $\mathrm{GL}_n(\Z)$. 
We call $\gva$ the \textbf{integer representation} of $\weyl$ with respect to $\fweight{1},\ldots,\fweight{n}$. 
The group $\gva$ acts on the standard basis vectors $e_i\in\Z^n$ as $\weyl$ acts on the lattice generators $\fweight{i}\in\Weights$. 
Clearly, this group does not only depend on $\weyl$ itself, but also on the lattice $\Weights$. 

Denote by $(\C^*)^n := (\C\setminus\{0\})^n$ the algebraic $n$--torus. 
For $x=(x_1,\ldots,x_n)\in(\C^*)^n$ and a column vector $\alpha=(\alpha_1,\ldots,\alpha_n) \in \Z^n$, we define $x^\alpha:=x_1^{\alpha_1}\ldots x_n^{\alpha_n}\in\C^*$. 
Then we have a nonlinear action
\begin{equation}\label{equation_NonlinearAction}
	\star : 	\begin{array}[t]{ccl}
	\gva \times (\C^*)^n & \to 		&	(\C^*)^n, \\
	\left(B, \,x \right) & \mapsto &	B\star x := x^{B^{-1}}
    									:= (x^{B^{-1}_{\cdot 1}},\ldots,x^{B^{-1}_{\cdot n}}),
   \end{array}
\end{equation}
where $B_{\cdot i}^{-1}\in\Z^n$ denotes the $i$--th column vector of $B^{-1}\in\gva$. 
The orbit of an element $x\in(\C^*)^n$ is denoted by $\gva\star x$. 

The coordinate ring of $(\C^*)^n$ with rational coefficients is the ring of multivariate Laurent polynomials $\Rx:=\Q[x_1,x_1^{-1},\ldots, x_n,x_n^{-1}]$. The monomials of $\Rx$ are the $x^\alpha = x_1^{\alpha_1} \ldots x_n^{\alpha_n}$ with $\alpha\in\Z^n$ and $\star$ induces a linear action on $\Rx$, given by its values on the monomial basis
\begin{equation}\label{equation_LinearAction}
\cdot : 	\begin{array}[t]{ccl}
   				\gva \times \Rx & \to &  \Rx, \\
  			 	\left(B, \,x^\alpha\right) & \mapsto & B\cdot x^\alpha :=x^{B \alpha}.
   			\end{array}
\end{equation}
For $f=\sum_\alpha f_\alpha\,x^\alpha\in\Rx$, $B\in\gva$ and $x\in (\C^*)^n$, we have $(B\cdot f)(x) = \sum_\alpha f_\alpha\,x^{B \alpha} = f(B^{-1} \star x)$. 
If $B \cdot f = f$ whenever $B\in\gva$, then $f$ is called \textbf{$\gva$--invariant}. 
The set of all $\gva$--invariant Laurent polynomials is a subring denoted by $\Rx^\gva$. 
The \textbf{orbit polynomial} associated to $\alpha\in\Z^n$ is
\begin{equation}\label{defi_orbitpoly} 
	\bigorb{\alpha}
:=	\dfrac{1}{\vert\gva\vert}\sum\limits_{B\in\gva} x^{B \alpha}\in\Rx^\gva.
\end{equation}
If $\tilde{\alpha}\in\gva\alpha$, then $\bigorb{\tilde{\alpha}}=\bigorb{\alpha}$. The distinct $\bigorb{\alpha}$ form a basis for $\Rx^\gva$ as a $\Q$--vector space \cite[Eq. (3.4)]{lorenz06}. We have $\bigorb{0}=1$ and, for $\alpha,\beta\in\Z^n$,
\begin{equation}\label{proposition_RecurrenceOrbitPolynomials}
	\bigorb{\alpha}\,\bigorb{\beta}
=	\frac{1}{\nops{\gva}} \sum\limits_{B\in\gva}\bigorb{\alpha+B\beta}.
\end{equation}

As a $\Q$--algebra, $\Rx^\gva$ is finitely generated \cite[Coro. 3.3.2]{lorenz06}. 
Hence, we may assume that $\Rx^\gva=\Q[ \orb{1},\ldots,\orb{m}]$ for some minimal $m\in\N$ and $\orb{1},\ldots,\orb{m}\in\Rx^\gva$. 
The generators $\orb{i}$ are called the \textbf{fundamental invariants} of $\gva$ and we define the map
\begin{equation}
\bigcos: 
\begin{array}[t]{ccl}
	(\C^*)^n	&	\to		&	\C^m,\\
	x			&	\mapsto	&	(\orb{1}(x),\ldots,\orb{m}(x)).
\end{array}
\end{equation}
This map is surjective. 
The question is what happens when we restrict to a maximal compact subgroup: 
We set $\TTorus:=\{x \in \C \,\vert\, x\,\overline{x} =1 \}\subseteq \C^*$, where $\overline{x}$ is the complex conjugate. 
For $x\in\TTorus$, we have $x^{-1} = \overline{x}$ and the compact $n$--torus $\TTorus^n$ is $\gva$--invariant, that is, $\gva\star\TTorus^n = \TTorus^n$. 

Analogously to \cite[Ch. 7, \S 4, Thm. 10]{cox13}, one can show the following.

\begin{proposition}\label{theorem_OrbitSperation}
The map
\[
\begin{array}{ccl}
\TTorus^{n}/\gva	&	\to		&	\C^m,\\
\gva \star x		&	\mapsto	&	\bigcos(x) = (\orb{1}(x),\ldots,\orb{m}(x))
\end{array}
\]
is well--defined and injective.
\end{proposition}


\subsection{$\TTorus$--orbit space and Chebyshev polynomials}

Our goal is now to describe the image $\bigcos(\TTorus^n)$ of the compact torus under the map $\bigcos$, when the lattice $\Weights$ is the weight lattice of a root system with Weyl group $\weyl$. 
The integer representation is denoted by $\gva$. 
For $1\leq i\leq n$, we fix $\orb{i} := \bigorb{e_i}$ as the orbit polynomial associated to $e_i$ from \Cref{defi_orbitpoly}. 

\begin{theorem}\label{theorem_BourbakiGenerators}
\emph{\cite[Ch. VI, \S 3, Thm. 1]{bourbaki456}} The orbit polynomials $\orb{1}, \ldots, \orb{n}$ are algebraically independent and $\Rx^{\gva} = \Q[\orb{1},\ldots,\orb{n}]$ is a polynomial algebra.
\end{theorem}

Conversely, Steinberg \cite{steinberg75} and Farkas \cite{farkas84} proved that $\Rx^\gva$ is a polynomial algebra if and only if $\gva$ is the integer representation of a Weyl group defined via the weight lattice as in \Cref{eq_integerrepresentation}. 

According to \Cref{theorem_OrbitSperation}, the map $x\mapsto\bigcos(x):=(\orb{1}(x),\ldots,\orb{n}(x))$ is injective and thus the image $\bigcos(\TTorus^n)$ is in bijection to $\TTorus^n/\gva$. 

\begin{definition}
We call $\Image:=\bigcos(\TTorus^n)$ the \textbf{$\TTorus$--orbit space} of $\gva$. 
\end{definition}

By fixing the fundamental invariants, we have chosen an embedding of the orbit space in the variety defined by the relations of the fundamental invariants $\orb{i}$. 
Since those are algebraically independent, it is an embedding in a real subspace isomorphic to $\R^n$. 
We shall see that the $\TTorus$--orbit space $\Image$ is a compact basic semi--algebraic set. 


To give the defining polynomial inequalities, it suffices to consider the irreducible root systems. 
Indeed, the Weyl group $\weyl$ of a root system $\Roots$ is the outer product of the Weyl groups corresponding to the irreducible components, see the discussion before \cite[Ch. VI, \S 1, Prop. 5]{bourbaki456}. 
If $\Roots$ is not irreducible, that is, if $\Roots=\Roots^{(1)}\cup\ldots\cup\Roots^{(k)}$, then the $\TTorus$--orbit space $\Image$ can be written as a Cartesian product of $\TTorus$--orbit spaces $\Image^{(1)}\times\ldots\times\Image^{(k)}$.

\begin{example}\label{example_multiplicativeaction}
The symmetric group $\mathfrak{S}_{3}$ acts on $\R^3/[1,1,1]^t$ by permutation of coordinates. 
It is the Weyl group $\weyl$ of the root system $\RootA[2]$. 
The weight lattice $\Weights=\Z\,\fweight{1}\oplus\Z\,\fweight{2}$ is hexagonal with $-\fweight{1}\in\weyl\fweight{2}$, see \Cref{A2FundomChangeOfBasis}. 
Over the integers, $\weyl$ is represented as the group $\gva\subseteq \mathrm{GL}_2(\Z)$ generated by
\[
B_1
=	\begin{bmatrix}
	1 &  1 \\
	0 & -1
\end{bmatrix}
\tbox{and}
B_2
=	\begin{bmatrix}
	-1 &  0 \\
	1 &  1
\end{bmatrix}.
\]
The $\TTorus$--orbit space $\Image$ of $\gva$ is a $2$--dimensional real semi--algebraic set, namely the deltoid in \Cref{A2FundomChangeOfBasis}, whose polynomial description is given in \Cref{section_proofA}. 
It is parametrized by the map
\[
	\bigcos:
	\TTorus^2\to\{z\in\C^2\,\vert\,z_1=\overline{z_2}\} \cong \R^2,\,
	x\mapsto (\orb{1}(x),\orb{2}(x))
=	\left(\frac{x_1+x_2/x_1+1/x_2}{3},\,
	\frac{x_2+x_1/x_2+1/x_1}{3}\right).
\]
The cusps of $\Image$ are the three points $\bigcos(\mathfrak{e}^u)$, where $u \in \{ 0, \fweight{1}, \fweight{2} \}$ and $\mathfrak{e}^u:=(e^{-2\pi\mathrm{i}\langle\fweight{1},u\rangle},e^{-2\pi\mathrm{i}\langle\fweight{2},u\rangle})\in\TTorus^2$.
\begin{figure}[H]
	\begin{center}
		\includegraphics[width=0.9\textwidth]{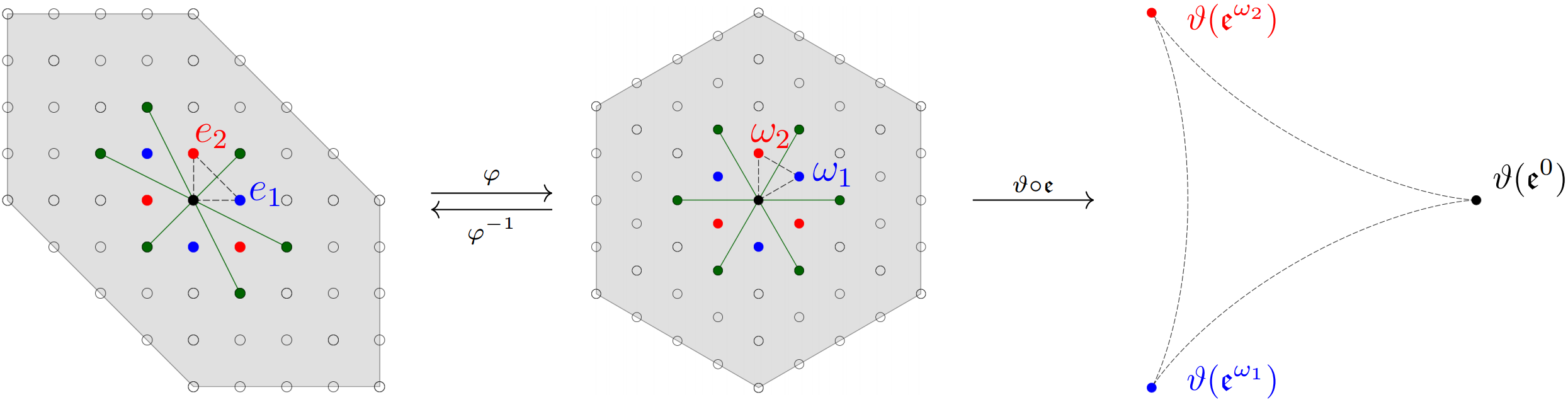}
		\caption{
		The \textcolor{OliveGreen}{root system} $\RootA[2]$ admits a hexagonal weight lattice $\Weights$ (circles $\circ$) with fundamental weights $\textcolor{blue}{\fweight{1}},\textcolor{red}{\fweight{2}}$ and their orbits. 
		Here are the usual orthogonal representation with $\weyl$--symmetry (middle) and the integer representation with $\gva$--symmetry (left). 
		The isomorphism $\varphi:\Z^2\to\Weights$ yields the change of basis from $\{e_1,e_2\}$ to $\{\fweight{1},\fweight{2}\}$. 
		The $\TTorus$--orbit space of $\gva$ (right) is the ``deltoid'', also known as ``Steiner's hypocycloid''.
		}
		\label{A2FundomChangeOfBasis}
	\end{center}
\end{figure}

\end{example}


Thanks to \Cref{theorem_BourbakiGenerators}, we can define a family of multivariate polynomials associated to the weight lattice $\Weights\cong\Z^n$, which generalize the univariate Chebyshev polynomials. Denote by $\RX:=\Q[z_1,\ldots,z_n]$ the multivariate polynomial ring.

\begin{definition}\label{defi_ChebyPoly1}
The \textbf{generalized Chebyshev polynomials of the first kind} associated to $\alpha\in\Z^n$ is the unique $T_{\alpha}\in\RX$ satisfying $T_{\alpha}(\bigcos(x)) = \bigorb{\alpha}(x)$.
\end{definition}

For every $\tilde{\alpha}\in\Z^n$, there exists a unique $\alpha\in\N^n$, such that $\tilde{\alpha}\in\gva\alpha$ \cite[Ch. VI, \S 1, Thm. 2]{bourbaki456}. Hence, we have $T_\alpha=T_{\tilde{\alpha}}$ and $\{T_{\alpha}\,\vert\,\alpha\in\N^n\}$ is a basis for the $\R$--vector space $\RX$. We have $T_0=1$ and $T_{e_i}=z_i$ for $1\leq i \leq n$. The $T_\alpha$ satisfy the same recurrence formula as the $\bigorb{\alpha}$, see \Cref{proposition_RecurrenceOrbitPolynomials}.

Beyond those properties, one can show that the $T_\alpha$ are orthogonal on the $\TTorus$--orbit space and the weight function is closely related to the determinant of the Jacobian of the map $\bigcos$. 
For a proof, we refer to \cite{HoffmanWithers}. 

\begin{example}
The generalized Chebyshev polynomials of the first kind associated to $\RootA[2]$ are
\begin{gather*}
T_{0 2} = 3\,z_2^2 - 2\,z_1, \,
T_{1 1} = 3/2\,z_1\,z_2 - 1/2, \, 
T_{2 0} = 3\,z_1^2 - 2\,z_2, \,
T_{0 3} = 9\,z_2^3 - 9\,z_1\,z_2 + 1, \\
T_{1 2} = 9/2\,z_1\,z_2^2 - 3\,z_1^2 - 1/2\,z_2, \,
T_{2 1} = -3\,z_2^2 - 1/2\,z_1 + 9/2\,z_1^2\,z_2, \,
T_{3 0} = 9\,z_1^3 - 9\,z_1\,z_2 + 1, \, \ldots
\end{gather*}
The generalized Chebyshev polynomials of the first kind associated to $\RootB[2]$ are
\begin{gather*}
T_{0 2} = 4\,z_2^2-2\,z_1-1, \, 
T_{1 1} = 2\,z_1\,z_2-z_2, \, 
T_{2 0} = 4\,z_1^2-8\,z_2^2+4\,z_1+1, \,
T_{0 3} = 16\,z_2^3-12\,z_1\,z_2-3\,z_2, \\
T_{1 2} = 8\,z_1\,z_2^2-4\,z_1^2-3\,z_1, \, 
T_{2 1} = 8\,z_1^2\,z_2-16\,z_2^3+6\,z_1\,z_2+3\,z_2, \,
T_{3 0} = 16\,z_1^3-48\,z_1\,z_2^2+24\,z_1^2+9\,z_1, \, \ldots
\end{gather*}
\end{example}


The following is only relevant if the permutation $\sigma\in\mathfrak{S}_n$ from \Cref{remark_PermutationOrb2} is not the identity. 
In this case, $-I_n\notin\gva$ and $\Image$ is not necessarily a subset of $\R^n$. 
The irreducible root systems, for which this is the case, are both $\RootA$ and $\RootD[2n-1]$ whenever $n\geq 3$ as well as $\RootE[6]$. 
For $x\in\TTorus^n$ and $1\leq i\leq n$, we have
\[
	\overline{\orb{i}(x)}
=	\orb{i}(-I_n\star x)
=	(-I_n\cdot\orb{i})(x)
=	\orb{\sigma(i)}(x).
\]
When $j=\sigma(j)$, we leave the $j$--th coordinate of $\bigcos$ as it is and set $\orb{j,\R} := \orb{j}$. 
When $j < \sigma(j)$, we replace the $j$--th, respectively $\sigma(j)$--th, coordinate of $\bigcos$ by $\orb{j,\R} := \Re(\orb{j}) = (\orb{j} + \orb{\sigma(j)})/2$, respectively $\orb{\sigma(j),\R} := \Im(\orb{j}) = (\orb{j} - \orb{\sigma(j)})/(2\mathrm{i})$. 
The so obtained map
\begin{equation}\label{equation_RealBigCos}
\bigcos_{\R}:
\begin{array}[t]{ccl}
\TTorus^{n}	& \to 		&	\R^{n}, \\
x 			& \mapsto 	&	\left( \orb{1,\R}( x),\ldots, \orb{n,\R}( x)\right)
\end{array}
\end{equation}
has image $\Image_\R\subseteq [-1,1]^n\subseteq \R^n$. This image is called the \textbf{real $\TTorus$--orbit space} of $\gva$. On $\Image_\R$, we now define the real generalized Chebyshev polynomials.

\begin{proposition}\label{eq_RealPartCheby}
Let $\alpha,\conj{\alpha} \in \N^n$ with $-\alpha\in\gva\conj{\alpha}$. 
Then there exist unique polynomials $\TT_\alpha , \TT_{\conj{\alpha}} \in \RX$, such that
\[
	T_{{\alpha}}	(\bigcos(x))
=	\TT_{{\alpha}}	(\bigcos_{\R}(x)) + \mathrm{i} \, \TT_{\conj{\alpha}}	(\bigcos_{\R}(x))
\tbox{and}
	T_{\conj{\alpha}}	(\bigcos(x))
=	\TT_{{\alpha}}	(\bigcos_{\R}(x)) - \mathrm{i}\, \TT_{\conj{\alpha}}	(\bigcos_{\R}(x)) .
\]
\end{proposition}
\begin{proof}
Assume that $T_{\alpha} = \sum_\beta c_\beta\,z^\beta\in\QX$ for some $c_\beta\in\Q$ and $\beta\in\N^n$. 
Then we have
\[
	T_{\alpha} (\bigcos(x))
=	\sum_\beta c_\beta \prod\limits_{j=1}^n \left(\Re(\orb{j}(x))+\mathrm{i}\,\Im(\orb{j}(x))\right)^{\beta_j}
\tbox{and}
	T_{\conj{\alpha}} (\bigcos(x))
=	\sum_\beta c_\beta \prod\limits_{j=1}^n \left(\Re(\orb{\sigma(j)}(x))+\mathrm{i}\,\Im(\orb{\sigma(j)}(x))\right)^{\beta_j} .
\]
If $j=\sigma(j)$, then $\Re(\orb{j}) = \orb{j,\R}$ and $\Im(\orb{j}) = 0$. 
Otherwise, the definition of $\orb{j,\R}$ implies
\[
	\Re(\orb{j}(x))
=	\begin{cases}
	\orb{j,\R}(x),&\mbox{if } j < \sigma(j)\\
	\orb{\sigma(j),\R}(x),&\mbox{if } j > \sigma(j)
	\end{cases}
\tbox{and}
	\Im(\orb{j}(x))
=	\begin{cases}
	\orb{\sigma(j),\R}(x),&\mbox{if } j < \sigma(j)\\
	-\orb{j,\R}(x),&\mbox{if } j > \sigma(j)
	\end{cases}
.
\]
Altogether, we obtain
\[
\begin{array}[t]{rcl}
	\frac{T_{\alpha} (\bigcos(x)) + T_{\conj{\alpha}} (\bigcos(x))}{2}
&=&	\sum\limits_\beta \frac{c_\beta}{2}	\prod\limits_{j=\sigma(j)} \orb{j,\R}(x)^{\beta_j}\\
&&	\left(	\prod\limits_{j<\sigma(j)} \left( \orb{j,\R}(x) + \mathrm{i}\,\orb{\sigma(j),\R}(x) \right)^{\beta_j}
			\left( \orb{j,\R}(x) - \mathrm{i}\,\orb{\sigma(j),\R}(x) \right)^{\beta_{\sigma(j)}} \right.\\
&&	+\left.	\prod\limits_{j<\sigma(j)} \left( \orb{j,\R}(x) - \mathrm{i}\,\orb{\sigma(j),\R}(x) \right)^{\beta_j}
			\left( \orb{j,\R}(x) + \mathrm{i}\,\orb{\sigma(j),\R}(x) \right)^{\beta_{\sigma(j)}} \right)
			.
\end{array}
\]
This is a polynomial in $\bigcos_\R(x) = (\orb{1,\R}(x),\ldots,\orb{n,\R}(x))$, denoted by $\TT_\alpha$. 
Since the left hand side is real for every $x$, the coefficient in front of $\mathrm{i}$ must be $0$. 
Hence, we have $\TT_\alpha\in\QX$. 
Similarly, by computing $(T_{\alpha} - T_{\conj{\alpha}})/(2\mathrm{i})$, we obtain $\TT_{\conj{\alpha}}\in\QX$. 
Since this is an analytical identity on $\C^n$, the polynomials are unique. 
\end{proof}

\begin{example}\label{example_A2Cheb}
Consider the root system $\RootA[2]$ with Weyl group $\mathfrak{S}_3$. 
The generators of the integer representation $\gva$ are given in \emph{\Cref{example_multiplicativeaction}} and we have $\conj{{1 \brack 0}} = {0 \brack 1} \in\Z^2$, that is, $-e_1\in\gva\,e_2$. 
The generalized Chebyshev polynomials of degree $2$ are
\[
	T_{0 2}	=	3\,z_2^2 - 2\,z_1,		\quad
	T_{1 1}	=	3/2\,z_1\, z_2 - 1/2,	\quad
	T_{2 0}	=	3\,z_1^2 - 2\,z_2		\in \RX .
\]
After substitution $z_1\mapsto \widehat{z}_1+\mathrm{i}\,\widehat{z}_2,\,z_2\mapsto \widehat{z}_1-\mathrm{i}\,\widehat{z}_2$, we have
\[
T_{2 0},\,T_{0 2} = (3\,\widehat{z}_1^2 - 3\,\widehat{z}_2^2 - 2\,\widehat{z}_1)	\pm \mathrm{i} \, (6\, \widehat{z}_1\,\widehat{z}_2 + 2\,\widehat{z}_2), \quad
T_{1 1} = (3/2\,\widehat{z}_1^2+3/2\,\widehat{z}_2^2-1/2),
\]
and so the new polynomials from \emph{\Cref{eq_RealPartCheby}} are
\[
	\TT_{0 2} =		6\, \widehat{z}_1\,\widehat{z}_2 + 2\,\widehat{z}_2,	\quad
	\TT_{1 1} =		3/2\,\widehat{z}_1^2+3/2\,\widehat{z}_2^2-1/2,		\quad
	\TT_{2 0} =		3\,\widehat{z}_1^2 - 3\,\widehat{z}_2^2 - 2\,\widehat{z}_1			.
\]
Those are defined on the embedding of the ``deltoid'' in $\R^2$ from \emph{\Cref{example_multiplicativeaction}}, which is the real $\TTorus$--orbit space $\Image_\R$. At higher degrees, the real generalized Chebyshev polynomials of the first kind associated to $\RootA[2]$ are
\begin{gather*}
\TT_{0 3} = 27\,\widehat{z}_1^2\,\widehat{z}_2-9\,\widehat{z}_2^3, \quad 
\TT_{1 2} = 6\,\widehat{z}_1\,\widehat{z}_2-1/2\,\widehat{z}_2+9/2\,\widehat{z}_1^2\,\widehat{z}_2+9/2\,\widehat{z}_2^3, \\
\TT_{2 1} = -3\,\widehat{z}_1^2+3\,\widehat{z}_2^2-1/2\,\widehat{z}_1+9/2\,\widehat{z}_1^3+9/2\,\widehat{z}_1\,\widehat{z}_2^2, \quad 
\TT_{3 0} = 9\,\widehat{z}_1^3-27\,\widehat{z}_1\,\widehat{z}_2^2-9\,\widehat{z}_1^2-9\,\widehat{z}_2^2+1, \\
\TT_{0 4} = 108\,\widehat{z}_1^3\,\widehat{z}_2-108\,\widehat{z}_1\,\widehat{z}_2^3-36\,\widehat{z}_1^2\,\widehat{z}_2-36\,\widehat{z}_2^3-12\,\widehat{z}_1\,\widehat{z}_2+4\,\widehat{z}_2, \\
\TT_{1 3} = 27/2\,\widehat{z}_2^3-5/2\,\widehat{z}_2+27\,\widehat{z}_1\,\widehat{z}_2^3+27/2\,\widehat{z}_1^2\,\widehat{z}_2+27\,\widehat{z}_1^3\,\widehat{z}_2-3\,\widehat{z}_1\,\widehat{z}_2, \\
\TT_{2 2} = -18\,\widehat{z}_1^3-1/2+54\,\widehat{z}_1\,\widehat{z}_2^2+6\,\widehat{z}_1^2+6\,\widehat{z}_2^2+27/2\,\widehat{z}_1^4+27\,\widehat{z}_1^2\,\widehat{z}_2^2+27/2\,\widehat{z}_2^4, \\
\TT_{3 1} = -27/2\,\widehat{z}_1^3-27/2\,\widehat{z}_1\,\widehat{z}_2^2+5/2\,\widehat{z}_1-3/2\,\widehat{z}_1^2+3/2\,\widehat{z}_2^2+27/2\,\widehat{z}_1^4-27/2\,\widehat{z}_2^4, \\ 
\TT_{4 0} = 27\,\widehat{z}_1^4-162\,\widehat{z}_1^2\,\widehat{z}_2^2+27\,\widehat{z}_2^4-36\,\widehat{z}_1^3-36\,\widehat{z}_1\,\widehat{z}_2^2+6\,\widehat{z}_1^2-6\,\widehat{z}_2^2+4\,\widehat{z}_1,\,\ldots
\end{gather*}
\end{example}

\section{Symmetric polynomial systems}
\label{section:symmetricpolynomialsandroots}
\setcounter{equation}{0}

In order to give a polynomial description for the $\TTorus$--orbit space $\Image$ of a Weyl group $\gva$, our first step is to characterize when a symmetric polynomial system has a solution in the compact torus. 
The present section should be regarded as a toolbox for the proofs of the main results. 

For $1\leq i \leq n$, we denote by $\sigma_i(y_1,\ldots,y_n) = \sum_{\nops{K}=i} \prod_{k\in K} y_k$ the $i$--th elementary symmetric function in $n$ indeterminates $y_i$, where $K$ ranges over all subsets of $\{1,\ldots,n\}$ with cardinality $i$. 
We shall be confronted with the following two types of polynomial systems. 
\[
\begin{array}{clcl}
	\mathrm{(I)}	&	\sigma_i(y_1			,\ldots,y_n				)	&=&	(-1)^i\,c_i \tbox{for} 1\leq i \leq n \tbox{with} c_1,\ldots,c_n\in\C \\
	\mathrm{(II)}	&	\sigma_i\left(\frac{y_1+y_1^{-1}}{2}	,\ldots,\frac{y_n+y_n^{-1}}{2}	\right)	&=&	(-1)^i\,c_i \tbox{for} 1\leq i \leq n \tbox{with} c_1,\ldots,c_n\in\R
\end{array}
\]
The goal is to determine, whether all complex solutions $y=(y_1,\ldots,y_n)$ of system $\mathrm{(I)}$, respectively $\mathrm{(II)}$, are contained in the compact torus $\TTorus^n$.

We pursue this goal, because the above situation arises in \Cref{section_proofA,section_proofC,section_proofB,section_proofD,section_proofG,section_proofMain}. 
Later, the $y_i$ will assume the role of multivariate monomials in $\Rx$, such that $y(x)\in\TTorus^n$ if and only if $z=\bigcos(x)\in\Image$ and the left hand side of system $\mathrm{(I)}$ or $\mathrm{(II)}$ is an invariant in $\Rx^\gva$. 
This section will provide an indirect criterion for $z\in\Image$ in terms of the coefficients $c_i$, which will later be substituted by polynomials in $z$. 


\subsection{Solutions in the compact torus}

Recall Vieta's formula $\prod\limits_{k=1}^n (x-y_k) = x^n + \sum\limits_{i=1}^n (-1)^i\,\sigma_i(y_1,\ldots,y_n)\,x^{n-i}$.
Here, $x$ is a univariate indeterminate. 

\begin{lemma}\label{Correspondence2S}
\begin{enumerate}
\item The map
\[
\begin{array}[t]{ccl}
\C^*	&	\to		&	\C,\\
 x		&	\mapsto	&	\frac{x+x^{-1}}{2}
\end{array}
\]
is surjective and the preimage of the real interval $[-1,1] \subseteq \R$ is $\TTorus$.
\item System $\mathrm{(I)}$ has a unique solution $y\in\C^n$ up to permutation of its coordinates $y_i$.
\item System $\mathrm{(II)}$ has a unique solution $y\in(\C^*)^n$ up to permutation and inversion of its coordinates $y_i$. 
\end{enumerate}
\end{lemma}
\begin{proof}
\emph{1.} For $r\in\C$, consider the univariate polynomial $p:=x^2-2\,r\,x+1$. Then $0$ is not a root of $p$ and we have $p(x)=0$ if and only if $r=(x+x^{-1})/2$, that is, $r$ is in the image of the map. If $r\in [-1,1]$, then the discriminant of $p$ is $r^2-1\leq 0$ and the two roots are $ x, \overline{x} = x,x^{-1} = r \pm \mathrm{i}\,\sqrt{1-r^2}\in\TTorus$. On the other hand, for $x\in\TTorus$, we have $ (x +  x^{-1})/2=(x+\overline{ x})/2 = \Re( x) \in [-1,1]$.

\emph{2.} By Vieta's formula, a solution $y$ of system $\mathrm{(I)}$ is the vector of roots of the polynomial $x^n+c_1\,x^{n-1}+\ldots+c_n$. Since $\C$ is algebraically closed, the statement follows.

\emph{3.} By \emph{1.}, we can write the roots $r_1,\ldots,r_n$ of the polynomial $x^n+c_1\,x^{n-1}+\ldots+c_n$ as $r_i=(y_i+y_i^{-1})/2$ for some $y \in (\C^*)^n$. Then $y$ is a unique solution of $\mathrm{(II)}$ up to permutation and inversion.
\end{proof}

From now on, we speak of ``the'' solution of system $\mathrm{(I)}$, respectively $\mathrm{(II)}$.

\begin{proposition}\label{SolutionsSystemII}
For $c_1,\ldots,c_n \in\R$, the corresponding solution of system $\mathrm{(II)}$ is contained in $\TTorus^n$ if and only if all the roots of the univariate polynomial
\[
	x^n+c_1\,x^{n-1}+\ldots+c_{n-1}\,x+c_n
\]
are contained in $[-1,1]$.
\end{proposition}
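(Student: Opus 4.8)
The plan is to exploit the description of the solution of system $\mathrm{(II)}$ already worked out in \lemr{Correspondence2S}, part 3. There we saw that the solution $(y_1,\ldots,y_n)$ of system $\mathrm{(II)}$ is obtained by writing the roots $r_1,\ldots,r_n$ of the univariate polynomial
\[
	p(x) = x^n + c_1\,x^{n-1} + \ldots + c_n \in\Rt
\]
(whose coefficients are, by Vieta's formula \eqref{ESPCoefficients}, exactly $(-1)^i\,c_i = (-1)^i\,(-1)^i\,c_i$ read off the right-hand sides, up to the sign bookkeeping) as $r_k = y_k + y_k^{-1}$. So the whole statement reduces to tracking, for each $k$, when $y_k \in \TTorus$.

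The key step is the equivalence supplied by \lemr{Correspondence2S}, part 2: the map $x \mapsto x + x^{-1}$ sends $\TTorus$ onto $[-2,2]$, and the preimage of $[-2,2]$ under this map is \emph{exactly} $\TTorus$. Hence for a single root $r_k$, choosing $y_k$ with $y_k + y_k^{-1} = r_k$, we have $y_k \in \TTorus$ if and only if $r_k \in [-2,2]$; moreover this does not depend on which of the two preimages $y_k, y_k^{-1}$ we pick, since $\TTorus$ is closed under inversion. Therefore the solution of system $\mathrm{(II)}$ lies in $\TTorus^n$ if and only if every root $r_k$ of $p$ lies in $[-2,2]$.

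Concretely I would argue: ($\Rightarrow$) If the solution $(y_1,\ldots,y_n)$ of system $\mathrm{(II)}$ lies in $\TTorus^n$, then each $r_k = y_k + y_k^{-1} = y_k + \overline{y_k} = 2\,\Re(y_k) \in [-2,2]$ by part 2 of \lemr{Correspondence2S}, and since the $r_k$ are precisely the roots of $p$, all roots of $p$ lie in $[-2,2]$. ($\Leftarrow$) Conversely, if all roots $r_1,\ldots,r_n$ of $p$ lie in $[-2,2]$, then for each $k$ part 2 of \lemr{Correspondence2S} gives a $y_k \in \TTorus$ with $y_k + y_k^{-1} = r_k$; the resulting $(y_1,\ldots,y_n)$ satisfies system $\mathrm{(II)}$ by \eqref{ESPCoefficients}, and by the uniqueness (up to permutation and inversion) asserted in \lemr{Correspondence2S} it \emph{is} the solution, which thus lies in $\TTorus^n$.

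There is essentially no hard analytic core here — the content was front-loaded into \lemr{Correspondence2S}. The one point that needs care is the sign bookkeeping: system $\mathrm{(II)}$ is stated as $\sigma_i(y_1 + y_1^{-1},\ldots,y_n + y_n^{-1}) = (-1)^i c_i$, so by Vieta \eqref{ESPCoefficients} the monic polynomial with roots $r_k = y_k + y_k^{-1}$ is $x^n + \sum_{i=1}^n (-1)^i \sigma_i(r_1,\ldots,r_n)\,x^{n-i} = x^n + \sum_{i=1}^n (-1)^i (-1)^i c_i\,x^{n-i} = x^n + c_1 x^{n-1} + \ldots + c_n$, matching the polynomial in the statement exactly; I would spell this identification out so the reader sees the $(-1)^i$'s cancel. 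The only other thing to flag is that ``the solution'' is well defined only up to permutation and inversion of coordinates, but membership in $\TTorus^n$ is invariant under both operations, so the statement is unambiguous.
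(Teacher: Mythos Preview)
Your proposal is correct and follows essentially the same approach as the paper: identify the roots $r_k$ of $p$ with $y_k+y_k^{-1}$ via Vieta's formula \eqref{ESPCoefficients}, then invoke \lemr{Correspondence2S} part 2 to conclude $y_k\in\TTorus$ if and only if $r_k\in[-2,2]$. The paper's version is terser (three sentences) and omits the sign bookkeeping and the remark on well-definedness, but the argument is the same.
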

\begin{proof}
If $y\in(\C^*)^n$ is the solution of system $\mathrm{(II)}$, then the roots of $x^n+c_1\,x^{n-1}+\ldots+c_n$ are $r_i := (y_i+ y_i^{-1})/2$. 
According to \Cref{Correspondence2S}, we have $ y\in\TTorus^n$ if and only if $r_1,\ldots, r_n\in[-1,1]$.
\end{proof}

Similarly, we can characterize solutions of system $\mathrm{(I)}$. Recall that the univariate Chebyshev polynomial of the first kind associated to $\ell\in\N$ is the unique $T_\ell$ with $T_\ell((x+x^{-1})/2)=(x^\ell+x^{-\ell})/2$. 
For $p\neq0$ a polynomial in $x$, we denote by $\coeff(x^\ell,p)$ the coefficient of the monomial $x^\ell$ in $p$ for $0\leq \ell \leq \deg(p)$.

\begin{proposition}\label{SolutionsSystemI}
For $c_1,\ldots,c_{n-1}\in\C$ with $\overline{c_i}=(-1)^n \,c_{n-i}$ and $c_0:=(-1)^n  \,c_n:=1$, the corresponding solution of system $\mathrm{(I)}$ is contained in $\TTorus^n$ if and only if all the roots of the univariate polynomial
\[
	T_n(x) + d_1\,T_{n-1}(x) + \ldots + d_{n-1} \, T_1(x) + \dfrac{d_n}{2} \, T_0(x)
	\tbox{with}
	d_{\ell}=\sum_{i=0}^\ell \overline{c_i}\,c_{\ell-i} \in \R
\]
are contained in $[-1,1]$.
\end{proposition}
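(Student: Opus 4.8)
The idea is to reduce system $\mathrm{(I)}$ to a version of system $\mathrm{(II)}$ by a squaring/conjugation trick, so that \prpr{SolutionsSystemII} applies and then translate the resulting $[-2,2]$--condition into a $[-1,1]$--condition via the Chebyshev substitution. Let $y=(y_1,\ldots,y_n)$ be the solution of system $\mathrm{(I)}$ and set $q:=x^n+c_1 x^{n-1}+\ldots+c_{n-1}x+c_n\in\Ct$, the polynomial whose roots are $y_1,\ldots,y_n$ (by \Cref{ESPCoefficients} and \Cref{Correspondence2S}.1). The symmetry hypothesis $\overline{c_i}=(-1)^n c_{n-i}$ (with $c_0=(-1)^n c_n=1$) says precisely that $q$ is, up to the unimodular scalar $(-1)^n$, its own ``conjugate-reciprocal'': $x^n\overline{q(1/\bar x)} = (-1)^n q(x)$. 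Hence the root multiset of $q$ is closed under $y\mapsto 1/\bar y$, i.e. under conjugation composed with inversion. The first step is to record this and to note its consequence: $y\in\TTorus^n$ (i.e. every $y_i$ satisfies $y_i\overline{y_i}=1$) if and only if $\lvert y_i\rvert=1$ for all $i$, which — since the root set is $1/\bar y$--stable — is equivalent to all roots of $q$ lying on the unit circle.

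Next I would pass to the polynomial $p(x):=\prod_{i=1}^n\bigl(x-(y_i+y_i^{-1})\bigr)$. By \Cref{Correspondence2S}.2 the $y_i$ all lie on $\TTorus$ exactly when each $y_i+y_i^{-1}\in[-2,2]$, i.e. exactly when all roots of $p$ lie in $[-2,2]$; this is the point where the structure of \Cref{SolutionsSystemII} is mirrored, though here $p$ need not a priori have real coefficients. The key computation is to identify $p$ explicitly in terms of $q$. Writing $r_i=y_i+y_i^{-1}$, one has $p(x)=\prod_i(x-y_i-y_i^{-1})$; multiplying through by $\prod_i y_i$ and using that $\prod_i(x y_i - y_i^2-1) = \prod_i\bigl(-(y_i - \tfrac{x+\sqrt{x^2-4}}{2})(y_i-\tfrac{x-\sqrt{x^2-4}}{2})\bigr)$, one gets (after clearing the unimodular normalization $\prod y_i = (-1)^n c_n^{-1}\cdot$something, which the hypothesis $c_n=(-1)^n$ makes $\pm1$)
\[
p(x) \;=\; \prod_{i=1}^n\Bigl(x - y_i - y_i^{-1}\Bigr) \;=\; (-1)^n\,q\!\left(\tfrac{x+\sqrt{x^2-4}}{2}\right)\,q\!\left(\tfrac{x-\sqrt{x^2-4}}{2}\right),
\]
where the right-hand side is a genuine polynomial in $x$ because it is symmetric under $\sqrt{x^2-4}\mapsto-\sqrt{x^2-4}$. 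Substituting $x=u+u^{-1}$ turns the two arguments into $u$ and $u^{-1}$ (in some order), so $p(u+u^{-1}) = (-1)^n q(u)\,q(u^{-1}) = (-1)^n q(u)\cdot u^{-n}\overline{\text{(reciprocal of }q)}$; invoking the conjugate-reciprocal symmetry $u^n\overline{q(1/\bar u)}=(-1)^n q(u)$ one sees that, for $u\in\TTorus$, $p(u+u^{-1}) = q(u)\overline{q(u)} = \lvert q(u)\rvert^2\geq 0$. I would then expand $q(u)q(u^{-1}) = \bigl(\sum_i c_i u^{n-i}\bigr)\bigl(\sum_j \overline{c_j}\, u^{-(n-j)}\bigr)$ using $\overline{c_i}=(-1)^n c_{n-i}$ (which makes this product manifestly self-reciprocal), collect the coefficient of $u^k+u^{-k}$, and recognize it as $d_k=\sum_{i=0}^k\overline{c_i}c_{k-i}$ — noting $d_k\in\R$ since $\overline{d_k}=\sum_i c_i\overline{c_{k-i}}=d_k$. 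In terms of the Chebyshev basis, $u^k+u^{-k}=2T_k\bigl((u+u^{-1})/2\bigr)$, so $p(x) = 2T_n(x/2)+2d_1T_{n-1}(x/2)+\ldots+2d_{n-1}T_1(x/2)+d_nT_0(x/2)$ up to an overall constant; rescaling the variable by $2$ gives exactly the polynomial $P(x):=T_n(x)+d_1T_{n-1}(x)+\ldots+d_{n-1}T_1(x)+\tfrac{d_n}{2}T_0(x)$ in the statement, with $P\in\Rt$, and $p(x)$ has all roots in $[-2,2]$ if and only if $P$ has all roots in $[-1,1]$.

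Assembling the chain: $y\in\TTorus^n$ $\iff$ all roots of $q$ on the unit circle $\iff$ (via \Cref{Correspondence2S}.2, since the root set of $q$ is $1/\bar y$--stable so $|y_i|=1$ for all $i$ is the same as all $r_i\in[-2,2]$) all roots of $p$ in $[-2,2]$ $\iff$ all roots of $P$ in $[-1,1]$. The reduction to \Cref{SolutionsSystemII} can be stated directly: the vector $(y_1+y_1^{-1},\ldots,y_n+y_n^{-1})$ is the solution of a system of type $\mathrm{(II)}$ whose real coefficients are read off from $p$, and \Cref{SolutionsSystemII} gives precisely the $[-2,2]$ criterion; the Chebyshev rescaling then yields the $[-1,1]$ form. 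The main obstacle I anticipate is the explicit algebraic identification $p(x)=(-1)^n q\bigl(\tfrac{x+\sqrt{x^2-4}}{2}\bigr)q\bigl(\tfrac{x-\sqrt{x^2-4}}{2}\bigr)$ together with the bookkeeping that shows the coefficient of $T_\ell$ is exactly $d_\ell=\sum_{i=0}^\ell \overline{c_i}c_{\ell-i}$ (including getting the half on $T_0$ right and checking the overall normalization using $c_0=(-1)^nc_n=1$); this is the one genuinely computational point, and it hinges essentially on the hypothesis $\overline{c_i}=(-1)^n c_{n-i}$, which is what makes $q(u)q(u^{-1})$ both real-coefficiented in the $T_\ell$ basis and equal to $\lvert q(u)\rvert^2$ on the torus. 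Everything else is a formal consequence of \Cref{Correspondence2S} and \Cref{SolutionsSystemII}.
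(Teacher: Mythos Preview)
Your approach is correct and is essentially the paper's own argument: form the self-reciprocal real product $(-1)^n q(u)\,q(u^{-1})$ (the paper writes this as $p\tilde p$ with $\tilde p(x)=(-x)^n p(x^{-1})$, a degree-$2n$ palindromic polynomial in $u$), expand it in the Chebyshev basis via the substitution $u+u^{-1}\mapsto 2x$ to read off the coefficients $d_\ell$, and invoke \Cref{Correspondence2S} for the $\TTorus\leftrightarrow[-1,1]$ correspondence. Two cosmetic points: the $1/\bar y$-stability observation is not actually used anywhere in the chain of equivalences, and your displayed expansion of $q(u)q(u^{-1})$ has a small index slip---the second factor should be $(-1)^n\sum_k\overline{c_k}\,u^{-k}$ rather than $\sum_j\overline{c_j}\,u^{-(n-j)}$---but neither affects the validity of the plan.
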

\begin{proof}
The solution of system $\mathrm{(I)}$ is contained in $\TTorus^n$ if and only if all the roots of the univariate polynomial $p:=x^n+c_1\,x^{n-1}+\ldots+c_n$ are contained in $\TTorus$.

The roots of $p$ are nonzero, because $p(0)=c_n=(-1)^n \neq 0$. We fix another univariate polynomial $\tilde{p}:=x^n+\overline{c_1}\,x^{n-1}+\ldots+\overline{c_n}$. Since $\tilde{p}(x) = (-x)^n\,p(x^{-1})$, the set of roots of $p\tilde{p}$ consists of the roots of $p$ and their inverses. 
In particular, all the roots of $p\tilde{p}$ are contained in $\TTorus$ if and only if all the roots of $p$ are. 
For $0\leq \ell\leq n$, the coefficients of $p\tilde{p}$ satisfy
\[
	\coeff(x^\ell,p\tilde{p})	
=	\sum\limits_{i=0}^\ell 	c_{n-i} \, \overline{c_{n-\ell+i}}
=	\sum\limits_{i=0}^\ell \overline{c_{i}} \, c_{\ell-i}
=	\sum\limits_{i=0}^\ell \coeff(x^{n-i},\tilde{p}) \, \coeff(x^{n-\ell+i},p)	
=	\coeff(x^{2n-\ell},p\tilde{p}) .
\]
The coefficients are real, because
\[
	\sum\limits_{i=0}^\ell \overline{c_{i}} \, c_{\ell-i}
=	\sum\limits_{i=0}^{\lfloor (\ell-1)/2 \rfloor} \underbrace{(\overline{c_i}\,c_{\ell-i} +  \overline{c_{\ell-i}}\,c_i)}_{\in\R} +
\begin{cases}
	c_{\ell/2}\,\overline{c_{\ell/2}},	&	\ell \mbox{ even}\\
	0,									&	\ell \mbox{ odd}
\end{cases}
\in	\R .
\]
Thus, $\coeff(x^\ell,p\tilde{p})=\coeff(x^{2n-\ell},p\tilde{p})=d_\ell\in\R$ and 
we can write
\[
	p\tilde{p}
=	\sum\limits_{\ell=1}^{n}	d_{n-\ell} (x^{n+\ell}+x^{n-\ell}) + d_n
=	2  x^n \left( \sum\limits_{\ell=1}^{n}	d_{n-\ell} T_\ell\left(\dfrac{x+x^{-1}}{2}\right) + \dfrac{d_n}{2} T_0\left(\dfrac{x+x^{-1}}{2}\right) \right)
=:	2 x^n g\left(\dfrac{x+x^{-1}}{2}\right).
\]
With \Cref{Correspondence2S}, we see that $x\in\TTorus$ is a root of $p\tilde{p}$ if and only if $(x+x^{-1})/2\in [-1,1]$ is a root of $g$.
\end{proof}

The following example illustrates the proof. 

\begin{example}\label{Example_PalindromicPoly}
For $c\in\C$, set $p=x^3-c\,x^2+\overline{c}\,x-1$ and consider the palindromic polynomial $p\tilde{p}\in\Rt$ with
\begin{align*}
	&	\,\dfrac{ 1}{2 x^{3}} \, p(x) \tilde{p}(x) \\
	=	&	\,\dfrac{ 1}{2 x^{3}} \, \left((x^6+1)-(c+\overline{c})(x^5+x)+(c\,\overline{c}+c+\overline{c})(x^4+x^2)-\dfrac{c^2+\overline{c}^2+2}{2}\,x^3\right)\\
	=	&	\,\dfrac{x^3+x^{-3}}{2}-(c+\overline{c})\,\dfrac{x^2+x^{-2}}{2}+(c\,\overline{c}+c+\overline{c})\,\dfrac{x+x^{-1}}{2}-\dfrac{c^2+\overline{c}^2+2}{2} \\
	=	&	\,T_3\left(\dfrac{x+x^{-1}}{2}\right)-(c+\overline{c})\,T_2\left(\dfrac{x+x^{-1}}{2}\right)+(c\,\overline{c}+c+\overline{c})\,T_1\left(\dfrac{x+x^{-1}}{2}\right)-\dfrac{c^2+\overline{c}^2+2}{2}\,T_0\left(\dfrac{x+x^{-1}}{2}\right)\\
	=	&	\,g\left(\dfrac{x+x^{-1}}{2}\right).
\end{align*}
All the roots of $p$ are contained in $\TTorus$ if and only if all the roots of $g$ are contained in $[-1,1]$, that is, if and only if the solution of system $\mathrm{(I)}$ with right hand side $c_1=-c,c_2=\overline{c},c_3=-1$ is in $\TTorus^3$.
\end{example}

\subsection{Characterization via Hermite quadratic forms}
\label{subsec_SturmSylvester}

Let $p,q\in\Rt$ be univariate polynomials with $p\neq 0$. The multiplication by $q$ in the algebra $\Rt/\langle p \rangle$ is
\[
m_q:
\begin{array}[t]{ccl}
\Rt/\langle p \rangle	&	\to		&	\Rt/\langle p\rangle , \\
f + \langle p \rangle	&	\mapsto	&	q \, f + \langle p \rangle .
\end{array}
\]
A classical result, usually referred to as Sylvester's version of Sturm's theorem, allows us to effectively characterize when a univariate polynomials has all of its roots in $[-1,1]$.

\begin{theorem}\label{HermiteCharacterization}
All the roots of $p$ are contained in $S(q):=\{ x \in\R \,\vert\, q(x) \geq 0 \}$ if and only if the Hermite quadratic form
\[
	H(p,q):\begin{array}[t]{rcl}
	\Rt/\langle p \rangle	& 	\to		&	\R , \\
	f+\langle p \rangle		&	\mapsto	&	\trace (m_{q\,f^2})
	\end{array}
\]
is positive semi--definite.
\end{theorem}
\begin{proof}
Let $H\in\R^{n\times n}$ be the symmetric matrix associated to $H(p,q)$ for a fixed basis of $\Rt/\langle p \rangle$, where $n=\deg(p)$. Denote by $N_+$, respectively $N_-$, the number of strictly positive, respectively negative, eigenvalues of $H$, including their multiplicities. By \cite[Ch. 2, Thm. 5.2]{cox05}, the rank and the signature of $H(p,q)$ are
\[
\begin{array}{rcrcl}
N_+ + N_-	& = &	\mathrm{Rank}(H(p,q))	& = &	\vert\{  x\in\C \, \vert\, p(x) =0 ,\, q( x) \neq 0 \}\vert ,	\\
N_+ - N_-	& = &	\mathrm{Sign}(H(p,q))	& = &	\underbrace{\vert\{  x\in \R \, \vert \,p(x)=0,\, q( x) > 0 \}\vert}_{=:n_+} \, - \, \underbrace{\vert\{  x\in \R \, \vert \, p(x)=0 ,\, q( x) < 0 \}\vert}_{=:n_-}.
\end{array}
\]
If all the roots of $p$ are contained in $S(q)$, then $n_-=0$, and thus
\[
	N_+ + N_-
=	\mathrm{Rank}(H(p,q))
=	\mathrm{Sign}(H(p,q))
=	n_+
=	N_+ - N_-.
\]
Hence, $N_-=0$ and all eigenvalues of $H$ are nonnegative, that is, $H(p,q)$ is positive semi--definite.

Conversely, assume that $H(p,q)$ is positive semi--definite. Then $N_-=0$ and
\[
	N_+
=	\mathrm{Sign}(H(p,q))
=	n_+-n_-\leq \mathrm{Rank}(H(p,q))
=	N_+,
\]
that is, $ n_+ - n_- = \mathrm{Rank}(H(p,q))$. 
On the other hand, $\mathrm{Rank}(H(p,q)) \geq n_+ + n_-$. 
Hence, $n_-=0$ and $\mathrm{Rank}(H(p,q)) = n_+$ implies that all the roots of $p$ are real and contained in $S(q)$.
\end{proof}


Finally, we compute the matrix of the Hermite quadratric form. 
If $p=x^n+c_1\,x^{n-1}+\ldots+c_{n-1}\,x+c_n$, then the matrix of the multiplication $m_x$ in $\Rt/\langle p \rangle$ in the basis $\{1,x,\ldots,x^{n-1}\}$ is the companion matrix
\begin{equation}\label{equation_CompMatrix}
	\begin{bmatrix}
		0		&			&	0		&	-c_n	\\
		1		&	\ddots	&			&	\vdots	\\
		&	\ddots	&	0		&	-c_2	\\
		0		&			&	1		&	-c_1	
	\end{bmatrix}
\end{equation}
of $p$, because $x\,x^i=x^{i+1}$ whenever $0\leq i\leq n-2$ and $x\,x^{n-1} = x^n\equiv-c_1\,x^{n-1}-\ldots-c_{n-1}\,x-c_n \mbox{  mod  } \langle p \rangle$.

\begin{corollary}\label{Corollary_SolutionsSystemII}
For $c_1,\ldots,c_{n} \in\R$, denote by $C\in\R^{n\times n}$ the matrix from \emph{\Cref{equation_CompMatrix}}. 
Then the solution of system $\mathrm{(II)}$ is contained in $\TTorus^n$ if and only if $H\succeq0$, where $H\in\R^{n\times n}$ has entries $H_{ij} = \trace(C^{i+j-2}-C^{i+j})$.
\end{corollary}
\begin{proof}
$H$ is the matrix associated to the Hermite quadratic form $H(p,q)$ from \Cref{HermiteCharacterization} with $q = 1 - x^2 $ in the basis $\{1,x,x^2,\ldots,x^{n-1}\}$. Indeed, by \cite[Ch. 2, Prop. 4.2]{cox05}, the matrix entries are
\[
	H_{ij}
=	\trace(m_{q\,x^{i-1}\,x^{j-1}})
=	\trace(m_{x^{i+j-2}-x^{i+j}})
=	\trace(m_x^{i+j-2}-m_x^{i+j})
\]
for $1\leq i,j\leq n$. 
Since the trace is basis independent, we can replace $m_x$ with its matrix $C$ given by \Cref{equation_CompMatrix}. 
The statement now follows from \Cref{SolutionsSystemII}, because $\{x\in\R\,\vert\,q(x)\geq 0\} = [-1,1]$.
\end{proof}

On the other hand, the univariate Chebyshev polynomials also form a basis $\{T_0,T_1,\ldots,T_{n-1}\}$ of $\Rt/\langle p \rangle$. 
If $n\geq 3$ and $p=T_n+d_1\,T_{n-1}+\ldots+d_{n-1}\,T_1+d_n/2 \, T_0\in\Rt$, then the matrix of $m_x$ is
\begin{equation}\label{equation_CompMatrixChebyshev}
	\begin{bmatrix}
		0	&	1/2	&			&			&	0	&	-d_n/4		\\
		1	&	0	&	\ddots	&			&		&	-d_{n-1}/2	\\
		&	1/2	&	\ddots	&	\ddots	&		&	\vdots			\\
		&		&	\ddots	&	\ddots	&	1/2	&	-d_3/2 		\\
		&		&			&	\ddots	&	0	&	(1-d_2)/2	\\
		0	&		&			&			&	1/2	&	-d_1/2 
	\end{bmatrix}.
\end{equation}
The entries originate from the recurrence formula $2\,x\,T_j=T_{j+1}+T_{j-1}$. 
In particular, the last column is obtained through
\[
2\,x\,T_{n-1}	=	T_n + T_{n-2}	\equiv	-d_1\,T_{n-1}+(1-d_2)\,T_{n-2}-d_3\,T_{n-3}-\ldots-d_{n-1}\,T_1-d_n/2\,T_0  \tbox{mod} \langle p \rangle .
\]

\begin{corollary}\label{Corollary_SolutionsSystemI}
Let $n\geq 3$. For $c_1,\ldots,c_{n-1} \in\C$ with $\overline{c_i}=(-1)^n \,c_{n-i}$ and $c_0:=(-1)^n \,c_n:=1$, set
\[
	d_{\ell}
=	\sum_{i=0}^\ell \overline{c_i}\,c_{\ell-i} \in \R
\]
for $1\leq \ell\leq n$ and denote by $C\in\R^{n\times n}$ the matrix from \emph{\Cref{equation_CompMatrixChebyshev}}. 
Then the solution of system $\mathrm{(I)}$ is contained in $\TTorus^n$ if and only if $H\succeq0$, where $H\in\R^{n\times n}$ has entries $H_{ij} = \trace(C^{i+j-2}-C^{i+j})$.
\end{corollary}
\begin{proof}
The proof is analogous to the one of \Cref{Corollary_SolutionsSystemII} with \Cref{SolutionsSystemI}.
\end{proof}




\section{Type $\RootA$}
\label{section_proofA}
\setcounter{equation}{0}

\begin{figure}[H]
	\begin{center}
		\begin{overpic}[width=0.45\textwidth,grid=false,tics=10]{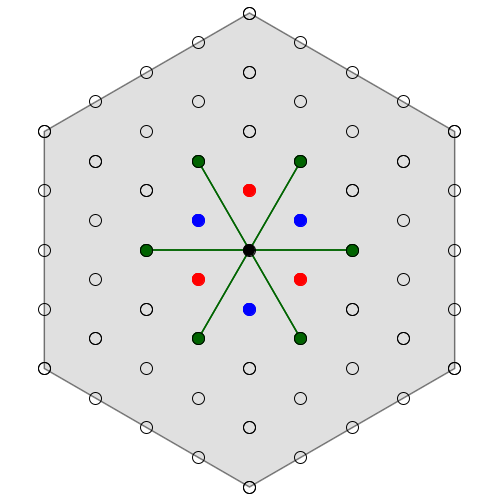}
			\put (63,55) {\large \textcolor{blue}{$\displaystyle \fweight{1}$}}
			\put (50,65) {\large \textcolor{red}{$\displaystyle \fweight{2}$}}
		\end{overpic}
		\quad
		\begin{overpic}[width=0.45\textwidth,grid=false,tics=10]{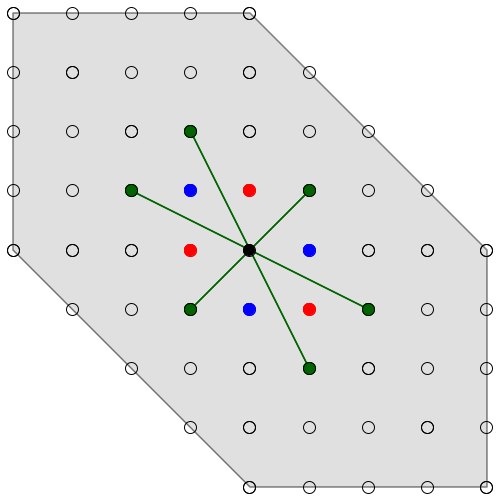}
			\put (64,50) {\large \textcolor{blue}{$\displaystyle e_{1}$}}
			\put (50,65) {\large \textcolor{red}{$\displaystyle e_{2}$}}
		\end{overpic}
	\end{center}
	\caption{The \textcolor{OliveGreen}{root system} $\RootA[2]$ and its weight lattice in the usual orthogonal representation and the integer representation. The orbits of the fundamental weights are the \textcolor{blue}{blue} and \textcolor{red}{red} lattice elements.}
	\label{example_rootsystemA2}
\end{figure}

The group $\mathfrak{S}_{n}$ acts on $\R^n$ by permutation of coordinates and leaves the subspace $V := \R^n/\langle [1,\ldots,1]^t\rangle = \{u\in\R^{n}\,\vert\,u_1+\ldots +u_{n}=0\}$ invariant. The root system $\RootA$ given in \cite[Planche I]{bourbaki456} is a root system of rank $n-1$ in $V$ with fundamental weights
\begin{equation}\label{equation_WeightsRootsA}
	\fweight{i}=\sum_{j=1}^i e_j -\dfrac{i}{n} \sum_{j=1}^{n} e_j = \dfrac{1}{n} (\underbrace{n-i,\ldots, n-i}_{i\,\mbox{\small{ times}}},\underbrace{ -i, \ldots, -i}_{n-i\,\mbox{\small{ times}}})
	\quad(1\leq i\leq n-1).
\end{equation}
Here, the $e_i\in\R^n$ are the Euclidean standard basis vectors. 
In particular, the weight lattice is $\Weights:=\{ u/n \,\vert\, u\in V\cap \Z^n\}$. 
The Weyl group $\weyl\cong\mathfrak{S}_{n}$ of $\RootA$ is orthogonal with respect to the Euclidean scalar product and we have $-\fweight{i}\in\weyl\,\fweight{n-i}$. The orbit $\weyl\,\fweight{i}$ has cardinality $\binom{n}{i}$ for $1\leq i \leq n-1$, see \Cref{example_rootsystemA2}. 

In this section, we give a closed formula for the matrix polynomial from \Cref{MainThmIntro} in the standard monomial basis for the root system $\RootA$. The ring of Laurent polynomials is $\Rx=\Q[x_1,x_1^{-1},\ldots, x_{n-1},x_{n-1}^{-1}]$ and the polynomial ring is $\RX=\Q[z_1,\ldots,z_{n-1}]$. 


\subsection{Orbit polynomials}

With $\weyl\cong\mathfrak{S}_{n}$ and \Cref{equation_WeightsRootsA}, one observes that a weight $\weight\in\Weights$ is contained in the $\weyl$--orbit of the first fundamental weight $\fweight{1}$ if and only if
\[
	\weight = 
	\begin{cases}
	\fweight{1},\\
	-\fweight{n-1},\quad \mbox{or}\\
	\fweight{i+1}-\fweight{i},\quad 1\leq i\leq n-2.
	\end{cases}
\]
We denote by $\gva\in\mathrm{GL}_{n-1}(\Z)$ the integer representation of $\weyl$ defined by \Cref{eq_integerrepresentation}. 
Then the corresponding orbit of monomials $\gva \cdot x_1 = \{ x^{B e_1} \,\vert\, B\in \gva \} \subseteq \Rx$ consists of $n$ distinct monomials, namely
\begin{equation}\label{eq_monomialsA}
	y_1:=x_1 ,\quad y_2:=x_2\,x_{1}^{-1} ,\quad \ldots ,\quad y_{n-1}:=x_{n-1}\,x_{n-2}^{-1} ,\quad  y_n:=x_{n-1}^{-1} .
\end{equation}
For $1\leq i\leq n$, let $\sigma_i$ be the $i$--th elementary symmetric function in $n$ indeterminates. 
Recall that, for $1\leq i \leq n-1$, the $\gva$--invariant orbit polynomial associated to $e_i\in\Z^{n-1}$ from \Cref{defi_orbitpoly} is denoted by $\orb{i}=\bigorb{e_i}$. 

One can now show that $\sigma_i(y(x)) \in \Qx^\gva$ is invariant and thus express it explicitly in terms of the fundamental invariants. 

\begin{proposition}\label{OrbitSumsAn}
In $\Qx$, we have
\[
	\sigma_i(y_1(x),\ldots,y_{n}(x))
=	\binom{ n }{ i } \, \orb{i}(x)
\quad(1\leq i\leq n-1)
\tbox{and}
	\sigma_n(y_1(x),\ldots,y_{n}(x))
=	1.
\]
\end{proposition}
\begin{proof}
It follows from \Cref{eq_monomialsA} that $x_i = y_1(x) \ldots y_i(x)$ and $\gva$ acts on the $y_i(x)$ by permutation. Hence,
\[
	\orb{i}(x)
=	\frac{1}{\vert \gva \vert}\,\sum\limits_{B\in\gva} x^{B\,e_i}
=	\dfrac{\nops{\mathrm{Stab}_{\gva}(x_i)}}{\nops{\gva}} \sum\limits_{\substack{J\subseteq \{1,\ldots,n\} \\ \nops{ J} =i}} \prod\limits_{j\in J} y_j(x)
=	\dfrac{1}{\nops{\gva \cdot x_i}} \,\sigma_i(y_1(x),\ldots ,y_n(x)).
\]
With $\nops{\gva \cdot x_i}=\binom{n}{i}$ and $1=y_1(x)\ldots y_n(x)$, we obtain the statement.
\end{proof}

We denote $\TToruss^n:=\{ y \in \TTorus^n \,\mid\, y_1\ldots y_n = 1 \}$. 
The next statement follows immediately from \Cref{eq_monomialsA}. 

\begin{lemma}\label{proposition_PsiAn}
The map
\[
\Upsilon:
\begin{array}[t]{ccl}
	\TTorus^{n-1}	&	\to 	& \TToruss^n,\\
	x 				&	\mapsto	& 	( y_1(x),\ldots,y_{n}(x) ),
\end{array}
\]
is bijective. 
\end{lemma}

\subsection{Hermite characterization with standard monomials}

We now characterize, whether a given point $z$ is contained in the $\TTorus$--orbit space $\Image$ of $\gva$, that is, 
if the equation $\orb{i}(x)=z_i$ has a solution $x\in\TTorus^{n-1}$. 
As shown in previous subsections, this is equivalent to deciding whether a symmetric polynomial system of type $\mathrm{(I)}$ has its solution in $\TTorus^n$. 
We state our main result for $\RootA$ in the standard monomial basis.

\begin{theorem}\label{HermiteCharacterizationAn}
Let $n\geq 3$. 
Define the $(n-1)$--dimensional $\R$--vector space $\mathcal{Z}:=\{ z\in\C^{n-1} \,\vert\,\forall\, 1\leq i \leq n-1:\, \overline{z_i}=z_{n-i} \}$ and the matrix $H \in \RX^{n\times n}$ by
\begin{align*}
H(z)_{ij}	&=	 \trace((C(z))^{i+j-2}-(C(z))^{i+j}),\tbox{where}
C(z)		=
\begin{bmatrix}
0	&	1/2	&			&			&	0	&	-d_n(z)/4		\\
1	&	0	&	\ddots	&			&		&	-d_{n-1}(z)/2	\\
	&	1/2	&	\ddots	&	\ddots	&		&	\vdots			\\
	&		&	\ddots	&	\ddots	&	1/2	&	-d_3(z)/2 		\\
	&		&			&	\ddots	&	0	&	(1-d_2(z))/2	\\
0	&		&			&			&	1/2	&	-d_1(z)/2 
\end{bmatrix} ,
\\
d_\ell(z)	&=	(-1)^{\ell}\,\sum\limits_{i=0}^{\ell} \binom{ n }{ i }\,\binom{ n }{ \ell - i }\,z_i\,z_{n-\ell+i}\bigg\vert_{z_0=z_n=1} \tbox{for} 1\leq \ell \leq n.
\end{align*}
For $z\in\mathcal{Z}$, we have $H(z)\in\R^{n\times n}$ and $\Image = \{z\in\mathcal{Z}\,\vert\,H(z)\succeq 0\}$.
\end{theorem}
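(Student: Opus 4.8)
The plan is to combine the orbit-polynomial formula for type $\mathcal{A}$ (\Cref{OrbitSumsAn}) with the Hermite characterization for solutions of system $\mathrm{(I)}$ in $\TTorus^n$ (\Cref{Corollary_SolutionsSystemI}), using \Cref{proposition_PsiAn} to pass between the torus $\TTorus^{n-1}$ on which $\bigcos$ is defined and the ``special'' torus $\TToruss^n$ where the symmetric system lives. First I would fix $z\in\C^{n-1}$ and introduce the monic polynomial $p_z = x^n + c_1 x^{n-1} + \dots + c_{n-1} x + c_n \in \Ct$ whose coefficients are $c_i = (-1)^i\binom{n}{i} z_i$ for $1\le i\le n-1$ and $c_n = (-1)^n$ (equivalently $c_0 = 1$). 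The constraint $c_n = (-1)^n$ encodes exactly the condition $y_1\cdots y_n = 1$ defining $\TToruss^n$, because by Vieta's formula (\Cref{ESPCoefficients}) the product of the roots of $p_z$ is $(-1)^n c_n = 1$.

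The key steps, in order: (1) Show that $z\in\Image$ if and only if there exists $x\in\TTorus^{n-1}$ with $z_i = \orb{i}(x)$ for all $i$. By \Cref{OrbitSumsAn}, setting $y = \psi(x)$ (the monomial vector $y_1 = x_1$, $y_k = x_k x_{k-1}^{-1}$, $y_n = x_{n-1}^{-1}$), this says $\sigma_i(y) = (-1)^i c_i$ for $1\le i\le n-1$, i.e. $y$ solves system $\mathrm{(I)}$ with these $c_i$; and automatically $y_1\cdots y_n = 1$, i.e. $\sigma_n(y) = 1 = (-1)^n c_n$. So $y$ is the solution of the full system $\mathrm{(I)}$ in the sense of \Cref{Correspondence2S}. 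By \Cref{proposition_PsiAn}, $x\in\TTorus^{n-1}$ iff $\psi(x) = y \in \TToruss^n$, i.e. $y\in\TTorus^n$ (the extra product-$1$ constraint being automatic). Conversely, any solution $y\in\TToruss^n$ of system $\mathrm{(I)}$ with these coefficients arises as $\psi(x)$ for a unique $x\in(\C^*)^{n-1}$, and $x\in\TTorus^{n-1}$. Hence $z\in\Image$ iff the solution of system $\mathrm{(I)}$ (for these $c_i$) lies in $\TTorus^n$. (2) Verify the hypotheses of \Cref{Corollary_SolutionsSystemI}: we need $\overline{c_i} = (-1)^n c_{n-i}$ for $1\le i\le n-1$. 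Since $c_i = (-1)^i\binom{n}{i}z_i$ and $\binom{n}{i} = \binom{n}{n-i}$, this reads $(-1)^i\binom{n}{i}\overline{z_i} = (-1)^n(-1)^{n-i}\binom{n}{i}z_{n-i} = (-1)^i\binom{n}{i}z_{n-i}$, i.e. $\overline{z_i} = z_{n-i}$ — precisely the defining condition of $W$. So for $z\in W$ the hypotheses hold, and \Cref{Corollary_SolutionsSystemI} gives: the solution lies in $\TTorus^n$ iff the matrix $H$ with entries $\trace(C^{i+j-2} - C^{i+j})$ is positive semi-definite, where $C$ is the Chebyshev companion-type matrix built from $d_\ell = \sum_{i=0}^\ell c_i\overline{c_{\ell-i}}$. (3) Unwind $d_\ell$ in terms of $z$: substitute $c_0 = 1$, $c_i = (-1)^i\binom{n}{i}z_i$, $\overline{c_{\ell-i}} = (-1)^{\ell-i}\binom{n}{\ell-i}\overline{z_{\ell-i}} = (-1)^{\ell-i}\binom{n}{\ell-i}z_{n-\ell+i}$ (using $z\in W$), so each term contributes a sign $(-1)^\ell$; separating out $i=0$ and $i=\ell$ gives the $\binom{n}{\ell}(z_\ell + z_{n-\ell})$ piece and the middle sum gives $\sum_{i=1}^{\ell-1}\binom{n}{i}\binom{n}{\ell-i}z_i z_{n-\ell+i}$, matching the stated $d_\ell(z)$ for $1\le\ell\le n-1$; for $\ell = n$ one gets $d_n = c_0\overline{c_n} + c_n\overline{c_0} + \sum_{i=1}^{n-1}c_i\overline{c_{n-i}} = 2(-1)^n + \sum_{i=1}^{n-1}\binom{n}{i}^2 z_i^2$ (here $\overline{c_{n-i}} = (-1)^{n-i}\binom{n}{i}z_i$, so $c_i\overline{c_{n-i}} = \binom{n}{i}^2 z_i^2$ and the sign works out to $(-1)^n$ overall), giving $d_n(z) = (-1)^n(2 + \sum\binom{n}{i}^2 z_i^2)$. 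Finally, check $H(z)$ has real entries for $z\in W$: each $d_\ell(z)$ is real because $z\in W$ forces $\overline{z_\ell + z_{n-\ell}} = z_{n-\ell} + z_\ell$ and $\overline{z_i z_{n-\ell+i}} = z_{n-i} z_{\ell-i}$, and reindexing $i\mapsto \ell-i$ shows the middle sum is self-conjugate; likewise $\sum\binom{n}{i}^2 z_i^2$ is real by the same reindexing $i\mapsto n-i$. Hence $C(z)$ and $H(z)$ are real.

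I expect the main obstacle to be step (1): carefully tracking the bookkeeping between the three tori $\TTorus^{n-1}$, $\TToruss^n$, $\TTorus^n$ and making sure the product-one constraint $\sigma_n(y) = 1$ is consistently identified with $c_n = (-1)^n$, so that the "$n$-th equation" of system $\mathrm{(I)}$ is not an extra hypothesis but is forced by the geometry. One must also be careful that \Cref{Corollary_SolutionsSystemI} as stated requires the solution of the *full* $n$-variable system $\mathrm{(I)}$ to be tracked, and that the "solution up to permutation" language of \Cref{Correspondence2S} is compatible with $\orb{i}$ being symmetric functions of the $y_j$'s — i.e. the value of $z$ determines and is determined by the multiset $\{y_1,\dots,y_n\}$. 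Steps (2) and (3) are routine binomial/sign manipulations once the normalization $c_i = (-1)^i\binom{n}{i}z_i$ is in place; the reality of $H(z)$ on $W$ is then immediate. The argument is essentially a dictionary between the invariant-theoretic side (via \Cref{OrbitSumsAn}, \Cref{theorem_OrbitSperation}, \Cref{proposition_PsiAn}) and the univariate-real-roots side (via \Cref{SolutionsSystemI}, \Cref{HermiteCharacterization}, \Cref{Corollary_SolutionsSystemI}).
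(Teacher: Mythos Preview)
Your proposal is correct and follows essentially the same approach as the paper: define $c_i = (-1)^i\binom{n}{i}z_i$ with $c_0 = (-1)^n c_n = 1$, use \Cref{OrbitSumsAn} and \Cref{proposition_PsiAn} to identify $z\in\Image$ with the solution of system $\mathrm{(I)}$ lying in $\TToruss^n\subseteq\TTorus^n$, and then invoke \Cref{Corollary_SolutionsSystemI} under the hypothesis $z\in W$ (which is exactly the palindromy condition $\overline{c_i}=(-1)^n c_{n-i}$). The only point you leave slightly implicit is that $\Image\subseteq W$, which the paper handles by noting $-\fweight{j}\in\WeylA\,\fweight{n-j}$ so that $\overline{\orb{j}(x)}=\orb{n-j}(x)$; you should state this explicitly in step~(1) so that your biconditional in step~(2) yields $\Image=\{z\in W: H(z)\succeq 0\}$ rather than merely $\Image\cap W=\{z\in W: H(z)\succeq 0\}$.
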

\begin{proof}
For $z\in \C^{n-1}$, define $c_0:=1$, $c_n:=(-1)^n$, $c_i:=(-1)^i\,\binom{n}{i}\,z_i\in\C$ for $1\leq i \leq n-1$ as well as $d_\ell:=d_\ell(z)$ for $1\leq \ell \leq n$.

To show ``$\subseteq$'', assume that $z\in\Image$ and fix $ x\in\TTorus^{n-1}$, such that $\orb{i}( x)=z_i$. By \Cref{OrbitSumsAn} and \Cref{proposition_PsiAn}, the unique solution of
\[
\mathrm{(I)}  \quad  \sigma_i(y_1,\ldots,y_n) = (-1)^i\,c_i \tbox{for} 1\leq i\leq n
\]
is $y=\Upsilon( x)\in\TToruss^n$. 
Note that $\orb{j}( x)$ and $\orb{n-j}( x)$ are complex conjugates, because $-\fweight{j}\in\weyl\, \fweight{n-j}$. Therefore, $z\in \mathcal{Z}$ and $d_\ell=\sum_{i=0}^\ell c_i\,\overline{c_{\ell-i}}\in\R$ yields the last column of $C(z)$. \Cref{Corollary_SolutionsSystemI} gives us $H(z)\succeq 0$.

For ``$\supseteq$'' on the other hand, assume $z\in \mathcal{Z}$ with $H(z)\succeq 0$. By \Cref{Corollary_SolutionsSystemI}, the solution $y$ of system $(\mathrm{I})$ is contained in $\TToruss^n$. Let $ x\in\TTorus^{n-1}$ be the unique preimage of $ y$ under $\Upsilon$. Then by \Cref{OrbitSumsAn},
\[
	z_i=(-1)^i\,\binom{ n }{ i }^{-1}\,c_i=\binom{ n }{ i }^{-1}\,\sigma_i(y_1,\ldots,y_n)=\orb{i}( x)
\]
for $1\leq i\leq n-1$ and so $z=\bigcos(x)\in\Image$.
\end{proof}

For $n=2$, we are in the univariate case with $\Image=[-1,1]$.

\subsection*{Example: $\RootA[2]$}

We study the root system from \Cref{example_rootsystemA2} with Weyl group $\weyl\cong\mathfrak{S}_3$. 
Since $\weyl\fweight{1}=\{\fweight{1},\fweight{2}-\fweight{1},-\fweight{2}\}$ and $\weyl\fweight{2}=\{\fweight{2},\fweight{1}-\fweight{2},-\fweight{1}\}$, the integer representation of $\weyl$ is
\[
	\gva
=	\left\{
	\begin{bmatrix} 1 & 0 \\ 0 & 1 \end{bmatrix},
	\begin{bmatrix} -1 & 0 \\ 1 & 1 \end{bmatrix},
	\begin{bmatrix} 1 & 1 \\ 0 & -1 \end{bmatrix},
	\begin{bmatrix} -1 & -1 \\ 1 & 0 \end{bmatrix},
	\begin{bmatrix} 0 & 1 \\ -1 & -1 \end{bmatrix},
	\begin{bmatrix} 0 & -1 \\ -1 & 0 \end{bmatrix}
	\right\}.
\]
Let $z_1,z_2\in\R$ and $z=(z_1+\mathrm{i} z_2,z_1-\mathrm{i} z_2)$. 
The matrix $C\in \RX^{3\times 3}$ from \Cref{HermiteCharacterizationAn} is
\[
C(z)
=
\begin{bmatrix}
0	&	1/2	&	(1+9\,z_1^2-9\,z_2^2)/2				\\
1	&	0	&	(1- 9\,z_1^2-9\,z_2^2-6\,z_1)/2	\\
0	&	1/2	&	3\,z_1
\end{bmatrix}.
\]
Define the matrix $H\in\RX^{3\times 3}$ with entries $(H(z))_{ij} = \trace(C(z)^{i+j-2})-\trace(C(z)^{i+j})$. Then $z \in \Image$, or equivalently $(z_1,z_2)\in\Image_\R$, if and only if $H(z)$ is positive semi--definite. Assume that
\[
\det(x\,I_3-H(z))=x^3-h_1(z)\,x^2+h_2(z)\,x-h_3(z)
\]
is the characteristic polynomial of $H(z)$, where
\[
\begin{array}{rcl}
	\textcolor{red}{h_3(z)}&\textcolor{red}{=}			&	\textcolor{red}{-\coeff(x^0,\det(x\,I_3-H(z)))} \\
	&\textcolor{red}{=}					&	\textcolor{red}{2187/64\,z_2^4\,(3\,z_1 + 1)^2\,(-3\,z_1^4 - 6\,z_1^2\,z_2^2 - 3\,z_2^4 + 8\,z_1^3 - 24\,z_1\,z_2^2 - 6\,z_1^2 - 6\,z_2^2 + 1)} \\
	\textcolor{blue}{h_2(z)}&\textcolor{blue}{=}	&	\phantom{-} \textcolor{blue}{\coeff(x^1,\det(x\,I_3-H(z)))} \\
	&\textcolor{blue}{=}					&	\textcolor{blue}{243/256\,z_2^2\,(-243\,z_1^8 - 972\,z_1^6\,z_2^2 - 1458\,z_1^4\,z_2^4 - 972\,z_1^2\,z_2^6 - 243\,z_2^8 + 324\,z_1^7 - 1620\,z_1^5\,z_2^2 }\\
	&									&	\textcolor{blue}{- 4212\,z_1^3\,z_2^4 - 2268\,z_1\,z_2^6 - 432\,z_1^6 - 2052\,z_1^4\,z_2^2 - 5400\,z_1^2\,z_2^4 - 324\,z_2^6 + 180\,z_1^5 - 3384\,z_1^3\,z_2^2 } \\
	&									&	\textcolor{blue}{- 684\,z_1\,z_2^4 + 18\,z_1^4 - 804\,z_1^2\,z_2^2 + 42\,z_2^4 + 76\,z_1^3 + 404\,z_1\,z_2^2 - 8\,z_1^2 - 108\,z_2^2 + 60\,z_1 + 25)}, \\
	\textcolor{OliveGreen}{h_1(z)}&\textcolor{OliveGreen}{=}	&	\textcolor{OliveGreen}{-\coeff(x^2,\det(x\,I_3-H(z)))} \\
	&\textcolor{OliveGreen}{=}			&	\textcolor{OliveGreen}{1/32\,(-729\,z_1^6 + 1458\,z_1^5 + (10935\,z_2^2 - 1215)\,z_1^4 + (-2916\,z_2^2 + 540)\,z_1^3 + 351\,z_2^2 + 63}\\
	&									&	\textcolor{OliveGreen}{+ (-10935\,z_2^4 + 1458\,z_2^2 - 135)\,z_1^2 + (-4374\,z_2^4 + 972\,z_2^2 + 18)\,z_1 + 729\,z_2^6 - 1215\,z_2^4)}.
\end{array}
\]

\begin{figure}[H]
\begin{center}
	\begin{subfigure}{.4\textwidth}
		\centering
		\includegraphics[width=6cm]{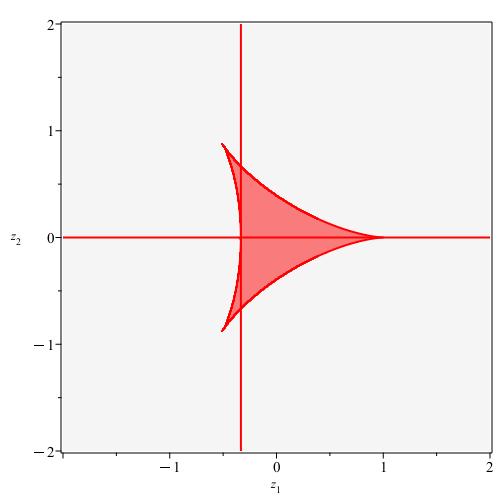}
		\caption{\textcolor{red}{$h_3(z)\geq 0$}}
	\end{subfigure}%
	\begin{subfigure}{.4\textwidth}
		\centering
		\includegraphics[width=6cm]{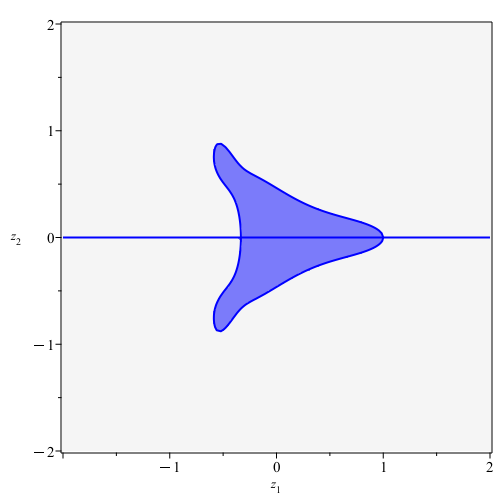}
		\caption{\textcolor{blue}{$h_2(z)\geq 0$}}
	\end{subfigure}
	\begin{subfigure}{.4\textwidth}
		\centering
		\includegraphics[width=6cm]{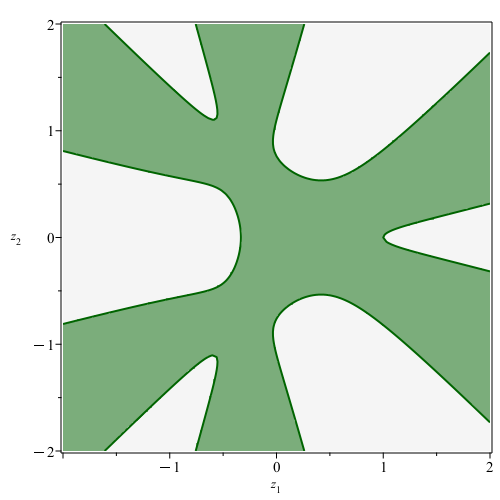}
		\caption{\textcolor{OliveGreen}{$h_1(z)\geq 0$}}
	\end{subfigure}
	\begin{subfigure}{.4\textwidth}
		\centering
		\includegraphics[width=6cm]{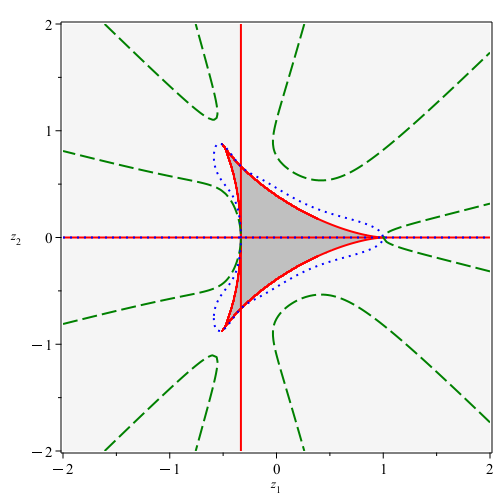}
		\caption{\textcolor{gray}{$h_1(z),h_2(z),h_3(z)\geq 0$}}
	\end{subfigure}
\caption{The intersection of the semi--algebraic sets defined by the coefficients of the characteristic polynomial of $H(z)$ is the $\TTorus$--orbit space associated to $\RootA[2]$.}
\label{example_figureA2}
\end{center}
\end{figure}

The matrix $H(z)$ is positive semi--definite if and only if $h_i(z)\geq 0$ for $1\leq i \leq 3$.
In \Cref{example_figureA2}, a solid red line, blue dots and green dashes indicate the varieties of these three polynomials. 
From those plots, we suspect that the real $\TTorus$--orbit space is invariant under rotation by $2\pi/3$ and horizontal reflection, that is, under the dihedral group $\mathfrak{D}_{3}$ of order $6$. 
The $\mathfrak{D}_{3}$--invariants are $\RX^{\mathfrak{D}_3}=\R[g_1,g_2]$ with $g_1(z):=z_1^2+z_2^2$ and $g_2(z):=z_1\,(z_1^2-3\,z_2^2)$. Hence, we have
\[
	\textcolor{red}{h_3(z)=}
	\underbrace{\textcolor{red}{2187/64\,z_2^4\,(3\,z_1 + 1)^2}}_{\geq 0} \,
	\underbrace{\textcolor{red}{( -6\,g_1(z) -3\,g_1(z)^2 +8\,g_2(z) +1 )}}_{\mathfrak{D}_3\mbox{--invariant}} . 
\]
and the variety of the $\mathfrak{D}_3$--invariant factor is the boundary of $\Image_\R$. 
We now identify the vertices of $\Image_\R$, which correspond to the fundamental weights and the origin. With $\bigcos_\R=\left( \frac{\orb{1}+\orb{2}}{2} , \frac{\orb{1}-\orb{2}}{2\mathrm{i}} \right)$, those are
\begin{align*}
&	\mathrm{Vertex}_1
:=	\bigcos_{\R}(\mexp{\fweight{1},\fweight{1}},\mexp{\fweight{2},\fweight{1}})
=	\bigcos_{\R}\left(\exp\left(-\dfrac{4}{3}\pi\mathrm{i}\right),\exp\left(-\dfrac{2}{3}\pi\mathrm{i}\right)\right)
=	\left(-\dfrac{1}{2},-\dfrac{\sqrt{3}}{2}\right), \\
&	\mathrm{Vertex}_2
:=	\bigcos_{\R}(\mexp{\fweight{1},\fweight{2}},\mexp{\fweight{2},\fweight{2}})
=	\bigcos_{\R}\left(\exp\left(-\dfrac{2}{3}\pi\mathrm{i}\right),\exp\left(-\dfrac{4}{3}\pi\mathrm{i}\right)\right)
=	\left(-\dfrac{1}{2},\phantom{-}\dfrac{\sqrt{3}}{2}\right), \\
&	\mathrm{Vertex}_3
:=	\bigcos_{\R}(\mexp{\fweight{1},0},\mexp{\fweight{2},0})
=	\bigcos_{\R}(1,1)
=	(1,0) 
	\, .  \phantom{\dfrac{\sqrt{1}}{3}}
\end{align*}

For a generic point $(z_1,z_2)\in \R^2$, $H(z)$ has rank $3$. 
The variety ``$h_1=h_2=h_3=0$'' corresponds to rank $0$ Hermite matrices $H(z)$ and contains $\mathrm{Vertex}_3$ as well as
\[
	\bigcos_{\R}(\exp(-2\pi\mathrm{i}\langle \fweight{1},(\fweight{1}+\fweight{2})/2 \rangle),\exp(-2\pi\mathrm{i}\langle \fweight{2},(\fweight{1}+\fweight{2})/2 \rangle))
=	\bigcos_{\R}(\exp(-\pi\mathrm{i}),\exp(\pi\mathrm{i}))
=	\left(-\dfrac{1}{3},0\right) \, .
\]
The varieties ``$h_2=h_3=0$'', containing $\mathrm{Vertex}_1$ and $\mathrm{Vertex}_2$, and ``$h_1=h_3=0$'', containing $(-1/3,\pm 2/3)$, determine rank $1$ Hermite matrices. 
Every other point on the boundary of $\Image_\R$ corresponds to a rank $2$ Hermite matrix.


\section{Type $\RootC$}
\label{section_proofC}
\setcounter{equation}{0}

\begin{figure}[H]
	\begin{center}
		\begin{overpic}[width=0.3\textwidth,grid=false,tics=10]{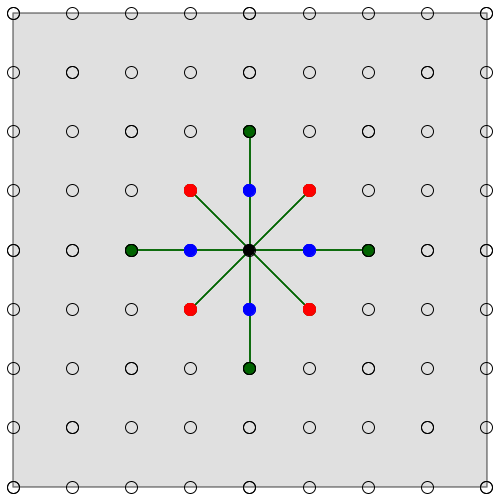}
			\put (63,52) {\large \textcolor{blue}{$\displaystyle \fweight{1}$}}
			\put (63,65) {\large \textcolor{red}{$\displaystyle \fweight{2}$}}
		\end{overpic}
		\quad
		\begin{overpic}[width=0.59\textwidth,grid=false,tics=10]{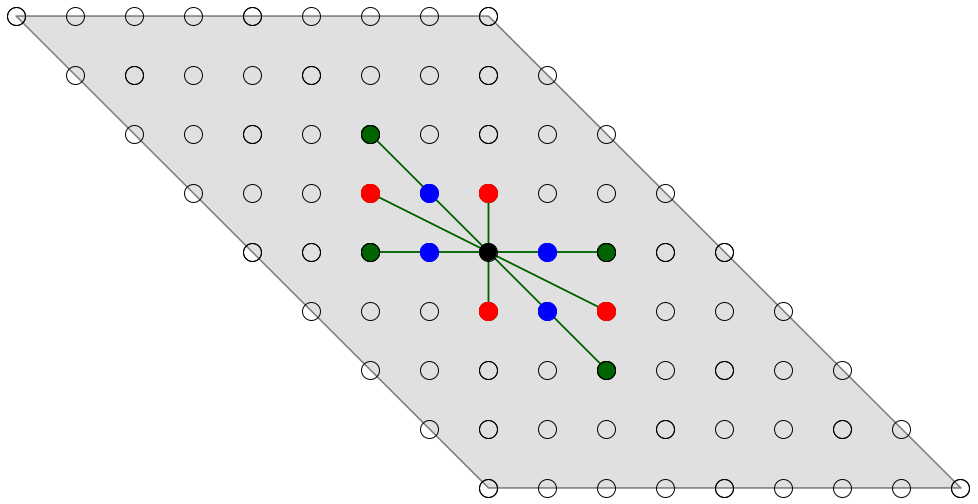}
			\put (57,27) {\large \textcolor{blue}{$\displaystyle e_{1}$}}
			\put (51,33) {\large \textcolor{red}{$\displaystyle e_{2}$}}
		\end{overpic}
	\end{center}
	\caption{The \textcolor{OliveGreen}{root system} $\RootC[2]$ and its weight lattice in the usual orthogonal representation and the integer representation. The orbits of the fundamental weights are the \textcolor{blue}{blue} and \textcolor{red}{red} lattice elements.}
	\label{example_rootsystemC2}
\end{figure}


The groups $\mathfrak{S}_{n}$ and $\{\pm 1\}^n$ act on $\R^n$ by permutation of coordinates and multiplication of coordinates by $\pm 1$. The root system $\RootC$ given in \cite[Planche III]{bourbaki456} is a root system in $\R^n$ with fundamental weights
\begin{equation}\label{equation_WeightsRootsC}
\fweight{i}=e_1 + \ldots + e_i \quad(1\leq i\leq n).
\end{equation}
The weight lattice is $\Weights=\Z^n$. 
The Weyl group $\weyl \cong \mathfrak{S}_{n}\ltimes\{\pm 1\}^n$ of $\RootC$ is orthogonal with respect to the Euclidean scalar product. 
We have $-I_n\in\weyl$ and thus $-\fweight{i}\in\weyl\,\fweight{i}$. 
Furthermore, the orbit $\weyl\,\fweight{i}$ has cardinality $2^i\,\binom{n}{i}$ for $1\leq i \leq n$, see \Cref{example_rootsystemC2}. 

In this section, we give a closed formula for the matrix polynomial from \Cref{MainThmIntro} in the standard monomial basis for $\RootC$. This is a root system of rank $n$. Hence, the ring of Laurent polynomials is $\Rx=\Q[x_1,x_1^{-1},\ldots, x_{n},x_{n}^{-1}]$ and the polynomial ring is $\RX=\Q[z_1,\ldots,z_{n}]$.

\subsection{Orbit polynomials}

We denote by $\gva\in\mathrm{GL}_{n}(\Z)$ the integer representation of $\weyl$ with respect to the fundamental weights in \Cref{equation_WeightsRootsC}. Then the orbit $\gva \cdot x_1 = \{ x^{B e_1} \,\vert\, B\in \gva \} \subseteq \Rx$ consists of $2n$ distinct monomials, namely
\begin{equation}\label{eq_monomialsC}
	y_1:=x_1 ,\quad y_2:=x_2\,x_{1}^{-1} ,\quad \ldots ,\quad y_n:=x_{n}\,x_{n-1}^{-1}
\end{equation}
and their inverses. For $1\leq i\leq n$, let $\sigma_i$ be the $i$--th elementary symmetric function in $n$ indeterminates and recall that $\orb{i}=\bigorb{e_i}$ is the $\gva$--invariant orbit polynomial associated to $e_i\in\Z^{n}$ from \Cref{defi_orbitpoly}.

\begin{proposition}\label{OrbitSumsCn}
In $\Qx$, we have
\[
\sigma_i\left(\frac{y_1(x)+y_1(x)^{-1}}{2},\ldots,\frac{y_n(x)+y_n(x)^{-1}}{2}\right) = \binom{ n }{ i } \, \orb{i}(x)\quad(1\leq i\leq n).
\]
\end{proposition}
\begin{proof}
It follows from \Cref{eq_monomialsC} that $x_i = y_1(x) \ldots y_i(x)$ and $\gva$ acts on the $y_i^{\pm 1}(x)$ by permutation. Hence,
\[
	\orb{i}(x)
=	\frac{1}{\vert \gva \vert}\,\sum\limits_{B\in\gva} x^{B\,e_i}
=	\frac{\vert\mathrm{Stab}_{\gva}(x_i)\vert}{\vert \gva \vert}\,\sum\limits_{\substack{J \subseteq \{1 ,\ldots, n\} \\ \vert J \vert = i }} \, \sum\limits_{\delta\in\{\pm 1\}^J} \, \prod_{j\in J} y_j(x)^{\delta_j}
=	\frac{1}{\nops{\gva\cdot x_i}} \, \sum\limits_{\substack{J \subseteq \{1 ,\ldots, n\} \\ \vert J \vert = i }} \, \prod_{j\in J} ( y_j(x)+y_j(x)^{-1} )
\]
With $\nops{\gva\cdot x_i}=2^i\,\binom{n}{i}$, we obtain the statement.
\end{proof}

The next statement follows immediately from \Cref{eq_monomialsC}. 

\begin{lemma}\label{proposition_PsiC}
The map
\[
\Upsilon:
\begin{array}[t]{ccl}
	\TTorus^{n}	&	\to 	& \TTorus^n,\\
	x 			&	\mapsto	& 	( y_1(x),\ldots,y_{n}(x) ),
\end{array}
\]
is bijective. 
\end{lemma}

\subsection{Hermite characterization with standard monomials}

We now characterize, whether a given point $z$ is contained in the $\TTorus$--orbit space $\Image$ of $\gva$, that is, 
if the equation $\orb{i}(x)=z_i$ has a solution $x\in\TTorus^n$. 
As shown in previous subsections, this is equivalent to deciding whether a symmetric polynomial system of type $\mathrm{(II)}$ has its solution in $\TTorus^n$. 
We state our main result for $\RootC$ in the standard monomial basis.

\begin{theorem}\label{HermiteCharacterizationCn}
Define the matrix $H\in\RX^{n\times n}$ by
\begin{align*}
H(z)_{ij}	&=	\trace(C(z)^{i+j-2}-C(z)^{i+j}),
\tbox{where}
C(z)		=
\begin{bmatrix}
0		&	\cdots	&	0		&	-c_n(z)			\\
1		&			&	0		&	-c_{n-1}(z)		\\
		&	\ddots	&			&	\vdots			\\
0		&			&	1		&	-c_1(z)
\end{bmatrix},\\
c_i(z)		&=	(-1)^i\,\binom{n}{i}\,z_i \tbox{for} 1\leq i\leq n.
\end{align*}
Then $\Image=\Image_\R=\{ z\in \R^n \,\vert\, H(z)\succeq 0 \}$.
\end{theorem}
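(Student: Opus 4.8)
The plan is to follow the template of the proof of \Cref{HermiteCharacterizationAn}, which here is simpler on two counts: since $-I_n\in\WeylC$ we already know $\Image\subseteq[-1,1]^n\subseteq\R^n$, so no auxiliary real subspace is needed; and the map $\psi$ of \Cref{proposition_PsiCn} is a genuine bijection (restricting to a bijection of $\TTorus^n$ with itself), not merely an injection. Throughout, for $z\in\R^n$ I set $c_i:=c_i(z)=(-2)^i\binom{n}{i}z_i\in\R$ and $p:=x^n+c_1\,x^{n-1}+\ldots+c_n\in\Rt$, and I will use that the displayed matrix $C(z)$ is exactly the companion matrix of $p$, i.e. the matrix of the multiplication $m_x$ on $\Rt/\langle p\rangle$ in the basis $\{1,x,\ldots,x^{n-1}\}$ (immediate from $x\cdot x^i=x^{i+1}$ and $x\cdot x^{n-1}\equiv -c_1\,x^{n-1}-\ldots-c_n$ modulo $\langle p\rangle$). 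Consequently $H(z)$ is precisely the matrix of the Hermite form $H_q(p)$ of \Cref{HermiteCharacterization} for $a=2$ (where $q=(2-x)(2+x)=4-x^2$), so it is real symmetric (in fact Hankel, since $H(z)_{ij}$ depends only on $i+j$) for every $z\in\R^n$, and \Cref{Corollary_SolutionsSystemII} applies verbatim: $H(z)\succeq 0$ iff the solution of system $\mathrm{(II)}$ with these coefficients lies in $\TTorus^n$.

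For the inclusion $\Image\subseteq\{z\in\R^n\mid H(z)\succeq 0\}$, I would take $z\in\Image$ and $x\in\TTorus^n$ with $\bigcos(x)=z$, so that $\orb{i}(x)=z_i$. Writing $y_1,\ldots,y_n$ for the monomials of \Cref{OrbitSumsCn}, the point $y:=\psi(x)=(y_1(x),\ldots,y_n(x))$ lies in $\TTorus^n$ (being a tuple of Laurent monomials evaluated on $\TTorus^n$), and \Cref{OrbitSumsCn} gives $\sigma_i(y_1+y_1^{-1},\ldots,y_n+y_n^{-1})=2^i\binom{n}{i}z_i=(-1)^i\,c_i$ for $1\le i\le n$. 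Thus $y$ is the solution of system $\mathrm{(II)}$ with coefficients $c_1,\ldots,c_n$, and since $y\in\TTorus^n$, \Cref{Corollary_SolutionsSystemII} yields $H(z)\succeq 0$.

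For the reverse inclusion, I would start from $z\in\R^n$ with $H(z)\succeq 0$; by \Cref{Corollary_SolutionsSystemII} the solution $y\in(\C^*)^n$ of system $\mathrm{(II)}$ with coefficients $c_i$ lies in $\TTorus^n$. By \Cref{proposition_PsiCn} there is a unique $x:=\psi^{-1}(y)\in\TTorus^n$, and then $y_k(x)=y_k$ for all $k$, so \Cref{OrbitSumsCn} gives
\[
\orb{i}(x)=2^{-i}\binom{n}{i}^{-1}\sigma_i(y_1+y_1^{-1},\ldots,y_n+y_n^{-1})=2^{-i}\binom{n}{i}^{-1}(-1)^i\,c_i=z_i ,
\]
hence $z=\bigcos(x)\in\Image$.

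I do not expect a real obstacle here: the two substantive ingredients — the explicit orbit-polynomial identity \Cref{OrbitSumsCn} and the Hermite criterion \Cref{Corollary_SolutionsSystemII} (itself resting on \Cref{HermiteCharacterization} and the Sylvester/Sturm theorem) — are already in hand, and what remains is bookkeeping with the signs and binomial factors linking $\orb{i}$, $\sigma_i$ and $c_i$. The one point I would be careful to spell out explicitly is the identification of the displayed $C(z)$ with the companion matrix of $p$ in the monomial basis, since it is exactly this that makes \Cref{Corollary_SolutionsSystemII} directly applicable.
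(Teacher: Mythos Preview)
Your proposal is correct and follows essentially the same approach as the paper's own proof: both directions use \Cref{OrbitSumsCn} to identify the orbit polynomials with elementary symmetric functions of $y_k+y_k^{-1}$, the bijection $\psi$ of \Cref{proposition_PsiCn} to pass between $x\in\TTorus^n$ and $y\in\TTorus^n$, and \Cref{Corollary_SolutionsSystemII} for the Hermite criterion. Your added remarks (that $C(z)$ is the companion matrix of $p$ in the monomial basis and that $H(z)$ is Hankel) are helpful elaborations but do not change the argument.
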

\begin{proof}
Let $z\in\R^n$ and set $c_i:=c_i(z)\in\R$ for $1\leq i \leq n$.

To show ``$\subseteq$'', assume that $z\in\Image$. Then there exists $ x\in\TTorus^{n}$, such that $\orb{i}( x)=z_i$ for $1\leq i \leq n$. By \Cref{OrbitSumsCn} and \Cref{proposition_PsiC}, the solution of the symmetric polynomial system
\[
\mathrm{(II)}  \quad  \sigma_i\left(\frac{y_1+y_1^{-1}}{2}	,\ldots,\frac{y_n+y_n^{-1}}{2}	\right) = (-1)^i\,c_i \quad (1\leq i\leq n)
\]
is $y=\Upsilon( x)\in\TTorus^n$. 
Applying \Cref{Corollary_SolutionsSystemII} yields $H(z)\succeq 0$.

For ``$\supseteq$'' on the other hand, assume $H(z)\succeq 0$. By \Cref{Corollary_SolutionsSystemII}, the solution $y$ of the above system $(\mathrm{II})$ is contained in $\TTorus^n$. Let $ x\in\TTorus^{n}$ be the unique preimage of $ y$ under $\Upsilon$. Then $z_i=\orb{i}( x)$ and so $z=\bigcos(x)$ is contained in $\Image$.
\end{proof}

\subsection*{Example: $\RootC[2]$}

We study the root system from \Cref{example_rootsystemC2} with Weyl group $\weyl\cong\mathfrak{S}_2 \ltimes \{\pm 1\}^2$. 
Since $\weyl\fweight{1}=\{\fweight{1},-\fweight{1},\fweight{1}-\fweight{2},\fweight{2}-\fweight{1}\}$ and $\weyl\fweight{2}=\{\fweight{2},-\fweight{2},2\fweight{1}-\fweight{1},\fweight{2}-2\fweight{1}\}$, the integer representation of $\weyl$ is
\[
\gva
=	\left\{
\begin{bmatrix} 1 & 0 \\ 0 & 1 \end{bmatrix},
\begin{bmatrix} -1 & 0 \\ 1 & 1 \end{bmatrix},
\begin{bmatrix} 1 & 2 \\ 0 & -1 \end{bmatrix},
\begin{bmatrix} 1 & 2 \\ -1 & -1 \end{bmatrix},
\begin{bmatrix} -1 & -2 \\ 1 & 1 \end{bmatrix},
\begin{bmatrix} -1 & -2 \\ 0 &  1 \end{bmatrix},
\begin{bmatrix} 1 & 0 \\ -1 & -1 \end{bmatrix},
\begin{bmatrix} -1 & 0 \\ 0 & -1 \end{bmatrix}
\right\}.
\]
Let $z=(z_1,z_2) \in\R^2$. The matrix $C\in \RX^{2\times 2}$ from \Cref{HermiteCharacterizationCn} is
\[
C(z)=
\begin{bmatrix}
0	&	-z_2 \\
1	&	2 \,z_1 \\
\end{bmatrix}.
\]
Then $z$ is contained in $\Image$ if and only if the resulting Hermite matrix
\[
H(z)=
\begin{bmatrix}
-4\, z_1^2 +  2\,z_2 + 2 & -8 \,z_1^3 + 6\, z_1 \,z_2 + 2 \,z_1 \\
-8 \,z_1^3 + 6\, z_1 \,z_2 + 2 \,z_1 & -16\,z_1^4 + 16\,z_1^2\,z_2 + 4\,z_1^2 - 2\,z_2^2 - 2\,z_2
\end{bmatrix}
\]
is positive semi--definite, which is equivalent to its determinant and trace
\[
\begin{array}{rcl}
	\textcolor{red}{ \det (H(z))}		&	\textcolor{red}{ =}	&	\textcolor{red}{-4\,(z_1^2 - z_2)\,(2\,z_1 + 1 + z_2)\,(2\,z_1 - 1 - z_2)} \\
	\textcolor{blue}{ \trace (H(z))}	&	\textcolor{blue}{=}	&	\textcolor{blue}{-16\,z_1^4 + 16\,z_1^2\,z_2 - 2\,z_2^2 + 2}
\end{array}
\]
being nonnegative. The varieties of these two polynomials in $z_1,z_2$ are depicted in \Cref{example_figureC2}.
\begin{figure}[H]
\begin{center}
	\begin{subfigure}{.3\textwidth}
		\centering
		\includegraphics[width=4.7cm]{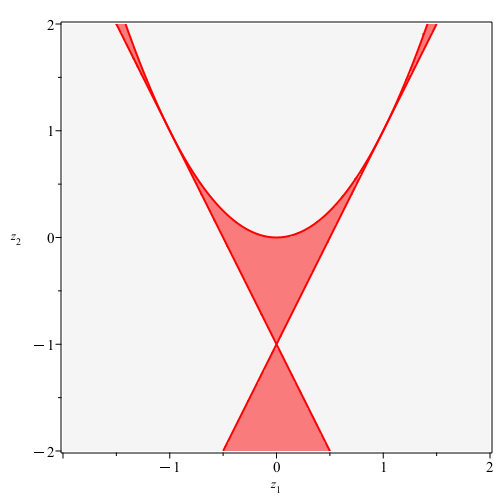}
		\caption{\textcolor{red}{$\det (H(z))\geq 0$}}
	\end{subfigure}%
	\begin{subfigure}{.3\textwidth}
		\centering
		\includegraphics[width=4.7cm]{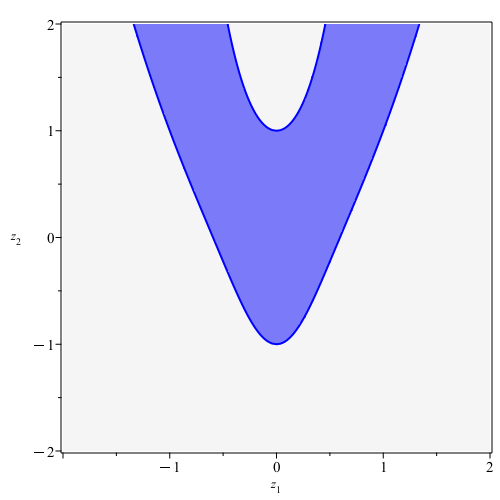}
		\caption{\textcolor{blue}{$\trace (H(z))\geq 0$}}
	\end{subfigure}
	\begin{subfigure}{.3\textwidth}
		\centering
		\includegraphics[width=4.7cm]{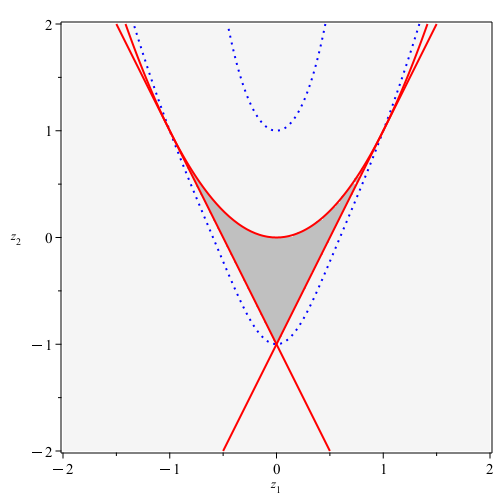}
		\caption{\textcolor{gray}{$\det (H(z)),\trace (H(z))\geq 0$}}
	\end{subfigure}
\caption{The semi--algebraic sets defined by the determinant and trace of $H(z)$.}
\label{example_figureC2}
\end{center}
\end{figure}
We observe three intersections of ``$\det (H(z)) =0$'' (red solid line) and ``$\trace (H(z)) =0$'' (blue dots) in the the vertices
\begin{align*}
&	\mathrm{Vertex}_1
:=	\bigcos(\mexp{\fweight{1},\fweight{1}/2},\mexp{\fweight{2},\fweight{1}/2})=\bigcos(-1,-1)
=	(0,-1), \\
&	\mathrm{Vertex}_2
:=	\bigcos(\mexp{\fweight{1},\fweight{2}/2},\mexp{\fweight{2},\fweight{2}/2})
=	\bigcos(-1,1)
=	(-1,1), \\
&	\mathrm{Vertex}_3
:=	\bigcos(\mexp{\fweight{1},0},\mexp{\fweight{2},0})
=	\bigcos(1,1)
=	(1,1).
\end{align*}
The shape of this domain is dictated by the determinant, but from the positivity condition one can observe that the trace is also required. Alternatively, the inequality given by the trace could be replaced by the constraint that the orbit space is contained in the square $[-1,1]^2$.

\section{Type $\RootB$}
\label{section_proofB}
\setcounter{equation}{0}

\begin{figure}[H]
	\begin{center}
		\begin{overpic}[width=0.45\textwidth,grid=false,tics=10]{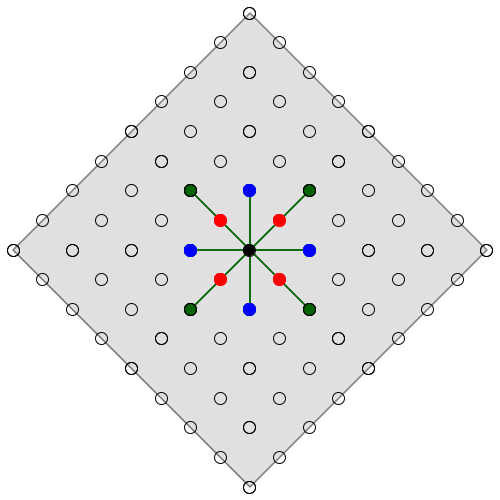}
			\put (60,53) {\large \textcolor{blue}{$\displaystyle \fweight{1}$}}
			\put (53,60) {\large \textcolor{red}{$\displaystyle \fweight{2}$}}
		\end{overpic}
		\quad
		\begin{overpic}[width=0.45\textwidth,grid=false,tics=10]{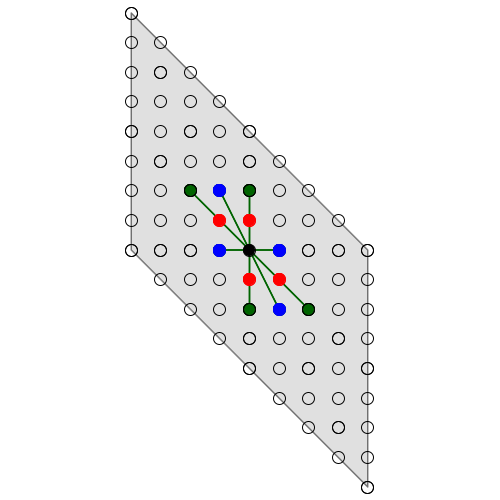}
			\put (56,52) {\large \textcolor{blue}{$\displaystyle e_{1}$}}
			\put (50,58) {\large \textcolor{red}{$\displaystyle e_{2}$}}
		\end{overpic}
	\end{center}
	\caption{The \textcolor{OliveGreen}{root system} $\RootB[2]$ and its weight lattice in the usual orthogonal representation and the integer representation. The orbits of the fundamental weights are the \textcolor{blue}{blue} and \textcolor{red}{red} lattice elements.}
	\label{example_rootsystemB2}
\end{figure}


The root system $\RootB$ given in \cite[Planche II]{bourbaki456} is a root system in $\R^n$. Its Weyl group is isomorphic to that of $\RootC$, that is $\weyl\cong \mathfrak{S}_{n}\ltimes\{\pm 1\}^n$. 
However, the fundamental weights are now
\begin{equation}\label{equation_WeightsRootsB}
\fweight{i}=e_1 + \ldots + e_i\quad(1\leq i\leq n-1) \tbox{and} \fweight{n}=(e_1+\ldots+e_n)/2.
\end{equation}
In particular, the $\TTorus$--orbit space can be very different from $\RootC$, see \Cref{fig_OrbitSpaceIntro}. 
We have $-I_n\in\weyl$ and thus $-\fweight{i}\in\weyl\,\fweight{i}$. Furthermore, the orbit $\weyl\,\fweight{i}$ has cardinality $2^i\,\binom{n}{i}$ for $1\leq i \leq n$, see \Cref{example_rootsystemB2}. 

In this section, we give a closed formula for the matrix polynomial from \Cref{MainThmIntro} in the standard monomial basis for $\RootB$. This is a root system of rank $n$. Hence, the ring of Laurent polynomials is $\Rx=\Q[x_1,x_1^{-1},\ldots, x_{n},x_{n}^{-1}]$ and the polynomial ring is $\RX=\Q[z_1,\ldots,z_{n}]$.

\subsection{Orbit polynomials}

We denote by $\gva\in\mathrm{GL}_{n}(\Z)$ the integer representation of $\weyl$ with respect to the fundamental weights in \Cref{equation_WeightsRootsB}. Then the orbit $\gva \cdot x_1 = \{ x^{B e_1} \,\vert\, B\in \gva \} \subseteq \Rx$ consists of $2n$ distinct monomials, namely
\begin{equation}\label{eq_monomialsB}
	y_1:=x_1 ,\quad y_2:=x_2\,x_{1}^{-1} ,\quad \ldots ,\quad y_{n-1}:=x_{n-1}\,x_{n-2}^{-1} ,\quad  y_n:=x_{n}^{2}\,x_{n-1}^{-1}
\end{equation}
and their inverses.  For $1\leq i\leq n$, let $\sigma_i$ be the $i$--th elementary symmetric function in $n$ indeterminates and recall that $\orb{i}=\bigorb{e_i}$ is the $\gva$--invariant orbit polynomial associated to $e_i\in\Z^{n}$ from \Cref{defi_orbitpoly}.

\begin{proposition}\label{OrbitSumsBn}
In $\Qx$, we have
\begin{align*}
\sigma_i\left(\frac{y_1(x)+y_1(x)^{-1}}{2},\ldots,\frac{y_n(x)+y_n(x)^{-1}}{2}\right)		=&\,	\binom{ n}{ i } \, \orb{i}(x)\quad(1\leq i\leq n-1)	 \\
\tbox{and}
\sigma_{n}\left(\frac{y_1(x)+y_1(x)^{-1}}{2},\ldots,\frac{y_n(x)+y_n(x)^{-1}}{2}\right)	=&\,	\bigorb{2\,e_{n}}(x).
\end{align*}
\end{proposition}
\begin{proof}
It follows from \Cref{eq_monomialsB} that $x_i = y_1(x) \ldots y_i(x)$ and $x_n^2=y_1(x)\ldots y_n(x)$. Then the proof is analogous to \Cref{OrbitSumsCn}.
\end{proof}

We compute the explicit expression for the right hand side of \Cref{OrbitSumsBn} in terms of the fundamental invariants $\orb{i}$.

\begin{lemma}\label{OrbitSumsBn2}
In $\Qx$, we have
\[
\bigorb{2\,e_{n}}	=	\,	2^n\,\orb{n}^2 - \sum_{j=1}^{n-1}\binom{ n }{ j }\,\orb{j} -1.
\]
\end{lemma}
\begin{proof}
The cardinality of the orbit $\gva\, e_{n}$ is $2^n$. Let $\alpha\in\gva \, e_{n}$ and distinguish between the following cases.
\begin{enumerate}
\item If $\alpha = e_{n}$, then $\bigorb{e_{n}+\alpha} = \bigorb{2\,e_{n}}$ is the term for which we seek an explicit formula.
\item If $\alpha=-e_{n}$, then $\bigorb{e_{n}+\alpha} = \bigorb{0} = 1$.
\item Otherwise, it follows from \Cref{equation_WeightsRootsB} that there exists $1 \leq j \leq n-1$, such that $e_n+\alpha\in\gva\,e_j$. The number of $\alpha$, for which this is the case, is $\binom{n}{j}$.
\end{enumerate}
With the recurrence formula from \Cref{proposition_RecurrenceOrbitPolynomials}, we conclude
\[
	2^n \,\orb{n}^2
=	\nops{\gva\,e_n}\,\bigorb{e_n}\,\bigorb{e_n}
=	\sum_{\alpha\in\gva \, e_{n}} \, \bigorb{\alpha+e_n}
=	\bigorb{2\,e_{n}} +\sum_{j=1}^{n-1} \binom{ n }{ j  }\,\bigorb{e_j} + 1
=	\bigorb{2\,e_{n}} +\sum_{j=1}^{n-1} \binom{ n }{ j  }\,\orb{j} + 1
\]
and obtain the formula for $\bigorb{2 \,e_{n}}$.
\end{proof}

\begin{lemma}\label{proposition_PsiB}
The map
\[
\Upsilon:
\begin{array}[t]{ccl}
	\TTorus^{n}	&	\to 	& \TTorus^n,\\
	x 			&	\mapsto	& 	( y_1(x),\ldots,y_{n}(x) ),
\end{array}
\]
is surjective. 
Furthermore, every $y\in\TTorus^n$ has exactly two distinct preimages $x,x'\in\TTorus^n$ with
\[
	\orb{i}(x)=\orb{i}(x')\quad(1\leq i\leq n-1)
	\tbox{and}
	\orb{n}(x)=-\orb{n}(x').
\]
\end{lemma}
\begin{proof}
For $y\in\TTorus^n$, choose $x\in\TTorus^n$ with $ x_1= y_1, x_2= y_1\, x_1,\ldots, x_{n-1}= y_{n-1}\, x_{n-2}$ and $ x_n^2= y_n\, x_{n-1}$. 
Then $x$ is a preimage of $y$ under $\Upsilon$ and unique up to a sign in the last coordinate.

For $1\leq i\leq n-1$, we have $\orb{i}(x)=\orb{i}(x')$, because $y_k(x)=y_k(x')$ for all $1\leq k \leq n$.

It follows from \Cref{equation_WeightsRootsB} that the stabilizer of $\fweight{n}$ in $\weyl$ is $\mathfrak{S}_n$. 
Hence, we have $\weyl\, \fweight{n} = \{\pm 1\}^{n}\,\fweight{n}$ and, for all $\weight{}\in \weyl\, \fweight{n}$, there exist $\epsilon_i = \pm 1$ and $\nu\in\Z\,\fweight{1}\oplus\ldots\oplus\Z\,\fweight{n-1}$, such that
\[
	\weight{}=\dfrac{\epsilon_1}{2}\,e_1+\ldots+\dfrac{\epsilon_n}{2}\,e_n = \dfrac{\epsilon_1}{2}\,\fweight{1}+\sum\limits_{i=2}^{n-1} \dfrac{\epsilon_i}{2}\, (\fweight{i}-\fweight{i-1}) + \dfrac{\epsilon_n}{2}\,(2\,\fweight{n}-\fweight{n-1})=\epsilon_n\,\fweight{n} +\nu.
\]
Now let $\alpha$, $\beta\in\Z^n$, such that $\weight{}=\alpha_1\,\fweight{1}+\ldots+\alpha_n\,\fweight{n}$ and $\nu=\beta_1\,\fweight{1}+\ldots+\beta_n\,\fweight{n}$. 
Then $\beta_n=0$ and the monomial in $\orb{n}$ corresponding to $\weight{}$ is $x^{\alpha}	=	x_1^{\beta_1} \ldots x_{n-1}^{\beta_{n-1}}\,x_n^{\epsilon_n}$. Thus, $x^{\alpha}$ is linear in $x_n$. Since every monomial in $\orb{n}$ can be written in terms of such $\beta$ and $\epsilon_i$, $\orb{n}$ is linear in $x_n$ and with $x,x'$ as above we have $\orb{n}(x)=-\orb{n}(x')$.
\end{proof}

\subsection{Hermite characterization with standard monomials}

We now characterize, whether a given point $z$ is contained in the $\TTorus$--orbit space $\Image$ of $\gva$, that is, 
if the equation $\orb{i}(x)=z_i$ has a solution $x\in\TTorus^n$. 
As shown in previous subsections, this is equivalent to deciding whether a symmetric polynomial system of type $\mathrm{(II)}$ has its solution in $\TTorus^n$. 
We state our main result for $\RootB$ in the standard monomial basis.

\begin{theorem}\label{HermiteCharacterizationBn}
Define the matrix $H\in\RX^{n\times n}$ by
\begin{align*}
H(z)_{ij}	&=	\trace(C(z)^{i+j-2}-C(z)^{i+j}),
\tbox{where}
C(z)		=
\begin{bmatrix}
0		&	\cdots	&	0		&	-c_n(z)		\\
1		&			&	0		&	-c_{n-1}(z)	\\
		&	\ddots	&			&	\vdots		\\
0		&			&	1		&	-c_1(z)
\end{bmatrix},\\
c_i(z)		&=	(-1)^i \,\binom{ n }{ i } \, z_i \quad (1\leq i\leq n-1) \tbox{and}
c_n(z)		=	(-1)^n \left( 2^n\,z_n^2- \sum\limits_{i=1}^{n-1} \binom{n}{i}\, z_i - 1 \right) .
\end{align*}
Then $\Image=\Image_\R=\{ z\in \R^n \,\vert\, H(z)\succeq 0 \}$.
\end{theorem}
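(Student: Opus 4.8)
The plan is to transcribe the proof of \Cref{HermiteCharacterizationCn}, reducing ``$z\in\Image$'' to the solvability in $\TTorus^n$ of a type $\mathrm{(II)}$ system and then invoking \Cref{Corollary_SolutionsSystemII}. The one genuinely new feature comes from the last coordinate: by \Cref{OrbitSumsBn} the $n$--th equation of the system involves $\bigorb{2\fweight{n}}$ rather than $\orb{n}$, and by \Cref{OrbitSumsBn2} this makes the coefficient $c_n(z)$ depend on $z_n$ only through $z_n^2$; so one extra sign argument, based on \Cref{proposition_PsiBn}(3), is needed to recover $\orb{n}$ itself.

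Fix $z\in\R^n$ and put $c_i:=c_i(z)\in\R$ for $1\le i\le n$; note $H(z)\in\R^{n\times n}$ is then clear. The key bookkeeping observation is that if $x\in\TTorus^n$ satisfies $\orb{i}(x)=z_i$ for all $i$ and $y=\psi(x)$ is as in \Cref{proposition_PsiBn}, then combining \Cref{OrbitSumsBn} with \Cref{OrbitSumsBn2} gives
\[
\sigma_i(y_1+y_1^{-1},\ldots,y_n+y_n^{-1})=(-1)^i\,c_i(z)\tbox{for}1\le i\le n,
\]
since $2^i\binom{n}{i}z_i=(-1)^i c_i(z)$ for $i\le n-1$, while $2^n\bigorb{2\fweight{n}}(x)=2^n\bigl(2^n z_n^2-\sum_{j=1}^{n-1}\binom{n}{j}z_j-1\bigr)=(-1)^n c_n(z)$. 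For the inclusion ``$\subseteq$'' one then argues exactly as in \Cref{HermiteCharacterizationCn}: given $z\in\Image$, choose such an $x$; the displayed identity says $y=\psi(x)$ is the solution of the corresponding system $\mathrm{(II)}$, and it lies in $\TTorus^n$ by \Cref{proposition_PsiBn}(1), so \Cref{Corollary_SolutionsSystemII} yields $H(z)\succeq 0$.

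For ``$\supseteq$'', assume $H(z)\succeq 0$. By \Cref{Corollary_SolutionsSystemII} the solution $y$ of system $\mathrm{(II)}$ with coefficients $c_i(z)$ lies in $\TTorus^n$, and by \Cref{proposition_PsiBn} it has exactly two $\psi$--preimages $x,x'\in\TTorus^n$ with $x_n=-x_n'$. For $1\le i\le n-1$, \Cref{OrbitSumsBn} forces $\orb{i}(x)=\orb{i}(x')=\binom{n}{i}^{-1}2^{-i}(-1)^i c_i(z)=z_i$. For $i=n$, \Cref{OrbitSumsBn} and \Cref{OrbitSumsBn2} turn the equation $\sigma_n(y+y^{-1})=(-1)^n c_n(z)$ into $2^n\orb{n}(x)^2-\sum_{j=1}^{n-1}\binom{n}{j}z_j-1=2^n z_n^2-\sum_{j=1}^{n-1}\binom{n}{j}z_j-1$, hence $\orb{n}(x)^2=z_n^2$. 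By \Cref{proposition_PsiBn}(3) we have $\orb{n}(x')=-\orb{n}(x)$, so exactly one of $x,x'$ — after renaming, $x$ — satisfies $\orb{n}(x)=z_n$; then $\bigcos(x)=z$ and $z\in\Image$.

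I expect the only real obstacle to be this last step: in contrast to type $\mathcal{C}$, the map $\psi$ is two--to--one and the system $\mathrm{(II)}$ only determines $\orb{n}^2$, not $\orb{n}$, so one must know that the two $\psi$--preimages realise the two square roots $\pm z_n$ — which is precisely \Cref{proposition_PsiBn}(3), resting on $\orb{n}$ being affine--linear in $x_n$. Everything else is a routine copy of the type $\mathcal{C}$ computation.
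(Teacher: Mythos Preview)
Your proof is correct and follows essentially the same route as the paper: reduce to a type $\mathrm{(II)}$ system via \Cref{OrbitSumsBn} and \Cref{OrbitSumsBn2}, invoke \Cref{Corollary_SolutionsSystemII}, and for the reverse inclusion use \Cref{proposition_PsiBn}(2)--(3) to pick the preimage with the correct sign of $\orb{n}$. The only nitpick is your phrase ``exactly one of $x,x'$'': when $z_n=0$ both preimages satisfy $\orb{n}=z_n$, but this does not affect the conclusion.
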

\begin{proof}
Let $z\in\R^n$ and set $c_i:=c_i(z)\in\R$ for $1\leq i \leq n$.

To show ``$\subseteq$'', assume that $z\in\Image$. 
Then there exists $ x\in\TTorus^{n}$, such that $\orb{i}(x)=z_i$ for $1\leq i\leq n$. 
By \Cref{OrbitSumsBn} and \Cref{OrbitSumsBn2,proposition_PsiB}, the solution of the symmetric polynomial system
\[
\mathrm{(II)}  \quad  \sigma_i\left(\frac{y_1+y_1^{-1}}{2}	,\ldots,\frac{y_n+y_n^{-1}}{2}	\right) = (-1)^i\,c_i \tbox{for} 1\leq i\leq n
\]
is $y=\Upsilon( x)\in\TTorus^n$. 
Applying \Cref{Corollary_SolutionsSystemII} yields $H(z)\succeq 0$.

For ``$\supseteq$'' on the other hand, assume $H(z)\succeq 0$. By \Cref{Corollary_SolutionsSystemII}, the solution $y$ of system $(\mathrm{II})$ with coefficients $c_i$ is contained in $\TTorus^n$. According to \Cref{proposition_PsiB}, $y$ has exactly two distinct preimages $ x, x'\in\TTorus^n$ under $\Upsilon$ with $x_1=x_1',\ldots,x_{n-1}=x_{n-1}'$ and $x_n = -x_n'$. 
We have $z_i=\orb{i}( x)=\orb{i}( x')$ for $1\leq i\leq n-1$ and $z_n^2=\orb{n}( x)^2=\orb{n}( x')^2$ with $\orb{n}( x)=-\orb{n}( x')$. 
Therefore, $z_n=\orb{n}( x)$ or $z_n=-\orb{n}( x)=\orb{n}( x')$ and thus, $z$ is contained in $\Image$.
\end{proof}

\subsection*{Example: $\RootB[2]$}

We study the root system from \Cref{example_rootsystemB2} with Weyl group $\weyl\cong\mathfrak{S}_2 \ltimes \{\pm 1\}^2$. 
Since $\weyl\fweight{1}=\{\fweight{1},-\fweight{1},\fweight{1}-2\fweight{2},2\fweight{2}-\fweight{1}\}$ and $\weyl\fweight{2}=\{\fweight{2},-\fweight{2},\fweight{1}-\fweight{1},\fweight{2}-\fweight{1}\}$, the integer representation of $\weyl$ is
\[
\gva
=	\left\{
\begin{bmatrix} 1 & 0 \\ 0 & 1 \end{bmatrix},
\begin{bmatrix} 1 & 1 \\ 0 & -1 \end{bmatrix},
\begin{bmatrix} 1 & 0 \\ -2 & -1 \end{bmatrix},
\begin{bmatrix} 1 & 1 \\ -2 & -1 \end{bmatrix},
\begin{bmatrix} -1 & -1 \\ 2 & 1 \end{bmatrix},
\begin{bmatrix} -1 & 0 \\ 2 & 1 \end{bmatrix},
\begin{bmatrix} -1 & -1 \\ 0 & 1 \end{bmatrix},
\begin{bmatrix} -1 & 0 \\ 0 & -1 \end{bmatrix}
\right\}.
\]
Let $z=(z_1,z_2) \in\R^2$. 
The matrix $C\in \RX^{2\times 2}$ from \Cref{HermiteCharacterizationBn} is
\[
C(z)=
\begin{bmatrix}
0	&	-4\,z_2^2+2\,z_1+1 \\
1	&	2\,z_1 \\
\end{bmatrix}.
\]
Then $z$ is contained in $\Image$ if and only if the resulting Hermite matrix
\[
H(z)=16
\begin{bmatrix}
-4\,z_1^2 + 8\,z_2^2 - 4\,z_1						&	-8\,z_1^3 + 24\,z_1\,z_2^2 - 12\,z_1^2 - 4\,z_1		\\
-8\,z_1^3 + 24\,z_1\,z_2^2 - 12\,z_1^2 - 4\,z_1		&	-16\,z_1^4 + 64\,z_1^2\,z_2^2 - 32\,z_2^4 - 32\,z_1^3 + 32\,z_1\,z_2^2 - 20\,z_1^2 + 8\,z_2^2 - 4\,z_1
\end{bmatrix}
\]
is positive semi--definite, which is equivalent to its determinant and trace 
\[
\begin{array}{rcl}
	\textcolor{red}{ \det (H(z))}		&\textcolor{red}{ =}	&	 \textcolor{red}{-64\,z_2^2\,(-z_2^2 + z_1)\,(z_1 + 1 + 2\,z_2)\,(z_1 + 1 - 2\,z_2)}	\\
	\textcolor{blue}{ \trace (H(z))}	&\textcolor{blue}{ =}	&	 \textcolor{blue}{-16\,z_1^4 + 64\,z_1^2\,z_2^2 - 32\,z_2^4 - 32\,z_1^3 + 32\,z_1\,z_2^2 - 24\,z_1^2 + 16\,z_2^2 - 8\,z_1}
\end{array}
\]
being nonnegative. The varieties of these two polynomials in $z_1, z_2$ are depicted below.
\begin{figure}[H]
\begin{center}
	\begin{subfigure}{.3\textwidth}
		\centering
		\includegraphics[width=4.7cm]{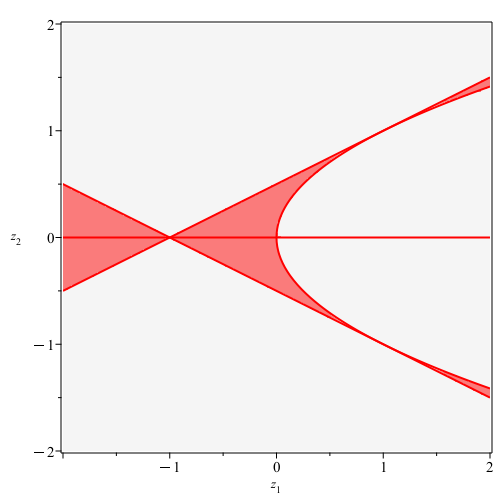}
		\caption{\textcolor{red}{$\det (H(z))\geq 0$}}
	\end{subfigure}%
	\begin{subfigure}{.3\textwidth}
		\centering
		\includegraphics[width=4.7cm]{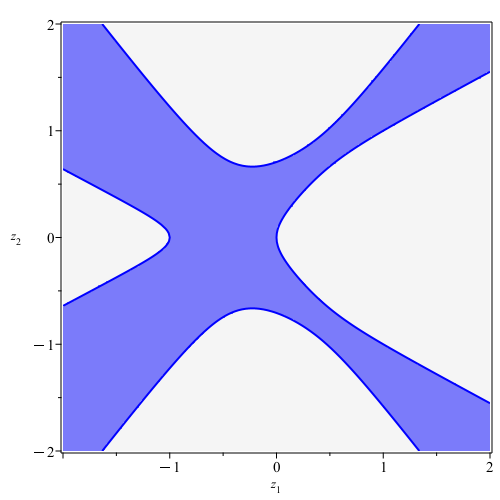}
		\caption{\textcolor{blue}{$\trace (H(z))\geq 0$}}
	\end{subfigure}
	\begin{subfigure}{.3\textwidth}
		\centering
		\includegraphics[width=4.7cm]{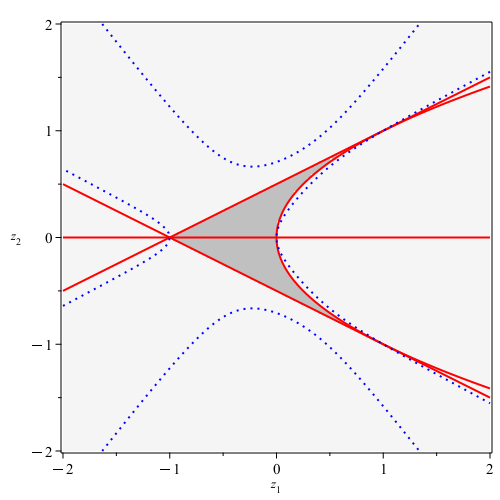}
		\caption{\textcolor{gray}{$\det (H(z)),\trace (H(z))\geq 0$}}
	\end{subfigure}
\caption{The semi--algebraic sets defined by the determinant and trace of $H(z)$.}
\label{example_figureB2}
\end{center}
\end{figure}

The $\TTorus$--orbit space in the $\RootB[2]$--case is obtained from the $\RootC[2]$--case in \Cref{example_figureC2} by permuting $z_1$ and $z_2$. 
In fact, this follows immediately from their Dynking graphs, which are the same up to permutation.
Let us confirm that it follows also from our semi--algebraic description: 
Indeed, we have $\det(H(z))/\det (H^{\RootC[2]}(z_2,z_1))=16\,z_2^2\geq 0$ on $\R^2$. 
Furthermore, the determinant of $H(z)$ is $0$ on the line ``$z_2=0$'' and the rank is $0$ in $z=(0,0)$ (although this is not a vertex).

\section{Type $\RootD$}
\label{section_proofD}
\setcounter{equation}{0}


The group $\{\pm 1\}^n_+:=\{\epsilon\in\{\pm 1\}^n\,\vert\,\epsilon_1\ldots\epsilon_n=1\}$ acts on $\R^n$ by multiplication of coordinates with $\pm 1$, where only an even amount of sign changes is admissible. The root system $\RootD$ given in \cite[Planche IV]{bourbaki456} is a root system in $\R^n$ with fundamental weights
\begin{equation}\label{equation_WeightsRootsD}
\fweight{i} = e_1 + \ldots + e_i \quad(1\leq i\leq n-2) \tbox{and} \fweight{n-1}=(e_1+\ldots +e_{n-1}-e_n)/2,\quad \fweight{n}=(e_1+\ldots + e_n)/2.
\end{equation}
The Weyl group of $\RootD$ is $\weyl \cong \mathfrak{S}_{n}\ltimes\{\pm 1\}^n_+$. For all $1\leq i \leq n$, we have $-\fweight{i}\in\weyl\,\fweight{i}$, except when $n$ is odd, where $-\fweight{n-1}\in\weyl\,\fweight{n}$. Furthermore, we have $\vert \weyl\,\fweight{i} \vert = 2^i\,\binom{n}{i}$ for $1\leq i \leq n-2$ and $\nops{\weyl\,\fweight{n-1}}=\nops{\weyl\,\fweight{n}}=2^{n-1}$.

In this section, we give a closed formula for the matrix polynomial from \Cref{MainThmIntro} in the standard monomial basis for $\RootD$. This is a root system of rank $n$. Hence, the ring of Laurent polynomials is $\Rx=\Q[x_1,x_1^{-1},\ldots, x_{n},x_{n}^{-1}]$ and the polynomial ring is $\RX=\Q[z_1,\ldots,z_{n}]$.

\subsection{Orbit polynomials}

We denote by $\gva\in\mathrm{GL}_{n}(\Z)$ the integer representation of $\weyl$ with respect to the fundamental weights in \Cref{equation_WeightsRootsD}. Then the orbit $\gva \cdot x_1 = \{ x^{B e_1} \,\vert\, B\in \gva \} \subseteq \Rx$ consists of $2n$ distinct monomials, namely
\begin{equation}\label{eq_monomialsD}
	y_1:=x_1 ,\quad y_2:=x_2\,x_{1}^{-1} ,\quad \ldots ,\quad y_{n-2}:=x_{n-2}\,x_{n-3}^{-1} ,\quad y_{n-1}:=x_n\,x_{n-1}\,x_{n-2}^{-1} ,\quad y_n:=x_n\,x_{n-1}^{-1}
\end{equation}
and their inverses.  For $1\leq i\leq n$, let $\sigma_i$ be the $i$--th elementary symmetric function in $n$ indeterminates and recall that $\orb{i}=\bigorb{e_i}$ is the $\gva$--invariant orbit polynomial associated to $e_i\in\Z^{n}$ from \Cref{defi_orbitpoly}.

\begin{proposition}\label{OrbitSumsDn}
In $\Qx$, we have
\begin{align*}
\sigma_i\left(\frac{y_1(x)+y_1(x)^{-1}}{2},\ldots,\frac{y_n(x)+y_n(x)^{-1}}{2}\right)		=&\, \binom{ n }{ i } \,\orb{i}(x) \quad(1\leq i\leq n-2) \\
\tbox{and}
\sigma_{n-1}\left(\frac{y_1(x)+y_1(x)^{-1}}{2},\ldots,\frac{y_n(x)+y_n(x)^{-1}}{2}\right)	=&\, n \, \bigorb{e_{n-1}+e_{n}}(x) , \\
\sigma_{n}\left(\frac{y_1(x)+y_1(x)^{-1}}{2},\ldots,\frac{y_n(x)+y_n(x)^{-1}}{2}\right)		=&\, \frac{\bigorb{2\,e_{n-1}}(x)+\bigorb{2\,e_{n}}(x)}{2}.
\end{align*}
\end{proposition}
\begin{proof}
Let $1\leq i \leq n-2$. 
It follows from \Cref{eq_monomialsD} that $x_i=y_1(x) \ldots y_i(x)$. 
Then the statement for $\orb{i}$ is proven analogously to \Cref{OrbitSumsCn}. 

With $x_n \, x_{n-1} = y_1(x) \ldots y_{n-1}(x)$, we obtain the equation for $\bigorb{e_{n-1}+e_{n}}(x)$ as well.

Finally, we have $x_{n-1}^2=y_1(x)\ldots y_{n-1}(x)\,y_n(x)^{-1}$ and $x_{n}^2=y_1(x)\ldots y_n(x)$. 
Thus, 
\[
	\frac{\bigorb{2\,e_{n-1}}(x)+\bigorb{2\,e_{n}}(x)}{2}
=	\frac{1}{2^n}\left(\sum_{\substack{\epsilon\in\{\pm 1\}^n\\ \epsilon_1\ldots\epsilon_n=-1}}	y(x)^\epsilon
+	\sum_{\substack{\epsilon\in\{\pm 1\}^n\\ \epsilon_1\ldots\epsilon_n=1}}		y(x)^\epsilon \right)
=	\frac{1}{2^n} \sum_{\epsilon\in\{\pm 1\}^n}		y(x)^\epsilon
=	\frac{1}{2^n} \prod_{i=1}^n	 y_i(x)+y_i(x)^{-1},
\]
where $y^\epsilon:=y_1^{\epsilon_1}\ldots y_n^{\epsilon_n}$. This proves the last equation.
\end{proof}

We give the explicit expression for the right hand side of \Cref{OrbitSumsDn} in terms of the fundamental invariants.

\begin{lemma}\label{OrbitSumsDn2}
\begin{enumerate}
\item If $n$ is even, then
\begin{align*}
\bigorb{e_{n-1}+e_{n}}	&=	\,	\dfrac{2^{n-1}}{n}\,\orb{n-1}\,\orb{n} -\dfrac{1}{n}\,\sum_{j=1}^{(n-2)/2}\binom{ n }{ 2j-1}\,\orb{2j-1},\\
\bigorb{2\,e_{n-1}}	&=	\,	2^{n-1}\,\orb{n-1}^2-\sum_{j=1}^{(n-2)/2} \binom{ n }{ 2j } \, \orb{2j} -1,\\
\bigorb{2\,e_{n}}	&=	\,	2^{n-1}\,\orb{n}^2-\sum_{j=1}^{(n-2)/2} \binom{ n }{ 2j } \,\orb{2j} -1.
\end{align*}
\item If $n$ is odd, then
\begin{align*}
\bigorb{e_{n-1}+e_{n}}	&=	\,	\dfrac{2^{n-1}}{n}\,\orb{n-1}\,\orb{n} -\dfrac{1}{n}\,\sum_{j=1}^{(n-3)/2}\binom{ n }{ 2j }\,\orb{2j} -\dfrac{1}{n},\\
\bigorb{2\,e_{n-1}}	&=	\,	2^{n-1}\,\orb{n-1}^2-\sum_{j=0}^{(n-3)/2} \binom{ n }{ 2j+1 } \,\orb{2j+1},\\
\bigorb{2\,e_{n}}	&=	\,	2^{n-1}\,\orb{n}^2-\sum_{j=0}^{(n-3)/2} \binom{ n }{ 2j+1 } \,\orb{2j+1}.
\end{align*}
\end{enumerate}
\end{lemma}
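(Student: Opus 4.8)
The plan is to imitate the proof of \Cref{OrbitSumsBn2}: apply the recurrence of \Cref{proposition_RecurrenceOrbitPolynomials} to the products $\orb n^2=\bigorb{\fweight n}^2$ and $\orb{n-1}\orb n=\bigorb{\fweight{n-1}}\bigorb{\fweight n}$, expand the right-hand sides over the orbit $\WeylD\fweight n$ (which has $2^{n-1}$ elements), and identify every orbit polynomial that occurs. The third identity, for $\bigorb{2\fweight{n-1}}$, I would then deduce from the one for $\bigorb{2\fweight n}$ by the automorphism of the Coxeter diagram in \Cref{equation_CoxeterDn} that exchanges the two fork nodes $s_{n-1},s_n$: on $V=\R^n$ it is the sign change $e_n\mapsto -e_n$, it normalizes $\WeylD$ inside $\WeylC$, it exchanges $\fweight{n-1}\leftrightarrow\fweight n$ (hence $\orb{n-1}\leftrightarrow\orb n$) and fixes $\fweight 1,\dots,\fweight{n-2}$, and it carries $\bigorb{2\fweight{n-1}}$ to $\bigorb{2\fweight n}$ while fixing $\bigorb{\fweight{n-1}+\fweight n}$. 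Throughout, $\bigorb\weight$ depends only on the $\WeylD$-orbit of $\weight$, and I use $\WeylD\fweight n=\{\tfrac12\delta\mid \delta\in\{\pm 1\}^n,\ \delta_1\cdots\delta_n=1\}$, which follows from $\WeylD=\mathfrak{S}_n\ltimes\{\pm1\}^n_+$ and $\fweight n=\tfrac12(e_1+\dots+e_n)$.

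The crucial preliminary step is an orbit-identification: if $\weight$ is a weight all of whose coordinates lie in $\{-1,0,1\}$, with support of size $s$, then $\bigorb\weight=1$ if $s=0$; $\bigorb\weight=\orb s$ if $1\le s\le n-2$; $\bigorb\weight=\bigorb{\fweight{n-1}+\fweight n}$ if $s=n-1$; and if $s=n$ then $\bigorb\weight=\bigorb{2\fweight n}$ or $\bigorb\weight=\bigorb{2\fweight{n-1}}$ according as the number of coordinates equal to $-1$ is even or odd. The point is that $\WeylD$ contains only the even sign changes, but whenever $s<n$ a single sign flip can still be realized by composing an even sign change with a transposition through a zero coordinate; hence for $s<n$ the $\WeylD$-orbit of $\weight$ depends only on $s$, and it contains $\fweight s=e_1+\dots+e_s$ (using $\WeylD\fweight 1=\{\pm e_1,\dots,\pm e_n\}$, cf.\ the proof of \Cref{OrbitSumsDn}) when $s\le n-2$, and $e_1+\dots+e_{n-1}=\fweight{n-1}+\fweight n$ when $s=n-1$. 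For $s=n$ all coordinates are $\pm1$, and even sign changes preserve the parity of the number of $-1$'s, which splits these weights into the two orbits of $2\fweight n=e_1+\dots+e_n$ and of $2\fweight{n-1}=e_1+\dots+e_{n-1}-e_n$.

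Now I expand. For $\weight=\tfrac12\delta\in\WeylD\fweight n$ write $k$ for the number of $i$ with $\delta_i=-1$ (so $k$ is even). Then $\fweight n+\tfrac12\delta$ has all coordinates in $\{0,1\}$ and support of size $n-k$; running over $k=0,2,\dots$ with multiplicities $\binom nk$, and separating the extreme values $k=0$ (which yields $\bigorb{2\fweight n}$) and, when $n$ is even, $k=n$ (which yields the constant $1$), gives $2^{n-1}\orb n^2=\bigorb{2\fweight n}+\sum_k\binom nk\orb{n-k}+[n\text{ even}]$; re-indexing $i=n-k$ produces exactly the claimed right-hand sides, the parity of $n$ dictating whether the surviving indices are even or odd and whether the constant term survives. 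For $\bigorb{\fweight{n-1}}\bigorb{\fweight n}$ one expands $\fweight{n-1}+\tfrac12\delta$ over the same orbit, splitting on $\delta_n=1$ versus $\delta_n=-1$: in the first case one gets a weight $\sum_{j\in J}e_j$ with $J\subseteq\{1,\dots,n-1\}$, $|J|=n-1-k$, and in the second a weight $\sum_{j\in J}e_j-e_n$ with $|J|=n-k$, the multiplicities being $\binom{n-1}{k}$ and $\binom{n-1}{k-1}$ respectively. The extreme cases $k=0$ (first branch) and $k=2$ (second branch) each contribute a copy of $\bigorb{\fweight{n-1}+\fweight n}$, with total multiplicity $1+(n-1)=n$; the case $k=n-1$ in the first branch, present only when $n$ is odd, contributes the constant $1$; and all remaining terms are $\orb i$'s whose two families of binomial coefficients combine by Pascal's rule $\binom{n-1}{i}+\binom{n-1}{i-1}=\binom ni$. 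Dividing by $n$ yields the stated formula for $\bigorb{\fweight{n-1}+\fweight n}$, again with the even/odd split coming from the parity of $n$; and the diagram automorphism applied to the $\bigorb{2\fweight n}$ identity gives the $\bigorb{2\fweight{n-1}}$ identity.

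The genuinely delicate part is the orbit-identification of the second paragraph, and in particular keeping straight which of $\bigorb{\fweight{n-1}+\fweight n}$, $\bigorb{2\fweight n}$, $\bigorb{2\fweight{n-1}}$ or a constant a given $\sum_{j\in J}e_j\pm e_n$ produces; this is exactly where the absence of odd sign changes in $\WeylD$ matters, and it is also the origin of the dependence on the parity of $n$, since that parity decides whether the ``leftover'' extreme contributions $k=n$ or $k=n-1$ occur and which index-parity of the surviving $\orb i$'s appears. The remaining work — setting up the parametrizations, counting multiplicities by binomial coefficients, re-indexing, and contracting Pascal's rule — is routine.
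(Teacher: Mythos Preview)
Your proof is correct and follows essentially the same route as the paper: apply the recurrence of \Cref{proposition_RecurrenceOrbitPolynomials} to the products $\orb{n}^2$, $\orb{n-1}^2$, $\orb{n-1}\orb{n}$, expand over the $2^{n-1}$ elements of $\WeylD\,\fweight{n}$ (resp.\ $\WeylD\,\fweight{n-1}$), and identify each $\bigorb{\fweight{n}+\weight}$ combinatorially by the support size of $\fweight{n}+\weight$. Your orbit-identification paragraph is exactly the case analysis the paper carries out inline, and your Pascal-rule contraction for $\orb{n-1}\orb{n}$ is the same bookkeeping, just organized by first splitting on $\delta_n=\pm1$ rather than by the number of positive coordinates of $\weight\in\WeylD\,\fweight{n-1}$.

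The one genuine difference is that the paper computes all three products directly, whereas you derive the formula for $\bigorb{2\fweight{n-1}}$ from that for $\bigorb{2\fweight{n}}$ via the diagram automorphism $e_n\mapsto -e_n$ (the outer involution of \Cref{equation_CoxeterDn}). This is a legitimate shortcut: that involution normalizes $\WeylD$ inside $\WeylB\cong\WeylC$, swaps $\fweight{n-1}\leftrightarrow\fweight{n}$ while fixing $\fweight{1},\ldots,\fweight{n-2}$, and therefore acts on $\Qx^{\gva}$ by $\orb{n-1}\leftrightarrow\orb{n}$ and $\bigorb{2\fweight{n-1}}\leftrightarrow\bigorb{2\fweight{n}}$ with all other $\orb{i}$ fixed. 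It saves repeating one of the three nearly identical case analyses; the paper simply does that third case by hand.
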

\begin{proof}
With \Cref{equation_WeightsRootsD} and the recurrence formula from \Cref{proposition_RecurrenceOrbitPolynomials}, the proof is analogous to that for the case of $\RootB$ in \Cref{OrbitSumsBn2}. The full computation can be found in \cite[Lemma 1.43]{TobiasThesis}
\end{proof}

\begin{lemma}\label{proposition_PsiD}
The map
\[
\Upsilon:
\begin{array}[t]{ccl}
	\TTorus^{n}	&	\to 	& \TTorus^n,\\
	x 			&	\mapsto	& 	( y_1(x),\ldots,y_{n}(x) ),
\end{array}
\]
is surjective. 
Furthermore, every $y\in\TTorus^n$ has exactly two distinct preimages $x,x'\in\TTorus^n$ under $\Upsilon$ with
\[
\orb{i}(x)=
\begin{cases}
 \orb{i}(x'),&\mbox{if}\quad 1\leq i \leq n-2\\
-\orb{i}(x'),&\mbox{if}\quad i\in\{n-1,n\}
\end{cases}.
\]
\end{lemma}
\begin{proof}
For $y\in\TTorus^n$, choose $x\in\TTorus^n$ with $ x_1= y_1, x_2= y_1\, x_1,\ldots, x_{n-2}= y_{n-2}\, x_{n-3}$ and $ x_{n-1}^2 = y_n^{-1}\, y_{n-1}\, x_{n-2}$, $x_n= y_n\, x_{n-1}$. 
Then $x$ is a preimage of $y$ under $\Upsilon$ and unique up to a common sign in $ x_{n-1}$ and $x_n$. 
This second preimage is denoted by $x'$. 
By \Cref{OrbitSumsDn}, we have $\orb{i}(x) = \orb{i}(x')$ whenever $1\leq i\leq n-2$. 

It follows from \Cref{equation_WeightsRootsD} that the stabilizers of $\fweight{n-1}$ and $\fweight{n}$ in $\weyl$ are both $\mathfrak{S}_n$. 
Hence, we have $\weyl\, \fweight{n-1} = \{\pm 1\}_+^{n}\,\fweight{n-1}$ and $\weyl\, \fweight{n} = \{\pm 1\}_+^{n}\,\fweight{n}$. 
Thus, for $\weight{}\in \weyl\, \fweight{n}$, there exist $\epsilon_i = \pm 1$ with $\epsilon_1\ldots\epsilon_n=1$ and $\nu \in \Z \, \fweight{1} \oplus \ldots \oplus \Z \, \fweight{n-2}$, such that
\[
\begin{array}{rl}
\weight{}	=&	\dfrac{\epsilon_1}{2}\,e_1+\ldots+\dfrac{\epsilon_n}{2}\,e_n =	\dfrac{\epsilon_1}{2}\,\fweight{1}+\sum\limits_{i=2}^{n-2} \dfrac{\epsilon_i}{2}\, (\fweight{i} - \fweight{i-1}) + \dfrac{\epsilon_{n-1}}{2}\,(\fweight{n}+\fweight{n-1}-\fweight{n-2}) + \dfrac{\epsilon_{n}}{2}\,(\fweight{n}-\fweight{n-1})\\
			=&	\dfrac{\epsilon_{n-1}+\epsilon_{n}}{2}\,\fweight{n} + \dfrac{\epsilon_{n-1}-\epsilon_{n}}{2}\,\fweight{n-1} + \nu \in \Weights.
\end{array}
\]
Now let $\alpha$, $\beta\in\Z^n$, such that $\weight{}=\alpha_1\,\fweight{1}+\ldots+\alpha_n\,\fweight{n}$ and $\nu=\beta_1\,\fweight{1}+\ldots+\beta_n\,\fweight{n}$. 
Then $\beta_{n-1}=\beta_n=0$ and the monomial in $\orb{n}$ corresponding to $\weight{}$ is 
\[
	x^{\alpha}	=	x_1^{\beta_1} \ldots x_{n-2}^{\beta_{n-2}}\,x_{n-1}^{(\epsilon_{n-1}-\epsilon_{n})/2} \, x_n^{(\epsilon_{n-1}+\epsilon_{n})/2}
\]
with $(\epsilon_{n-1} \pm \epsilon_{n}) / 2 \in \{-1,0,1\}$. Therefore, $x^{\alpha}$ is linear in $x_{n-1}$ and independent of $x_n$ or vice versa. 
With $x, x'$ as above we have $x^\alpha=-(x')^\alpha$. 
Since every monomial in $\orb{n}$ can be written in terms of such $\beta$ and $\epsilon_i$, we obtain $\orb{n}(x)=-\orb{n}(x')$. 
Analogously, for $\weight{}\in \weyl\, \fweight{n-1}$, we have $\epsilon_1\ldots\epsilon_n=-1$ and obtain the statement for $\orb{n-1}$.
\end{proof}

\subsection{Hermite characterization with standard monomials}

We now characterize, whether a given point $z$ is contained in the $\TTorus$--orbit space $\Image$ of $\gva$, that is, 
if the equation $\orb{i}(x)=z_i$ has a solution $x\in\TTorus^n$. 
As shown in previous subsections, this is equivalent to deciding whether a symmetric polynomial system of type $\mathrm{(II)}$ has its solution in $\TTorus^n$. 
We state our main result for $\RootD$ in the standard monomial basis.

\begin{theorem}\label{HermiteCharacterizationDn}
Define the $n$--dimensional $\R$--vector space
\[
	\mathcal{Z}:=
	\begin{cases}
		\R^n,&\tbox{if} n \tbox{is even}\\
		\{ z\in\C^n\,\vert\, z_1,\ldots,z_{n-2}\in\R, \overline{z_n} = z_{n-1} \},&\tbox{if} n \tbox{is odd}
	\end{cases}
\]
and the matrix $H\in\RX^{n\times n}$ by
\begin{align*}
H(z)_{ij}	&=	 \trace(C(z)^{i+j-2}-C(z)^{i+j}),
\tbox{where}
C(z)		=
\begin{bmatrix}
0		&	\cdots	&	0		&	-c_n(z)		\\
1		&			&	0		&	-c_{n-1}(z)	\\
		&	\ddots	&			&	\vdots		\\
0		&			&	1		&	-c_1(z)
\end{bmatrix},\\
c_i(z)		&=	(-1)^i\,\binom{n}{i}\, z_i \quad (1\leq i\leq n-2) \tbox{and} \\
c_{n-1}(z)	&=	(-1)^{n-1}\,
\begin{cases}
2^{n-1}\,z_n\,z_{n-1}\,-\sum\limits_{j=1}^{(n-2)/2} \binom{n}{2j-1}\,z_{2j-1},&\tbox{if} n \tbox{is even}\\
2^{n-1}\,z_n\,z_{n-1}\,-\sum\limits_{j=1}^{(n-3)/2} \binom{n}{2j}\,z_{2j}-1,&\tbox{if} n \tbox{is odd}
\end{cases} , \\
~\\
c_{n}(z)	&= (-1)^n \, 
\begin{cases}
2^{n-2}\,(z_n^2+z_{n-1}^2)\,-\sum\limits_{j=1}^{(n-2)/2} \binom{n}{2j}\,z_{2j}-1,&\tbox{if} n \tbox{is even}\\
2^{n-2}\,(z_n^2+z_{n-1}^2)\,-\sum\limits_{j=0}^{(n-3)/2} \binom{n}{2j+1}\,z_{2j+1},&\tbox{if} n \tbox{is odd}
\end{cases} .
\end{align*}
For all $z\in \mathcal{Z}$, we have $H(z)\in\R^{n\times n}$ and $\Image=\{ z\in \mathcal{Z} \,\vert\, H(z)\succeq 0 \}$.
\end{theorem}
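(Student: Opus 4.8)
The plan is to follow the template of the proofs of \Cref{HermiteCharacterizationAn,HermiteCharacterizationCn,HermiteCharacterizationBn}: identify membership of $z$ in $\Image$ with the solvability, inside $\TTorus^n$, of a symmetric system of type $\mathrm{(II)}$ whose coefficients are the $c_i(z)$, and then apply \Cref{Corollary_SolutionsSystemII}. Before the two inclusions I would clear up the bookkeeping forced by the parity of $n$. If $n$ is even then $-I_n\in\WeylD$, so $\Image\subseteq[-1,1]^n=W$ and every $c_i(z)$ is manifestly real. If $n$ is odd then $-I_n\notin\WeylD$; here the stated relations $-\fweight{i}\in\WeylD\,\fweight{i}$ for $i\le n-2$ and $-\fweight{n-1}\in\WeylD\,\fweight{n}$ (equivalently $-\fweight{n}\in\WeylD\,\fweight{n-1}$) give, via the conjugation argument already used in \Cref{remark_PermutationOrb}, that $\overline{\orb{i}(x)}=\orb{i}(x)$ for $i\le n-2$ and $\overline{\orb{n}(x)}=\orb{n-1}(x)$ for all $x\in\TTorus^n$, whence $\Image\subseteq W$; and a glance at the displayed formulas shows that $z\in W$ forces $c_{n-1}(z),c_n(z)\in\R$ — this is exactly where $z_nz_{n-1}=|z_n|^2$ and $z_n^2+z_{n-1}^2=2\,\Re(z_n^2)$ get used — so that $C(z)$ and therefore $H(z)$ have real entries.

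For ``$\subseteq$'', given $z\in\Image$ fix $x\in\TTorus^n$ with $\orb{i}(x)=z_i$ and put $y:=\psi(x)$ for the map $\psi$ of \Cref{proposition_PsiDn}, so that $y\in\TTorus^n$. Substituting $\orb{i}(x)=z_i$ into \Cref{OrbitSumsDn} combined with \Cref{OrbitSumsDn2} shows that the numbers $\sigma_i(y_1+y_1^{-1},\dots,y_n+y_n^{-1})$ are precisely the right-hand sides $(-1)^i\,c_i(z)$ of system $\mathrm{(II)}$; thus $y$ is \emph{the} solution of that system, and since $y\in\TTorus^n$ we obtain $H(z)\succeq 0$ from \Cref{Corollary_SolutionsSystemII}. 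Together with $\Image\subseteq W$ from the previous paragraph, this gives one inclusion. It is a routine substitution.

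For ``$\supseteq$'', assume $z\in W$ with $H(z)\succeq 0$. By \Cref{Corollary_SolutionsSystemII} the solution $y$ of system $\mathrm{(II)}$ with coefficients $c_i(z)$ lies in $\TTorus^n$, and by \Cref{proposition_PsiDn} it has two preimages $x,x'\in\TTorus^n$ with $x'=(x_1,\dots,x_{n-2},-x_{n-1},-x_n)$, satisfying $\orb{i}(x)=\orb{i}(x')=z_i$ for $1\le i\le n-2$ (read off from the $\sigma_i$ with $i\le n-2$), while $\orb{n-1}(x)=-\orb{n-1}(x')$ and $\orb{n}(x)=-\orb{n}(x')$. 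Feeding the now-known $z_1,\dots,z_{n-2}$ back into \Cref{OrbitSumsDn2}, the two remaining equations $\sigma_{n-1},\sigma_n$ collapse to
\[
	\orb{n-1}(x)\,\orb{n}(x)=z_{n-1}\,z_n
	\tbox{and}
	\orb{n-1}(x)^2+\orb{n}(x)^2=z_{n-1}^2+z_n^2,
\]
so with $\{a,b\}=\{\orb{n-1}(x),\orb{n}(x)\}$ we get $ab=z_{n-1}z_n$ and hence $(a+b)^2=(z_{n-1}+z_n)^2$, forcing $\{a,b\}\in\{\,\{z_{n-1},z_n\},\{-z_{n-1},-z_n\}\,\}$. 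In the second case, replacing $x$ by $x'$ (which negates $\orb{n-1}$ and $\orb{n}$ and fixes $\orb{1},\dots,\orb{n-2}$) returns us to the first. If the pair then comes out in the order $(z_{n-1},z_n)$, that point maps to $z$ under $\bigcos$; if it comes out transposed, the point $\tilde x=(x_1,\dots,x_{n-2},x_n,x_{n-1})$ from part~(4) of \Cref{proposition_PsiDn} does, since $\tilde x$ swaps $\orb{n-1}$ and $\orb{n}$ while fixing $\orb{1},\dots,\orb{n-2}$. In every case $z\in\Image$.

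I expect the main obstacle to be precisely this last step: the map $\psi$ of \Cref{proposition_PsiDn} and the assignment $\{\orb{n-1},\orb{n}\}\mapsto\{z_{n-1},z_n\}$ each introduce a two-to-one ambiguity — a sign flip, respectively a transposition of the last two coordinates — and one must check that items (2)--(4) of \Cref{proposition_PsiDn} resolve both of them simultaneously, uniformly over the parity of $n$, with the odd case additionally relying on the identification $\overline{z_n}=z_{n-1}$ to keep the whole picture inside $W$. Everything else reduces to the already-established \Cref{OrbitSumsDn,OrbitSumsDn2,Corollary_SolutionsSystemII} and to elementary symmetric-function manipulations.
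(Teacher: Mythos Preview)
Your proposal is correct and follows essentially the same approach as the paper: both directions hinge on \Cref{OrbitSumsDn,OrbitSumsDn2} to match system $\mathrm{(II)}$ with the $c_i(z)$, invoke \Cref{Corollary_SolutionsSystemII}, and then resolve the sign/transposition ambiguity via parts (2)--(4) of \Cref{proposition_PsiDn} exactly as you describe. Your treatment is in fact slightly more explicit than the paper's in verifying $\Image\subseteq W$ and the reality of $c_{n-1}(z),c_n(z)$ in the odd case, and in spelling out why $ab=z_{n-1}z_n$ together with $a^2+b^2=z_{n-1}^2+z_n^2$ forces $\{a,b\}\in\{\{z_{n-1},z_n\},\{-z_{n-1},-z_n\}\}$.
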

\begin{proof}
Let $z\in\C^n$ and set $c_i:=c_i(z)$ for $1\leq i \leq n$.

To show ``$\subseteq$'', assume that $z\in\Image$. Then there exists $ x\in\TTorus^{n}$, such that $\orb{i}( x)=z_i$ for $1\leq i \leq n$. Furthermore, we have $z\in \mathcal{Z}$ and $c_i\in\R$. By \Cref{OrbitSumsDn} and \Cref{OrbitSumsDn2,proposition_PsiD}, the solution of the symmetric polynomial system
\[
\mathrm{(II)}  \quad  \sigma_i\left(\frac{y_1+y_1^{-1}}{2}	,\ldots,\frac{y_n+y_n^{-1}}{2}	\right) = (-1)^i\,\tilde{c}_i \quad (1\leq i\leq n)
\]
is $y=\Upsilon( x)\in\TTorus^n$. Applying \Cref{Corollary_SolutionsSystemII} yields $H(z)\succeq 0$.

For ``$\supseteq$'' on the other hand, assume $z\in \mathcal{Z}$ with $H(z)\succeq 0$. 
Hence, $c_i\in\R$ and by \Cref{Corollary_SolutionsSystemII}, the solution $y$ of the above system $(\mathrm{II})$ is contained in $\TTorus^n$. 
According to \Cref{proposition_PsiD}, $y$ has two distinct preimages $ x, x'\in\TTorus^n$. 
We have $z_i=\orb{i}( x)=\orb{i}( x')$ for $1\leq i \leq n-2$ and $\orb{n-1}( x)=-\orb{n-1}( x'),\orb{n}( x)=-\orb{n}( x')$. 
Furthermore, $z_{n-1}^2+z_n^2=\orb{n-1}( x)^2+\orb{n}( x)^2=\orb{n-1}( x')^2+\orb{n}( x')^2$ and $z_{n-1}\,z_n=\orb{n-1}( x)\,\orb{n}( x)=\orb{n-1}( x')\,\orb{n}( x')$. 
Therefore, $\{z_{n-1},z_n\}\in\{\{\orb{n-1}( x),\orb{n}( x)\},\{\orb{n-1}( x'),\orb{n}( x')\}\}$. 
If $z_{n-1}=\orb{n-1}( x),z_n=\orb{n}( x)$, then $z=\bigcos(x)$. 
Otherwise, set $\tilde{x} = (x_1,\ldots,x_{n-2},x_n,x_{n-1})$ to obtain $z_{n-1}=\orb{n-1}( \tilde{x}),z_n=\orb{n}( \tilde{x})$ and $z=\bigcos(\tilde{x})$. 
An analogous argument applies to $x'$ and thus, $z$ is contained in $\Image$.
\end{proof}

\section{Type \RootG[2]}
\label{section_proofG}
\setcounter{equation}{0}

The group $\mathfrak{S}_{3} \ltimes \{\pm 1\}$ acts on $\R^3$ by permutation of coordinates and scalar multiplication with $\pm 1$ and leaves the subspace $V = \R^3/\langle[1,1,1]^t\rangle =\{u\in\R^{n}\,\vert\,u_1+u_2 +u_{3}=0\}$ invariant. The root system $\RootG[2]$ from \cite[Planche IX]{bourbaki456} is a root system in $V$ with fundamental weights
\begin{equation}\label{equation_WeightsRootsG}
	\fweight{1}=(1,-1,0) \tbox{and}
	\fweight{2}=(-2,1,1).
\end{equation}
The Weyl group of $\RootG[2]$ is $\weyl\cong\mathfrak{S}_{3}\ltimes \{\pm 1\}$. We have $-I_3\in\weyl$ and thus, $-\fweight{1}\in\weyl\,\fweight{1}$ as well as $-\fweight{2}\in\weyl\,\fweight{2}$. 
Furthermore, we have$\nops{\weyl\,\fweight{1}} = \nops{\weyl\,\fweight{2}} = 6$, see \Cref{example_rootsystemG2}.

In this section, we give a closed formula for the matrix polynomial from \Cref{MainThmIntro} in the standard monomial basis for $\RootG[2]$. This is a root system of rank $2$. Hence, the ring of Laurent polynomials is $\Rx=\Q[x_1,x_1^{-1},x_{2},x_{2}^{-1}]$ and the polynomial ring is $\RX=\Q[z_1,z_2]$.

\begin{figure}[H]
\begin{center}
	\begin{overpic}[width=0.32\textwidth,grid=false,tics=10]{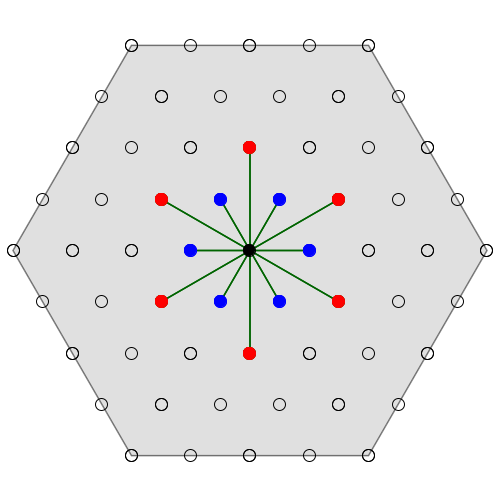}
	\put (63,52) {\large \textcolor{blue}{$\displaystyle \fweight{1}$}}
	\put (63,65) {\large \textcolor{red}{$\displaystyle \fweight{2}$}}
	\end{overpic}
	\quad
	\begin{overpic}[width=0.58\textwidth,grid=false,tics=10]{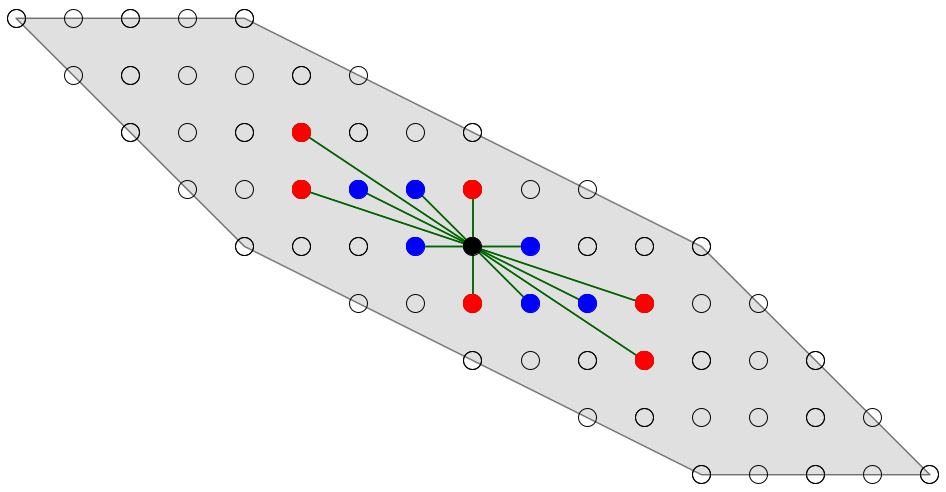}
	\put (57,27) {\large \textcolor{blue}{$\displaystyle e_{1}$}}
	\put (51,33) {\large \textcolor{red}{$\displaystyle e_{2}$}}
	\end{overpic}
\end{center}
\caption{The \textcolor{OliveGreen}{root system} $\RootG[2]$ and the weight lattice in the usual orthogonal representation and the integer representation. The orbits of the fundamental weights are the \textcolor{blue}{blue} and \textcolor{red}{red} lattice elements.}
\label{example_rootsystemG2}
\end{figure}

\subsection{Orbit polynomials}

The integer representation of $\weyl$ with respect to the fundamental weights in \Cref{equation_WeightsRootsG} is
\[
	\gva
=	\left\{
	\pm \begin{bmatrix} 1 & 0 \\ 0 & 1 \end{bmatrix},
	\pm \begin{bmatrix} -1 & 0 \\ 1 & 1 \end{bmatrix},
	\pm \begin{bmatrix} 1 & 3 \\ 0 & -1 \end{bmatrix},
	\pm \begin{bmatrix} 1 & 3 \\ -1 & -2 \end{bmatrix},
	\pm \begin{bmatrix} 2 & 3 \\ -1 & -1 \end{bmatrix},
	\pm \begin{bmatrix} 2 & 3 \\ -1 & -2 \end{bmatrix}
	\right\} .
\]
Then the orbit $\gva \cdot x_1 = \{ x^{B e_1} \,\vert\, B\in \gva \} \subseteq \Rx$ consists of $6$ distinct monomials, namely
\begin{equation}\label{eq_monomialsG}
	y_1:=x_1,\quad y_2:=x_1\,x_2^{-1},\quad y_3 := x_1^{-2} \, x_2 
\end{equation}
and their inverses. For $1\leq i\leq 3$, let $\sigma_i$ be the $i$--th elementary symmetric function in $3$ indeterminates and recall that, for $i\leq 2$, $\orb{i}=\bigorb{e_i}$ is the $\gva$--invariant orbit polynomial associated to $e_i\in\Z^{2}$ from \Cref{defi_orbitpoly}.

\begin{proposition}\label{OrbitsG2}
In $\Qx$, we have
\begin{align*}
	\sigma_1\left(\frac{y_1(x)+y_1(x)^{-1}}{2},\frac{y_2(x)+y_2(x)^{-1}}{2},\frac{y_3(x)+y_3(x)^{-1}}{2}\right)
&=\,	3\, \orb{1}(x), \\
	\sigma_2\left(\frac{y_1(x)+y_1(x)^{-1}}{2},\frac{y_2(x)+y_2(x)^{-1}}{2},\frac{y_3(x)+y_3(x)^{-1}}{2}\right)
&=\,	\frac{3( \orb{1}(x) +  \orb{2}(x))}{2} \\
\tbox{and}
	\sigma_3\left(\frac{y_1(x)+y_1(x)^{-1}}{2},\frac{y_2(x)+y_2(x)^{-1}}{2},\frac{y_3(x)+y_3(x)^{-1}}{2}\right)
&=\,	\frac{9\, \orb{1}(x)^2 - 3 \, \orb{1}(x) - 3 \, \orb{2}(x) - 1}{2}.
\end{align*}
\end{proposition}
\begin{proof}
In principle, this can be checked by hand, but we give a constructive proof to show how to obtain the equations. 
It follows from \Cref{eq_monomialsG} that $y_1(x)\,y_2(x)\,y_3(x) = 1$ and $x_1=y_1(x)$, $x_1^{-1}=y_2(x)\,y_3(x)$ and $x_2=y_1(x)^2\,y_3(x)$, $x_2^{-1}=y_2(x)^2\,y_3(x)$. 
Thus, after computing the orbits $\gva\cdot x_1$ and $\gva\cdot x_2$, we find polynomials $g_1,g_2\in\R[y_1,y_2,y_3]$ , such that $\orb{i}(x) = g_i(y(x))$. 
Consider the ideal
\[
	\mathcal{I}
:=	\langle \orb{1} - g_1(y) , \orb{2} - g_2(y) , 1 - y_1\,y_2\,y_3 \rangle
\subseteq \R[y_1,y_2,y_3,\orb{1},\orb{2}].
\]
Choose an elimination ordering $\{y_1, y_2, y_3\} \succeq \{\orb{1}, \orb{2}\}$ and let $G$ be a Gr\"obner basis of $\mathcal{I}$. 
Note that the expression on the left--hand side of the statement is $\gva$--invariant. 
We can write it as $\sigma_i(y_1+y_2\,y_3,y_2+y_1\,y_3,y_3+y_1\,y_2)/2^i$, that is, a polynomial in $\R[y_1,y_2,y_3,\orb{1},\orb{2}]$. 
Because of the $\gva$--invariance, the normal form of this polynomial with respect to $G$ only depends on $\orb{1}$ and $\orb{2}$. 
One obtains the right--hand side. 
\end{proof}

The next statement follows immediately from \Cref{eq_monomialsG}. 

\begin{lemma}\label{proposition_PsiG}
The map
\[
\Upsilon: \begin{array}[t]{ccl}
\TTorus^{2}&\to & \TToruss^3,\\
x &\mapsto & ( y_1(x), y_2(x), y_3(x) ) 
\end{array}
\]
is bijective.
\end{lemma}

\subsection{Hermite characterization with standard monomials}

With \Cref{OrbitsG2} and \Cref{proposition_PsiG}, the proof of the following is analogous to \Cref{HermiteCharacterizationCn}.

\begin{theorem}\label{HermiteCharacterizationGn}
Define the matrix $H\in\RX^{3\times 3}$ by
\begin{align*}
H(z)_{ij}	=	 \trace((C(z))^{i+j-2}-(C(z))^{i+j}),
\tbox{where}
C(z)		=
\begin{bmatrix}
0	&	0	&	(9\, z_1^2 - 3 \, z_1 - 3 \, z_2 -1)/2 \\
1	&	0	&	-3(z_1 + z_2)/2 \\
0	&	1	&	3\, z_1
\end{bmatrix}.
\end{align*}
Then $\Image=\Image_\R=\{ z\in \R^2 \,\vert\, H(z)\succeq 0 \}$.
\end{theorem}

\subsection*{Example}

Our result applies to the root system from \Cref{example_rootsystemG2} as follows. Let $z=(z_1,z_2) \in\R^2$. The Hermite matrix polynomial from \Cref{HermiteCharacterizationGn} is $4/3\,H(z)=$
\begin{tiny}
\begin{gather*}
	\begin{bmatrix}
	-12\,z_1^2 + 4\,z_1 + 4\,z_2 + 4 														& \cdots \\
	-36\,z_1^3 + 18\,z_1\,z_2 + 10\,z_1 + 6\,z_2 + 2										& \cdots \\
	-108\,z_1^4 + 72\,z_1^2\,z_2 + 30\,z_1^2 + 12\,z_1\,z_2 - 6\,z_2^2 + 4\,z_1 - 4\,z_2	& \cdots
	\end{bmatrix}\\
	\begin{bmatrix}
	\cdots & -36\,z_1^3 + 18\,z_1\,z_2 + 10\,z_1 + 6\,z_2 + 2																									& \cdots \\
	\cdots & -108\,z_1^4 + 72\,z_1^2\,z_2 + 30\,z_1^2 + 12\,z_1\,z_2 - 6\,z_2^2 + 4\,z_1 - 4\,z_2																& \cdots \\
	\cdots & -324\,z_1^5 + 270\,z_1^3\,z_2 + 126\,z_1^3 + 45\,z_1^2\,z_2 - 45\,z_1\,z_2^2 + 15\,z_1^2 - 48\,z_1\,z_2 - 15\,z_2^2 - 11\,z_1 - 11\,z_2 - 2	& \cdots 
	\end{bmatrix}\\
	\begin{bmatrix}
	\cdots & -108\,z_1^4 + 72\,z_1^2\,z_2 + 30\,z_1^2 + 12\,z_1\,z_2 - 6\,z_2^2 + 4\,z_1 - 4\,z_2 \\ 
	\cdots & -324\,z_1^5 + 270\,z_1^3\,z_2 + 126\,z_1^3 + 45\,z_1^2\,z_2 - 45\,z_1\,z_2^2 + 15\,z_1^2 - 48\,z_1\,z_2 - 15\,z_2^2 - 11\,z_1 - 11\,z_2 - 2 \\ 
	\cdots & -972\,z_1^6 + 972\,z_1^4\,z_2 + 432\,z_1^4 + 162\,z_1^3\,z_2 - 243\,z_1^2\,z_2^2 + 63\,z_1^3 - 207\,z_1^2\,z_2 - 81\,z_1\,z_2^2 + 9\,z_2^3 - 45\,z_1^2 - 66\,z_1\,z_2 - 3\,z_2^2 - 14\,z_1 - 6\,z_2 - 1
	\end{bmatrix}
\end{gather*}
\end{tiny}
and shall have characteristic polynomial
\[
	\det (x\,I_3 - H(z)) = x^3-\textcolor{OliveGreen}{h_1(z)}\,x^2+\textcolor{blue}{h_2(z)}\,x-\textcolor{red}{h_3(z)}.
\]
Again, we have $z\in \Image$ if and only if $h_i(z)\geq 0$ for $1\leq i\leq 3$ with
\[
\begin{array}{rcl}
	\textcolor{red}{h_3(z)}&\textcolor{red}{=}					&	\textcolor{red}{-\coeff(x^0,\det(x\,I_3-H(z)))} \\
	&\textcolor{red}{=}										&	\textcolor{red}{81/16\,(24\,z_1^3 - 12\,z_1\,z_2 - z_2^2 - 6\,z_1 - 4\,z_2 - 1)\,(3\,z_1^2 - 2\,z_2 - 1)^2\,(3\,z_1 + 1)^2}, \\
	\textcolor{blue}{h_2(z)}&\textcolor{blue}{=}				&	\phantom{-} \textcolor{blue}{\coeff(x^1,\det(x\,I_3-H(z)))} \\
	&\textcolor{blue}{=}									&	\textcolor{blue}{27/16\,(3\,z_1^2 - 2\,z_2 - 1)\,(648\,z_1^6 - 540\,z_1^5 - 972\,z_1^4\,z_2 - 216\,z_1^3\,z_2^2 - 648\,z_1^4 + 36\,z_1^3\,z_2}\\
	&														&	\textcolor{blue}{ + 369\,z_1^2\,z_2^2 + 126\,z_1\,z_2^3 + 9\,z_2^4 - 18\,z_1^3 + 480\,z_1^2\,z_2 + 246\,z_1\,z_2^2 + 36\,z_2^3 + 129\,z_1^2 + 202\,z_1\,z_2} \\
	&														&	\textcolor{blue}{ + 37\,z_2^2 + 44\,z_1 + 28\,z_2 + 4)}, \\
	\textcolor{OliveGreen}{h_1(z)}&\textcolor{OliveGreen}{=}	&	\textcolor{OliveGreen}{-\coeff(x^2,\det(x\,I_3-H(z)))} \\
	&\textcolor{OliveGreen}{=}								&	\textcolor{OliveGreen}{9/4 - 729\,z_1^6 + 243\,z_1^4 + 729\,z_1^4\,z_2 - 729/4\,z_1^2\,z_2^2 + 243/2\,z_1^3\,z_2 + 189/4\,z_1^3 - 405/4\,z_1^2\,z_2}\\
	&														&	\textcolor{OliveGreen}{ - 243/4\,z_1\,z_2^2 + 27/4\,z_2^3 - 81/4\,z_1^2 - 81/2\,z_1\,z_2 - 27/4\,z_2^2 - 9/2\,z_1 - 9/2\,z_2}.
\end{array}
\]
\begin{figure}[H]
	\begin{center}
		\begin{subfigure}{.4\textwidth}
			\centering
			\includegraphics[width=6cm]{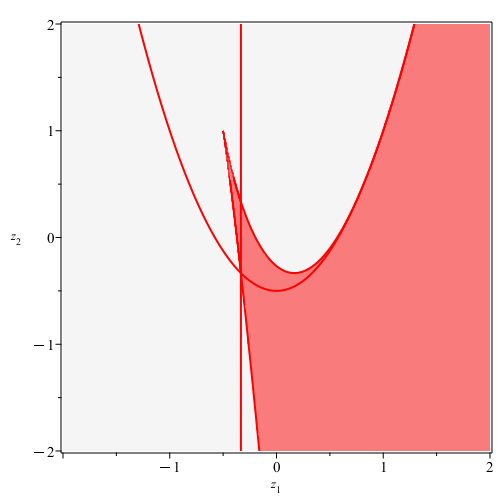}
			\caption{\textcolor{red}{$h_3(z)\geq 0$}}
		\end{subfigure}%
		\begin{subfigure}{.4\textwidth}
			\centering
			\includegraphics[width=6cm]{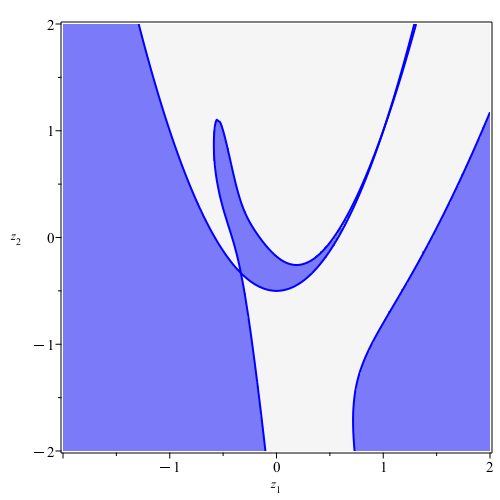}
			\caption{\textcolor{blue}{$h_2(z)\geq 0$}}
		\end{subfigure}
		\begin{subfigure}{.4\textwidth}
			\centering
			\includegraphics[width=6cm]{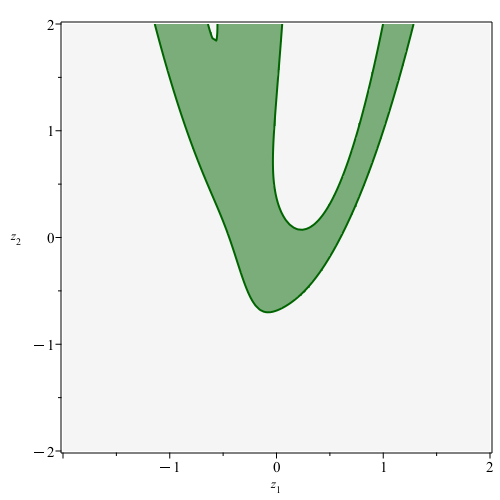}
			\caption{\textcolor{OliveGreen}{$h_1(z)\geq 0$}}
		\end{subfigure}
		\begin{subfigure}{.4\textwidth}
			\centering
			\includegraphics[width=6cm]{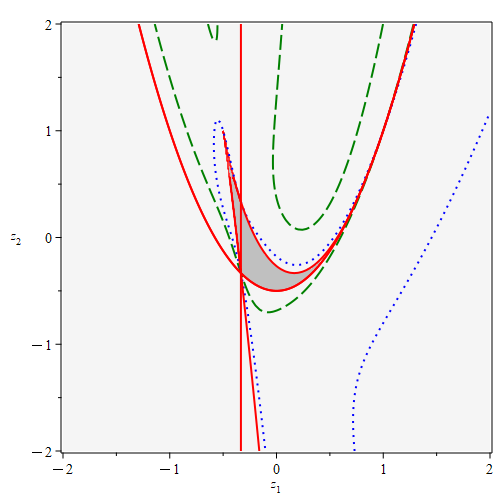}
			\caption{\textcolor{gray}{$h_1(z),h_2(z),h_3(z)\geq 0$}}
		\end{subfigure}
		\caption{The semi--algebraic sets defined by the coefficients of the characteristic polynomial of $H(z)$.}
		\label{example_figureG2}
	\end{center}
\end{figure}
The vertices are
\begin{align*}
&	\mathrm{Vertex}_1
:=	\bigcos(\mexp{\fweight{1},\fweight{1}/3},\mexp{\fweight{2},\fweight{1}/3})
=	\bigcos\left(-\frac{1}{2}+\mathrm{i}\frac{\sqrt{3}}{2},1\right)
=	\left(-\frac{1}{2},1\right), \\
&	\mathrm{Vertex}_2
:=	\bigcos(\mexp{\fweight{1},\fweight{2}/6},\mexp{\fweight{2},\fweight{2}/6})
=	\bigcos(-1,1)
=	\left(-\frac{1}{3},-\frac{1}{3}\right), \\
&	\mathrm{Vertex}_3
:=	\bigcos(\mexp{\fweight{1},0},\mexp{\fweight{2},0})
=	\bigcos(1,1)
=	(1,1).
\phantom{\dfrac{\sqrt{1}}{3}}
\end{align*}

\section{Hermite characterization of $\Image_\R$ with Chebyshev polynomials}
\label{section_proofMain}
\setcounter{equation}{0}

We shall finally prove \Cref{MainThmIntro} from the introduction, which characterizes the real $\TTorus$--orbit space $\Image_\R$ in terms of the real generalized Chebyshev polynomials $\TT_\alpha$ from \Cref{eq_RealPartCheby}. 

Let $n\in\N$ and $\Roots$ be a root system of type $\RootA$, $\RootB$, $\RootC$ ($n\geq 2$), $\RootD$ ($n\geq 4$) or $\RootG[2]$ as in \cite[Planche I -- IV, IX]{bourbaki456}. 
In particular, the rank $r:=\rank(\Roots)$ of $\Roots$ is $r=n-1$ for $\RootA$, $r=n-1=2$ for $\RootG[2]$ and $r=n$ otherwise. 
What all of these root systems have in common is that the symmetric group $\mathfrak{S}_n$ is a semi--direct factor of the Weyl group of $\Roots$, a fact that made the characterization via Hermite quadratic forms in the previous \Cref{section_proofA,section_proofC,section_proofB,section_proofD,section_proofG} possible. 

The Weyl group has an integer representation $\gva\subseteq\mathrm{GL}_{r}(\Z)$ as in \Cref{eq_integerrepresentation} with respect to the fundamental weights in \Cref{equation_WeightsRootsA,equation_WeightsRootsC,equation_WeightsRootsB,equation_WeightsRootsD,equation_WeightsRootsG}, respectively. 
We denote by $\RX=\Q[z_1,\ldots,z_{r}]$ the multivariate polynomial ring.
In particular, the size of the corresponding Hermite matrix is $n$ (determined by the symmetric group $\mathfrak{S}_n$), while the number of coordinates of the $\TTorus$--orbit space is $r$ (determined by the rank of $\Roots$).

\begin{theorem}\label{MainThm}
Let $H \in \RX^{n\times n}$ be the matrix polynomial with entries
\begin{align*}
	2^{i+j} \, H _{ij}
=&	- \TT_{(i+j)\,e_1} + \sum\limits_{\ell=1}^{\lceil (i+j)/2 \rceil -1} \left( 4 \binom{i+j-2}{\ell-1} - \binom{i+j}{\ell} \right) \TT_{(i+j-2\,\ell)\,e_1}	\\
&+	\frac{1}{2} \begin{cases}
	4\binom{i+j-2}{(i+j)/2-1}-\binom{i+j}{(i+j)/2}	,&	\tbox{if} i+j \tbox{is even}	\\
	0												,&	\tbox{if} i+j \tbox{is odd}
	\end{cases}.
\end{align*}
Then the real $\TTorus$--orbit space of $\gva$ is $\Image_\R = \{z\in\R^{r}\,\vert\,H(z)\succeq 0\}$.
\end{theorem}

We need two lemmata involving the orbit polynomials $\bigorb{\alpha}=\frac{1}{\nops{\gva}}\sum\limits_{B\in\gva} x^{B\alpha}$ for $\alpha\in\Z^r$ from \Cref{defi_orbitpoly}.

\begin{lemma}\label{lemma_OrbitMain}
In $\Rx$, define the $2\,n$ distinct monomials $\{y_1^{\pm 1},\ldots,y_n^{\pm 1}\}:=\gva\cdot x_1 \cup \gva\cdot x_1^{-1}$. Then, for $x\in \TTorus^{r}$ and $k\in\N$, we have $(y_i(x)^k + y_i(x)^{-k})/2\in[-1,1]$ and
\[
\frac{1}{n} \sum\limits_{i=1}^n y_i(x)^k + y_i(x)^{-k}
=	\bigorb{k\,e_{1}} (x) + \bigorb{-k\,e_{1}} (x) .
\]
\end{lemma}
\begin{proof}
This follows immediately from \Cref{OrbitSumsAn,OrbitSumsCn,OrbitSumsBn,OrbitSumsDn,OrbitsG2}.
\end{proof}

\begin{lemma}\label{lemma_OrbitMain2}
For $x\in\TTorus^{r}$ and $y_1,\ldots,y_n$ the monomials from \Cref{lemma_OrbitMain}, let $C\in\R^{n\times n}$ be a matrix with eigenvalues $(y_i(x)+y_i(x)^{-1})/2$. Then, for $k\in \N$, the trace of the $k$--th power of $C$ is
\[
\trace(C^k)
=	\dfrac{n}{2^k}	\sum\limits_{\ell=0}^{\lceil k/2 \rceil -1} 
\left( \binom{k}{\ell}\,(\bigorb{(k-2\,\ell)\,e_{1}} (x) + \bigorb{(2\,\ell-k)\,e_{1}} (x)) \right)
+	\dfrac{n}{2^k} 
\begin{cases}
	\binom{k}{k/2}	,&	k \tbox{even}	\\
	0				,&	k \tbox{odd}
\end{cases}.
\]
\end{lemma}
\begin{proof}
The trace of the $k$--th power of $C$ is the sum of the $k$--th powers of the eigenvalues. Thus,
\begin{align*}
	\trace(C^k)
	=&	\sum\limits_{i=1}^n	\left(\dfrac{y_i(x)+y_i(x)^{-1}}{2}\right)^k
	=	\,\dfrac{1}{2^k}\sum\limits_{i=1}^n	\left( \sum\limits_{\ell=0}^k \binom{k}{\ell}\,y_i(x)^{k-\ell}\,y_i(x)^{-\ell} \right)	\\
	=&	\,\dfrac{1}{2^k}\sum\limits_{i=1}^n	\left( \sum\limits_{\ell=0}^{\lceil k/2 \rceil -1} \binom{k}{\ell}\,y_i(x)^{k-2\,\ell}
	+	\sum\limits_{\ell=\lfloor k/2 \rfloor + 1}^k \binom{k}{\ell}\,y_i(x)^{k-2\,\ell}
	+	\begin{cases}
		\binom{k}{k/2}	,&	k \tbox{even}	\\
		0				,&	k \tbox{odd}
	\end{cases} \right)	\\
	=&	\,\dfrac{1}{2^k}\sum\limits_{\ell=0}^{\lceil k/2 \rceil -1} \left( \binom{k}{\ell}\,\sum\limits_{i=1}^n	\left( y_i(x)^{k-2\,\ell}+y_i(x)^{2\,\ell-k} \right) \right)
	+	\dfrac{n}{2^k} \begin{cases}
		\binom{k}{k/2}	,&	k \tbox{even}	\\
		0				,&	k \tbox{odd}
	\end{cases}	.
\end{align*}
Applying \Cref{lemma_OrbitMain} yields the formula.
\end{proof}

\begin{proof}[Proof of \emph{\Cref{MainThm}}]
The $\TTorus$--orbit space of $\gva$ is by definition $\Image = \{\bigcos(x)\,\vert\,x\in\TTorus^{r}\}$. According to \Cref{HermiteCharacterizationAn,HermiteCharacterizationCn,HermiteCharacterizationBn,HermiteCharacterizationDn,HermiteCharacterizationGn}, we have $\Image = \{\tilde{z}\in \mathcal{Z} \,\vert\, H(\tilde{z})\succeq 0\}$, where $\mathcal{Z}$ is an $\R$--vector space with $\dim(\mathcal{Z})=r$ and $H\in\RX^{n\times n}$ has entries $H_{ij} = \trace(C^{i+j-2})-\trace(C^{i+j})$ for a certain matrix $C\in\RX^{n\times n}$. 

For $x\in\TTorus^{r}$ and $\tilde{z} := \bigcos(x) \in \Image \subseteq \mathcal{Z}$, it is now essential to note that $C(\tilde{z})\in\R^{n\times n}$ has eigenvalues $(y_i(x)+y_i(x)^{-1})/2\in [-1,1]$ with $y_i$ as in \Cref{lemma_OrbitMain}. This follows from the construction, by which $C$ is the matrix of a multiplication map, see \Cref{subsec_SturmSylvester}. In particular, we obtain a formula for the entries $H(\tilde{z})_{ij}$ with \Cref{lemma_OrbitMain2}. In the basis of (real) generalized Chebyshev polynomials from \Cref{defi_ChebyPoly1} and \Cref{eq_RealPartCheby}, we have
\[
\bigorb{(k-2\,\ell)\,e_{1}} (x) + \bigorb{(2\,\ell-k)\,e_{1}} (x)
=	T_{(k-2\,\ell)\,e_{1}}(\bigcos(x)) + T_{(2\,\ell-k)\,e_{1}}(\bigcos(x))
=	2\,\TT_{(k-2\,\ell)\,e_{1}}(\bigcos_\R(x)).
\]
We now substitute $\tilde{z} \mapsto z:=\bigcos_\R(x) \in\Image_\R \subseteq \R^n$. For $1\leq i,j\leq n$ and $k:=i+j$, the entry $H(z)_{ij}$ is
\begin{align*}
 &	\, \trace(C(\tilde{z})^{k-2})-\trace(C(\tilde{z})^k)	\\
=&	\,\dfrac{2\,n}{2^{k}} \left( 4 \sum\limits_{\ell=0}^{\lceil k/2 \rceil -2} \left( \binom{k-2}{\ell}\,\TT_{(k-2\,(\ell+1))\,e_1}(z) \right) - \sum\limits_{\ell=0}^{\lceil k/2 \rceil -1} \left( \binom{k}{\ell}\,\TT_{(k-2\,\ell)\,e_1}(z) \right) \right)	\\
 &	+ \dfrac{n}{2^k}	\begin{cases}
						4\binom{k-2}{k/2-1}-\binom{k}{k/2}	,&	k \tbox{even}	\\
						0									,&	k \tbox{odd}
						\end{cases}		\\
=&	\,\dfrac{2\,n}{2^k} \left( 4 \sum\limits_{\ell=1}^{\lceil k/2 \rceil -1} \left( \binom{k-2}{\ell-1}\,\TT_{(k-2\,\ell)\,e_1}(z) \right) - \sum\limits_{\ell=0}^{\lceil k/2 \rceil -1} \left( \binom{k}{\ell}\,\TT_{(k-2\,\ell)\,e_1}(z) \right) \right)	\\
 &	+ \dfrac{n}{2^k} 	\begin{cases}
						4\binom{k-2}{k/2-1}-\binom{k}{k/2}	,&	k \tbox{even}	\\
						0									,&	k \tbox{odd}
						\end{cases}		\\
=&	\,\dfrac{2\,n}{2^k} \left( - \TT_{k\,e_1}(z) + \sum\limits_{\ell=1}^{\lceil k/2 \rceil -1} \left( 4 \binom{k-2}{\ell-1} - \binom{k}{\ell} \right) \TT_{(k-2\,\ell)\,e_1}(z) \right)	
	+ \dfrac{n}{2^k}	\begin{cases}
						4\binom{k-2}{k/2-1}-\binom{k}{k/2}	,&	k \tbox{even}	\\
						0									,&	k \tbox{odd}
						\end{cases}.
\end{align*}
Dividing by $2\,n$ does not change whether $H$ is positive semi--definite in $z$ and so we obtain the formula.
\end{proof}

\begin{remark}\label{remark_mainthm}
The matrix polynomial $H\in\RX^{n\times n}$ from \Cref{MainThm} follows the pattern
\[
\begin{bmatrix}
	\frac{\TT_{0}-\TT_{2\,e_{1}}}{4}& 
	\frac{\TT_{e_{1}} -\TT_{3\,e_{1}}}{8}& 
	\frac{\TT_{0}- \TT_{4\,e_{1}}}{16}&
	\frac{2\TT_{e_{1}}- \TT_{3\,e_{1}} - \TT_{5\,e_{1}}}{32}&
	\cdots\\
	
	\frac{\TT_{e_{1}} -\TT_{3\,e_{1}}}{8}& 
	\frac{\TT_{0}- \TT_{4\,e_{1}}}{16}&
	\frac{2 \TT_{e_{1}}- \TT_{3\,e_{1}} - \TT_{5\,e_{1}}}{32}&
	\frac{2 \TT_{0} +  \TT_{2\,e_{1}}-2 \TT_{4\,e_{1}} -  \TT_{6\,e_{1}}}{64}&
	\cdots\\
	
	\frac{\TT_{0}- \TT_{4\,e_{1}}}{16}& 
	\frac{2\TT_{e_{1}}- \TT_{3\,e_{1}} - \TT_{5\,e_{1}}}{32}& 
	\frac{2 \TT_{0} +  \TT_{2\,e_{1}}-2 \TT_{4\,e_{1}} -  \TT_{6\,e_{1}}}{64}&
	\frac{5 \TT_{e_1} - \TT_{3\,e_1} - 3 \TT_{5\,e_1} - \TT_{7\,e_1}}{128}&
	\cdots\\
	
	\frac{2\TT_{e_{1}}- \TT_{3\,e_{1}} - \TT_{5\,e_{1}}}{32}&
	\frac{2 \TT_{0} +  \TT_{2\,e_{1}}-2 \TT_{4\,e_{1}} -  \TT_{6\,e_{1}}}{64}&
	\frac{5 \TT_{e_1} - \TT_{3\,e_1} - 3 \TT_{5\,e_1} - \TT_{7\,e_1}}{128}&
	\frac{5 \TT_{0} + 4 \TT_{2\,e_1} - 4 \TT_{4\,e_1} - 4 \TT_{6\,e_1} - \TT_{8\,e_1}}{256}&
	\cdots\\
	
	\vdots & \vdots & \vdots & \vdots & \ddots
\end{bmatrix}.
\]
\begin{enumerate}
\item $H$ only depends on the $\TT_\alpha$, where $\alpha\in\N^r$ is a multiple of $e_{1}$. 
An entry of $H$ is a linear combination of at most $n+1$ such polynomials. 
The largest possible degree of an entry is $2\,n = \deg(\TT_{2\,n\,e_1})$. 
\item $H$ has Hankel structure, that is, $H_{ij}=H_{k\ell}$ whenever $i+j=k+\ell$. 
Therefore, the number of distinct entries is at most  $2\,n-1$. 
\item Denote by $H^{(n)}$ the matrix $H$ for fixed $n$. Then the leading principal submatrices of $H$ have the structure of $H^{(k)}$ for $k\leq n$.
\item If $\Roots$ is of type $\RootB$, $\RootC$ or $\RootG[2]$, then $\Image_\R=\Image$ and $\TT_{k\,e_i}(z)=T_{k\,e_i}(z)$ is the usual generalized Chebyshev polynomial of the first kind from \Cref{defi_ChebyPoly1}. The same holds for $\RootD$ when $n$ is even. When $n$ is odd on the other hand, then $T_{k\,e_i}(z) = \TT_{k\,e_i}(\tilde{z})$ with $\frac{z_{n-1}+z_n}{2}=\tilde{z}_{n-1}$ and $\frac{z_{n-1}-z_n}{2\mathrm{i}}=\tilde{z}_{n}$. Finally, for $\RootA$, we have $\frac{T_{k\,e_i}(z)+T_{-k\,e_i}(z)}{2} = \frac{T_{k\,e_i}(z)+T_{k\,e_{n-i}}(z)}{2} = \TT_{k\,e_i}(\tilde{z})$ with $\frac{z_{n-i}+z_i}{2}=\tilde{z}_{i}$ and $\frac{z_{n-i}-z_i}{2\mathrm{i}}=\tilde{z}_{n-i}$.
\end{enumerate} 
\end{remark}




\section*{Conclusion and future work}

We have characterized the $\TTorus$--orbit space of a Weyl group acting nonlinearly on the compact torus as a compact basic semi--algebraic set. 
The characterization is given through a polynomial matrix inequality and holds for the Weyl groups of type $\RootA$, $\RootB$, $\RootC$, $\RootD$ and $\RootG[2]$. 
It is the first explicit characterization for nonlinear actions in the sense that we have given a closed formula for the matrix. 
By expressing the entries in the basis of generalized Chebyshev polynomials, we have shown that these matrices have a common structure. 

For applications in graph theory \cite{BdCOV} and optimization \cite{chromatic22}, it is desirable to give such a closed formula also for the remaining cases $\RootE[6,7,8]$ and $\RootF[4]$. 
We will address those in a future work.




\section*{Acknowledgments}

The authors wish to acknowledge the insights that Claudio Procesi (Rome) shared and are grateful for discussions with Philippe Moustrou (Toulouse) and Robin Schabert (Troms\o).


The majority of the work of Tobias Metzlaff was carried out during his doctoral studies \cite{TobiasThesis} at Inria d'Universit\'{e} C\^{o}te d'Azur, supported by European Union’s Horizon 2020 research and innovation programme under the Marie Sk\l odowska-Curie Actions, grant agreement 813211 (POEMA).
Minor changes were applied during his postdoctoral research at RPTU Kaiserslautern--Landau, supported by the Deutsche Forschungsgemeinschaft transregional collaborative research centre (SFB--TRR) 195 ``Symbolic Tools in Mathematics and their Application''.




\bibliographystyle{alpha}
{\bibliography{mybib.bib}}

\end{document}